\newtheorem{thm}{Theorem}
\newtheorem{prop}{Proposition}
\newtheorem{remark}{Remark}
\newtheorem{lem}[thm]{Lemma}
\newtheorem{ass}{Assumption}
\newtheorem*{notation}{Notation}
\renewenvironment{proof}[1][Proof]{%
    \par\noindent{\bfseries #1. }}{\qed}
\newlist{condlist}{enumerate}{1}
\setlist[condlist, 1]{label=(\Alph*)}
\renewcommand{\div}{\operatorname{div}}
\newcommand{\tr}{\text{Tr}}
\newcommand{\R}{\mathbb{R}}
\newcommand{\N}{\mathbb{N}}
\newcommand{\Z}{\mathbb{Z}}
\newcommand{\T}{\mathbb{T}}
\newcommand{\HH}{\mathbb{H}}
\newcommand{\E}{\mathbb{E}}
\newcommand{\lie}{\text{£}}
\newcommand{\LL}{\mathbb{L}}
\newcommand{\prob}{\mathbb{P}}
\newcommand{\curl}{\text{curl}}
\newcommand{\W}{\mathbb{W}}
\newcommand{\norm}[1]{\left\lVert #1 \right\rVert}
\newcommand{\indicator}{\mathbb{I}}
 \title{A uniform particle approximation to the Navier-Stokes-alpha models in three dimensions with advection noise}
\author{Filippo Giovagnini\footnote{Department of Mathematics, Imperial College London}, Dan Crisan\footnote{Department of Mathematics, Imperial College London}}
\begin{document}
\maketitle

\begin{abstract}
In this work, we investigate the properties of a system of interacting particles governed by a set of stochastic differential equations. The goal of the investigation  is to prove that the empirical measure associated with the system of particles converges uniformly—both in time and space—to the solution of the \textbf{three-dimensional Navier--Stokes-alpha model with advection noise}. This convergence provides a probabilistic framework for deriving macroscopic stochastic fluid equations from underlying microscopic dynamics. The analysis meakes use of semigroup techniques to address the nonlinear structure of the limiting equations. We provide a detailed treatment of the well-posedness of the limiting stochastic partial differential equation which helps to ensure that the particle approximation remains stable and controlled over time. Although similar convergence results have been obtained in two-dimensional settings, our study presents the first proof of strong uniform convergence for a stochastic fluid model in three dimensions  derived from an interacting particle system. We note that our results also cover the deterministic regime (with the advection noise being absent)—where this type of convergence had not been previously established either.
\end{abstract}

\section{Introduction}
\label{sec:intro}

The Navier–Stokes–$\alpha$ (NS-$\alpha$) model, also known as the Lagrangian-averaged Navier–Stokes model, is a regularized formulation of the classical Navier–Stokes equations designed to model the macroscopic behaviour of turbulent fluid flows while attenuating small-scale fluctuations. This is achieved through the introduction of a characteristic length scale parameter $\alpha$, which induces a smoothing of the velocity field. The resulting system exhibits improved numerical stability and reduced computational complexity, making it well-suited for large-scale simulations. Originally developed in the context of turbulence modelling and geophysical applications, the NS–$\alpha$ model retains essential physical features of the underlying fluid system, including key conservation laws and an appropriate energy balance. It thus provides a mathematically tractable yet physically consistent framework for the study of complex flow phenomena.


The NS--$\alpha$ model, originally introduced in~\cite{foias}, provides a regularized framework for fluid dynamics by introducing a mechanism that penalizes the formation of small-scale structures beneath a characteristic length scale $\alpha$. This penalization results in a nonlinearly dispersive modification to the classical Navier--Stokes equations. In contrast to viscous dissipation, which removes energy from the system, the $\alpha$-modification alters the nonlinear advection term in a scale-dependent manner, effectively inhibiting the transfer of energy to finer spatial scales. The model is also commonly referred to as the viscous Camassa--Holm equations, as discussed in~\cite{MR1745983, MR1719962, MR1721139, MR1878243}.

In ~\cite{foias}, the authors derive the NS--$\alpha$ equations via a Lagrangian-averaged interpretation of Kelvin’s circulation theorem, illustrating how the $\alpha$-modification naturally emerges in this context. A key analytical result is presented in Section 3: in three dimensions, the translational kinetic energy spectrum of the NS--$\alpha$ model exhibits a steeper decay of $k^{-3}$ beyond the cutoff wavenumber associated with $\alpha$, in contrast to the classical Kolmogorov $k^{-5/3}$ spectrum first introduced in \cite{kolmogorov}. This enhanced decay reduces the extent of the inertial range, thereby improving the model’s computational efficiency while retaining key features of turbulent flow dynamics.

Now, the so-called \emph{point vortex method} is a classical and computationally efficient technique used to model two-dimensional incompressible, inviscid fluid flows, particularly in the context of vortex-dominated dynamics. In this approach, the vorticity field is discretized into a finite set of idealized, singular vortices, each represented as a point carrying a prescribed circulation. The evolution of the fluid is then governed by the mutual interactions of these point vortices, as dictated by the Biot–Savart law. Originating from Helmholtz’s and Kirchhoff’s foundational work in the 19th century, the method captures key features of vortex dynamics, such as advection and vortex pairing, and has been widely employed in theoretical and numerical studies of vortex sheets, wake flows, and turbulent structures. 

The theoretical foundations of the point vortex method trace back to the pioneering work of Hermann von Helmholtz and Gustav Kirchhoff in the 19th century. In \cite{helmholtz1858} the author formulated the fundamental laws governing vortex motion in ideal fluids, introducing the concept of vortex lines and demonstrating their conservation under inviscid, incompressible flow. Building on this, in \cite{kirchhoff1876} the author provided a mathematical framework for the dynamics of point vortices in two dimensions, showing that their motion can be derived from a Hamiltonian formulation. This elegant reduction of fluid dynamics to a finite-dimensional system laid the groundwork for the modern point vortex method and remains central to the analytical study of vortex dynamics.

The origins of the {\it modern} point-vortex method can be traced to the work of~\cite{rosenhead}, while the development of computational techniques that make use vortex dynamics for simulating viscous fluid flows began with the seminal contributions of~\cite{chorin_main} (see also~\cite{Chorin1994VorticityAT, chorin1973discretization}). Chorin's work introduced several fundamental ideas that helped establish point-vortex approximations as practical tools in numerical fluid dynamics. Although these approximations do not exactly solve the governing equations of fluid motion, and their convergence to true solutions remains analytically subtle, significant progress has been achieved in proving convergence for a variety of vortex-based numerical schemes applied to the Navier–Stokes (NS) and to other related fluid equations.

The convergence of the random vortex method for two-dimensional incompressible viscous fluid flows was studied in \cite{goodman1987convergence}, with improvements to the rate of convergence made in \cite{Long1988}. Both studies used the distinctive property of two-dimensional vortex motions: the transport of vortices without nonlinear stretching.

The study of three-dimensional inviscid fluid flows based on vortex dynamics has been a prominent area of research. Reviews of progress in this field can be found in \cite{madja_bertozzi} and \cite{cottet_koumoutsakos}. The first three-dimensional vortex methods for inviscid fluid flows were introduced by \cite{anderson_greengard}, followed by a proof of their convergence. The convergence of three-dimensional vortex methods in a Lagrangian framework was established in \cite{beale_madja_I} and \cite{beale_majda_II}, while a new grid-free vortex method was later proposed in \cite{beale_grid_free}. In \cite{cottet_goodman_hou}, a vortex method without mollifying the singular Biot-Savart kernel was introduced. For viscous flows, numerical methods based on vortex dynamics for solving fluid dynamics equations were studied in works such as \cite{knio_ghoniem}. In \cite{qian}, the authors derive exact random vortex dynamics equations for the three-dimensional incompressible NS equations, which form the basis of this work.

Several other stochastic representations of solutions to the three-dimensional Navier-Stokes equations exist in the literature. For example, in \cite{constantinStochasticLagrangianRepresentation2008}, a stochastic Lagrangian representation for the velocity of an incompressible fluid flow is obtained using stochastic flows and Leray’s projection. The existence and uniqueness of solutions to the Navier-Stokes equation via stochastic flow methods were studied in \cite{zhang}, while the connection between mild solutions and stochastic representations was explored in \cite{olivera_representation}. Alternative approaches proposed in \cite{brusnello_flandoli_romito} and \cite{albeverio_belopolskaya} led to McKean-Vlasov-type stochastic differential equations (SDEs). A propagation-of-chaos-type result for the three-dimensional Navier-Stokes equations was conjectured in \cite{esposito_pulvirenti} and later proven in \cite{fontbona}.

In this work, we study a smooth, mollified version of the empirical measure associated with a particle system and establish strong uniform convergence to the three-dimensional Navier--Stokes--$\alpha$ model with advection noise (see~\cite{paper} and~\cite{giovagnini2024uniformpointvortexapproximation} for the two-dimensional case). In the absence of stochastic perturbations, our result recovers the corresponding convergence for the deterministic three-dimensional Navier--Stokes--$\alpha$ model. To the best of our knowledge, this type of strong uniform mean-field convergence has not been previously established for {\it any} three-dimensional fluid equations.

The paper is structured as follows. In Section \ref{sec:particle_system} we provide a rigorous definition of the particle system and state the main result of the paper. In Section \ref{sec:compactness} we establish the compactness of the laws associated with the sequence of approximating particle systems. In Section \ref{sec:passing-to-the-limit} we prove that the limit of the particle system satisfies the three-dimensional stochastic Navier-Stokes equation. Section \ref{sec:well-posed-limit} addresses the well-posedness of the limiting equation. Finally, Section \ref{sec:conclusions} contains the convergence in probability of the original sequence, ensuring that the result holds without the need for subsequence arguments.

We conclude the introduction with a precise formulation of the stochastic NS--$\alpha$ model, along with a list of standard notation that will be used throughout the paper. In particular, we consider the following stochastic partial differential equation:  

\begin{equation}
\label{eq:NS_alpha_model}
\begin{cases}
d\omega(t, x)+ \lie_{u} \omega(t, x) dt + \sum\limits_{k \in \N} \lie_{\sigma_k} \omega(t, x) \circ dW^k_t= \nu \Delta \omega(t, x), \quad x \in \T^3,\\
u = (\indicator - \alpha^2 \Delta)^{-1} K \star \omega, \\
\omega(0, \cdot) = \omega_0.
\end{cases}
\end{equation}
Here, $\T^3:=\R^3/\Z^3$ is the $3$-dimensional torus,  $u: \T^3 \to \R^3$ denotes the velocity field of a fluid, $\omega := \curl(u)$ is its vorticity, and $K$ is the Biot-Savart kernel in the $3$-torus (see equation \eqref{eq:biot_savart} in the appendix for an explicit definition). In equation \eqref{eq:NS_alpha_model}, $\lie_{\sigma} \omega$ is the Lie derivative of $\omega$ along the vector field $\sigma$ as:
$
\lie_{\sigma} \omega := \left(\nabla \omega \right) \sigma - \left(\nabla \sigma \right)^T \omega,
$
and $W^k$ is a sequence of independent $\R^3$-valued Brownian Motion defined in a filtered probability space $\left( \Omega, \mathcal{F}, \mathbb{P},\left( \mathcal{F}_t \right)_{t \geq 0} \right)$. In this paper we will work with solutions to equation \eqref{eq:NS_alpha_model} with values in the space $\LL^2\left([0, T]; \HH^{-1}_2(\T^3) \right)$. See below for the definition oof the Sobolev space  $\HH^{-1}_2(\T^3)$.

In this paper, we introduce a representation
of the fluid vorticity which is the solution of equation
(\ref{eq:NS_alpha_model}, as the limit a weighted empirical distribution of system of "point" vorticities. In other words,  we deduce and rigorously justify the so-called point vortex method for the stochastic Navier--Stokes--$\alpha$ (NS--$\alpha$) model. 

We remark that in two-dimensional models (see, e.g., \cite{marchioro_pulvirenti}), the point vortex method represents the vorticity as a collection of vortices with equal scalar weights, reflecting the scalar nature of vorticity in 2D flows. This simplification, however, does not extend to the three-dimensional setting. In 3D, vorticity is a vector-valued field rather than a scalar, and as a consequence, point vortices must carry vector-valued weights.     

\begin{notation}
$\left.\right.$\\[-5mm]
\begin{itemize}
    \item $\text{Id} \in \R^{3 \times 3}$ denotes the identity matrix.
    \item $\wedge : \R^3 \times \R^3 \to \R^3$ is the usual cross product in $\R^3$.
    \item If $f, g:\T^3 \to \R^3$, we define $f \star g : \T^3 \to \R^3$ as follows:
    \[
    f \star g (x) := \int_{\T^3} f(x-y) \wedge g(y) dy, \quad x \in \T^3.
    \]
    \item If $p \geq 1$, $\LL^p(\T^3; \R^3)$ denotes the usual $\LL^p$ space on the torus.
    \item For $p \geq 1$ and $\alpha \in \R$:
    \[
    \HH^{\alpha}_p(\T^3):= \left\{ f \in \LL^p(\T^3) \bigg| \sum\limits_{k \in \Z^3 } \left(1 + |k|^2 \right)^{\alpha/2} c_k(f) \textbf{e}_k(x) \in \LL^p(\T^3)\right\},
    \]
    where $\textbf{e}_k: \T^3 \to \mathbb{C}$ is defined as:
    \[
    \textbf{e}_k(x) := \frac{1}{(2 \cdot \pi)^{\frac{3}{2}}} e^{i <k, x>}.
    \]
    and where $c_k(f)$ is the $k$-th Fourier coefficient of $f$. For a connection with the usual definition with the classical Sobolev spaces, see \cite{hitchhiker}.
    \item For a $C^2$ function $f:\T^3 \to \R$, $Hf(x)$ denotes the Hessian matrix of $f$.
    \item $\Delta : \HH^{2}_p(\T^3) \subset \LL^p(\T^3) \to \LL^p(\T^3)$ is the usual Laplacian.
    \item $\indicator : \LL^p(\T^3) \to \LL^p(\T^3)$ is the identity operator.
    \item $(\indicator - \alpha^2 \Delta)^{-1} :  \LL^p(\T^3) \to \HH^{2}_p(\T^3)$ is the inverse of the differential operator $\indicator - \alpha^2 \Delta$.
    \item For $-\alpha > 0$ the following characterization holds:
    \[
    \HH^{-\alpha}_p(\T^3) = \left( \HH^{\alpha}_p(\T^3) \right)^*
    \]
    where $\left( \cdot \right)^*$ denotes the topological dual of a Banach space.
    \item $\HH^{\alpha}_p(\T^3; \R^3)$ is defined in the same way as $\HH^{\alpha}_p(\T^3)$, but component wise.
    \item $C([0, T]; \HH^{\alpha}_p(\T^3; \R^3))$ is the space of functions $f:[0, T] \to \HH^{\alpha}_p(\T^3; \R^3) $ that are continuous on $[0,T]$.
    \item For two functions $f,g:D \to \mathbb{\R^n}$ we will denote:
    \[
    \left\langle f,g \right\rangle:= \int_D f(x) \cdot g(x) dx,
    \]
    where $D=\T^3$ or $D=\R^3$.
\end{itemize}
\end{notation}

\section{The particle system}
\label{sec:particle_system}

In the following, we will choose constants $p,\alpha,$ and $\beta$ such that $p>6$,  $6/p < \alpha < 1$, $\eta \in (6/p, \alpha)$
and $0 < \beta < \frac{1}{3 + \alpha - 6/p}$. 
\footnote{We choose these constants to ensure 
the compact embeddings:
$\HH^{\alpha}_p(\T^3) \subset \HH^{\eta}_p(\T^3) \subset \HH^{-2}_2(\T^3)$.
See \cite{hitchhiker} for the proof of this result. The constrain on the constant $\beta$ is to ensure the convergence of the mollified empirical distribution of the vortex particles.}
Let $V:\mathbb{R}\rightarrow  \mathbb{R}$ be a smooth function which we use to construct a sequence of mollifiers $V^N:\mathbb{R}\rightarrow  \mathbb{R}$ such that $V^N(x):=N^{3\beta}V(N^{\beta}x)$. An example of a funcion $V$ is the following:
\begin{equation}
    V(x):=\begin{cases}
        c \cdot e^{\frac{1}{\pi^2-4|x|^2}} \text{ if } |x| \leq \frac{\pi}{2}, \\
        0 \text{ otherwise },
    \end{cases}
\end{equation}
where the constant $c$ is such that $\int_{\T^2} V(y)dy = 1$.
The following has been inspired by the work of \cite{fontbona}, and by the representation formula in section \ref{repform}. 
We will consider a system of particles whose trajectories are modelled by a set of stochastic progresses $\{ X^{i, N} \}_{i=1, \dots, N}$. Associated to each particle we consider a set of matrix-valued progresses $\{\Phi^{i, N}\}_{i=1, \dots, N}$. The progresses $\{X^{i, N}, \Phi^{i, N}\}_{i=1, \dots, N}$ will satisfy the following system of coupled stochastic differential equations:
\begin{equation}
\label{eq:particle_system}
\begin{cases}
\begin{aligned}
dX_t^{i, N} &= \frac{1}{N} \sum_{j \neq i} V^N \star K_{\alpha}\left(X_{t}^{i, N}-X_{t}^{j, N}\right) \wedge \Phi_{t, 0}^{j, N} h_{0}\left(X_{0}^{j}\right) d t + \sqrt{2 v} dB_t^i \\
& + \sigma \left(X_{t}^{i, N}\right) \circ dW_t,\\
X_0^{i, N} & = X_0^i,\\
d\Phi_{t, s}^{i, N} & = -\frac{1}{N} \sum_{j \neq i} \left[\nabla V^N \star K_{\alpha}\left(X_s^{i, N}-X_s^{j, N}\right) \wedge \Phi_{t,s}^{j, N}h_0\left(X_0^j\right)\right] \Phi_{t,s}^{i, N} ds \\
& - \nabla \sigma \left(X_{s}^{i, N}\right) \Phi^{i, N}_{t,s} \circ dW_s, \\
\Phi^{i, N}_{t,t} &= \text{Id}.
\end{aligned}
\end{cases}
\end{equation}
In equation \eqref{eq:particle_system}, $X_t^{i, N}$ is a $\T^3$-valued stochastic process, $\Phi_{t,s}^{i, N}$ is a $\R^3 \times \R^3$-valued stochastic process. The choice of this system is inspired by \cite{qian}. From now on, will write $\Phi^{i, N}_{t}$ for indicating $\Phi^{i, N}_{t,0}$. We will impose the following:
\begin{ass}
$\left.\right.$\\[-5mm]
\label{ass:assprinci}
\begin{enumerate}
    \item We assume $\sigma:\T^3 \to \R^3$ is such that:
    \[
    \div\left(\sigma \sigma^T + (\nabla \sigma) (\nabla \sigma)^T\right) = 0.
    \]
    A concrete example of an advection noise that satisfies this condition has been introduced by \cite{kazantsev}, and it has been studied in \cite{kraichnan_kazantsev}, just to mention one of many works in this direction.
    \item There exists a function $h_0:\T^3 \to \T^3$ such that the initial condition of the system  \eqref{eq:particle_system}, i.e. the family 
    $(X_0^i)_{i \in \N} \in \T^3$ satisfies:
    \[\lim_{N\rightarrow \infty}
    \frac{1}{N} \sum\limits_{i = 1}^N V^N(\cdot-X_0^i) h_0(X_0^i) =\omega_0(\cdot),
    \]
    in $\HH^{-1}_p(\T^3)$, in probability.
    \item There exist $p$ and $\alpha$ as above such that for all $q > 0$:
    \[
    \sup\limits_{N \in \N} \E \left[ \left\|\frac{1}{N} \sum\limits_{i = 1}^N V^N(\cdot-X_0^i) h_0(X_0^i)  \right\|_{\HH^{\alpha}_p}^q\right] < \infty.
    \]
\end{enumerate}
\end{ass}

Following from equation (\ref{eq:NS_alpha_model}), let us define the following mollified Biot-Savart operator:
$$K_{\alpha}(\omega) := (\indicator - \alpha^2 \Delta)^{-1} K \star \omega.$$ See equation \eqref{eq:biot_savart} for a more explicit definition.

Define, for a function \(f \in C^2 \left( \mathbb{T}^{3}; \mathbb{R} \right) \), the
following:
\begin{equation}
\label{eq:action_mu_test}
\left\langle \mu^{N}_{t}, f \right\rangle:= \frac{1}{N}\sum\limits_{i = 1}^{N}f(X_t^{i, N})\Phi_t^{i, N} h_{0}(X_{0}^i).
\end{equation}

Applying It\^o's
formula to the product
\(f(X_{t}^{i, N}) \cdot \Phi_{t}^{i, N}h_0(X_{0}^{i})\) one gets:
\[
\begin{aligned}
f\left(X_{t}^{i, N}\right) \Phi_{t}^{i, N}h_0(X_{0}^{i}) &= f\left(X_{0}^{i}\right) h_0(X_{0}^{i})+ \int_{0}^{t} \nu \Delta f\left( X_{s}^{i, N}\right) \Phi_{s}^{i, N}h_0(X_{0}^i) ds \\
& - \int_{0}^{t} \nabla f\left(X_{s}^{i, N}\right)\frac{1}{N} \sum_{j \neq i} \bigg[ V^N \star K_{\alpha}\left(X_{s}^{i, N}-X_{s}^{j, N}\right) \wedge \Phi_s^{j, N} h_{0}\left(X_{0}^{j}\right)\bigg] \Phi_{s}^{i, N}h_0(X_{0}^i)ds \\
& + \int_{0}^{t} f\left(X_{s}^{i, N}\right) \frac{1}{N} \sum_{j \neq i}\left[\nabla V^N \star K_{\alpha}\left(X_s^{i, N}-X_s^{j, N}\right) \wedge \Phi_s^{j, N} h_0\left(X_0^j\right)\right] \Phi_s^{i, N} h_0(X_{0}^{i})ds \\
& - \int_0^t \nabla f\left(X_{s}^{i, N}\right) \Phi_{s}^{i, N}h_0(X_{0}^i) \cdot \sigma(X_{s}^{i, N})\circ dW_{s} + \int_0^t f\left(X_{s}^{i, N}\right) \Phi_{s}^{i, N}h_0(X_{0}^i) \nabla \sigma(X_{s}^{i, N})\circ dW_{s} \\
& + \sqrt{2\nu}\int_0^t \nabla f\left( X_{s}^{i, N}\right) \Phi_{s}^{i, N} h_{0}(X_{0}^i) dB_{s}^{i}.
\end{aligned}
\]

Now, from the equation above,
considering the sum over all \(i \in \{1, \dots, N\}\), one has:
\begin{equation}
\label{eq:mu_N}
\begin{aligned}
&\left\langle \mu^{N}_{t}, f \right\rangle = \left\langle \mu^{N}_{0}, f \right\rangle + \nu \int_{0}^{t}\left\langle \mu^{N}_{s}, \Delta f \right\rangle ds - \int_{0}^{t} \left\langle \mu^{N}_{s}, u^{N}_{s} \cdot \nabla f \right\rangle ds + \int_{0}^{t} \left\langle \mu^{N}_{s}, f \cdot \nabla u^{N}_{s} \right\rangle ds \\
& - \int_{0}^{t}\left\langle \mu^{N}_{s}, \sigma \cdot \nabla f \right\rangle \circ dW +  \int_{0}^{t}\left\langle \mu^{N}_{s}, f \cdot \nabla \sigma \right\rangle \circ dW = \frac{1}{N} \sum\limits_{i = 1}^{N}\int_{0}^{t} \nabla f\left(s, X_{s}^{i, N}\right) \Phi_{s}^{i, N} h_{0}(X_{0}^i) dB_{s}^{i},
\end{aligned}
\end{equation}
where:
\[
u^N_t(x) := \frac{1}{N} \sum_{j \neq i} \bigg[ V^N \star K_{\alpha}\left(x-X_{s}^{j, N}\right) \wedge \Phi_s^{j, N} h_{0}\left(X_{0}^{j}\right)\bigg].
\]
Now let us define:
\[
\varphi^{N}_{x}(y):=V^{N}(x-y),
\]
and let us consider a process $g^N_t : \T^3 \to \R^3$ as follows:
\[
g^{N}(x):=V^N \star \mu^N_t(x)=\langle \mu^{N}, \varphi_x^{N} \rangle.
\]
Also, let us define the following process $\mathfrak{F}_t^{i, N}(x): \T^3 \to \R^3$ to help the exposition:
\begin{equation}
\label{eq:definition_term_g}
\mathfrak{F}_t^{i, N}(x) := V^{N}(x-X_{t}^{i, N}) \Phi_{t}^{i, N} h_{0}(X_{0}^{i}).
\end{equation}
We remark that the process $g^N_t$ is $C^{\infty}$ as it is a finite sum of $C^{\infty}$ functions:
\[
g^N_t(x) = \frac{1}{N}\sum\limits_{i=1}^N V^{N}(x-X_{t}^{i, N}) \Phi_{t}^{i, N} h_{0}(X_{0}^{i}).
\]

From equation \eqref{eq:mu_N}, one obtains:
\[
\begin{aligned}
g^{N}_{t}(x) &= g^{N}_{0}(x) - \int_{0}^{t} \Delta g^{N}_{t}(x) ds - \int_{0}^{t} \left\langle \mu^{N}_{s}, u^{N}_{s} \cdot \nabla V^{N}(x - \cdot) \right\rangle ds + \int_{0}^{t} \left\langle \mu^{N}_{s}, V^{N}(x - \cdot) \cdot \nabla u^{N}_{s} \right\rangle ds \\
& - \int_{0}^{t}\left\langle \mu^{N}_{s}, \sigma \cdot \nabla V^{N}(x - \cdot) \right\rangle \circ dW + \int_{0}^{t}\left\langle \mu^{N}_{s}, V^{N}(x - \cdot) \cdot \nabla \sigma \right\rangle\circ dW \\
&= \frac{\sqrt{2\nu}}{N} \sum\limits_{i = 1}^{N}\int_{0}^{t} \nabla V^{N}(x-X_{s}^{i, N}) \Phi_{s}^{i, N} h_{0}(X_{0}^i) dB_{s}^{i},
\end{aligned}
\]
where:
\[
\begin{aligned}
u^{N}_{s}(x) &:= \left( V^N \star K_{\alpha} \right) \star \mu^N_t(x) = \frac{1}{N} \sum_{j \neq i} \bigg[ V^N \star K_{\alpha}\left(x-X_{s}^{j, N}\right) \wedge \Phi_s^{j, N}h_{0}\left(X_{0}^{j}\right)\bigg] \\
&= \frac{1}{N} \sum_{j \neq i} \bigg[ \int_{\T^3} V^N(x-X^{j, N}_s - y)K_{\alpha}(y) dy \wedge \Phi_s^{j, N}h_{0}\left(X_{0}^{j}\right)\bigg] \\
&= \int_{\T^3} K_{\alpha}(y) \wedge \frac{1}{N} \sum_{j \neq i} \bigg[ V^N(x-X^{j, N}_s - y) \Phi_s^{j, N}h_{0}\left(X_{0}^{j}\right) \bigg] dy\\
&= \int_{\T^3} K_{\alpha}(y) \wedge \left(V^N \star \mu^N \right)(x - y) dy \\
& = K_{\alpha} \star \left( V^N \star \mu^N_t\right)(x) = K_{\alpha} \star g^N_t(x).
\end{aligned}
\]

\section{Relative compactness of the sequence}
\label{sec:compactness} 

In this section, we prove that the sequence of the laws of $g^N_t$ is tight in the following space:
\[
C([0,T], \HH^{\eta}_p(\T^3;\R^3)),
\]
for a certain $\eta \in (0,1)$.
First of all, if one passes to the It\^{o} integral in the initial system:
\[
\begin{aligned}
g^{N}_{t}(x) &= g^{N}_{0}(x) + \int_{0}^{t} \left[ \nu \Delta g^{N}_{t}(x) + \frac{1}{2}\sigma^{T}Hg^{N}_{t}\sigma(x) + \frac{1}{2} \tr \left( \left( \nabla \sigma \right)^{T}Hg^{N}_{t}\left( \nabla \sigma \right)(x)\right)\right]ds \\
&+ \int_{0}^{t} \left\langle \mu^{N}_{s}, \left( \sigma \cdot \nabla \sigma \right) \cdot \nabla V^{N}(x - \cdot) \right\rangle ds + \int_{0}^{t} \left\langle \mu^{N}_{s}, \left( \left( \nabla \sigma \right) \cdot \nabla \left( \nabla \sigma \right) \right) \cdot \nabla V^{N}(x - \cdot) \right\rangle ds\\
&- \int_{0}^{t} \left\langle \mu^{N}_{s}, u^{N}_{s} \cdot \nabla V^{N}(x - \cdot) \right\rangle ds + \int_{0}^{t} \left\langle \mu^{N}_{s}, V^{N}(x - \cdot) \cdot \nabla u^{N}_{s} \right\rangle ds \\
& - \int_{0}^{t}\left\langle \mu^{N}_{s}, \sigma \cdot \nabla V^{N}(x - \cdot) \right\rangle dW_s + \int_{0}^{t}\left\langle \mu^{N}_{s},  V^{N}(x - \cdot) \cdot \nabla \sigma \right\rangle dW_s \\
& + \frac{\sqrt{2\nu}}{N} \sum\limits_{i = 1}^{N}\int_{0}^{t} \nabla V^{N}(x-X_{s}^{i, N}) \Phi_{s}^{i, N} h_{0}(X_{0}^i) dB_{s}^{i}
\end{aligned}
\]

Now, since $g^N_t \in C^{\infty}(\T^3)$, one can consider the mild form. See Theorem 6.5 and Theorem 6.7 in \cite{Da_Prato_Zabczyk_2014} to have that the mild form coincides with the strong form. We get:
\begin{equation}
\label{eq:mild_form}
\begin{aligned}
g^{N}_{t} &= e^{t A} g^{N}_{0} + \int_{0}^{t} e^{(t-s)A} \left\langle \mu^{N}_{s}, \left( \sigma \cdot \nabla \sigma \right) \cdot \nabla V^{N}(x - \cdot) \right\rangle ds + \int_0^t e^{(t-s)A}\left\langle \mu^{N}_{s}, \left( \left( \nabla \sigma \right) \cdot \nabla \left( \nabla \sigma \right) \right) \cdot \nabla V^{N}(x - \cdot) \right\rangle ds \\
&- \int_{0}^{t}e^{(t-s)A} \left\langle \mu^{N}_{s}, u^{N}_{s} \cdot \nabla V^{N}(x - \cdot) \right\rangle ds + \int_{0}^{t} e^{(t-s)A} \left\langle \mu^{N}_{s}, V^{N}(x - \cdot) \cdot \nabla u^{N}_{s} \right\rangle ds \\
& - \int_{0}^{t} e^{(t-s)A} \left\langle \mu^{N}_{s}, \sigma \cdot \nabla V^{N}(x - \cdot) \right\rangle dW + \int_{0}^{t} e^{(t-s)A} \left\langle \mu^{N}_{s}, V^{N}(x - \cdot) \nabla \sigma  \right\rangle dW \\
&+ \frac{\sqrt{2\nu}}{N} \sum\limits_{i = 1}^{N}\int_{0}^{t} e^{(t-s)A}\nabla V^{N}(x-X_{s}^{i, N}) \Phi_{s}^{i, N} h_{0}(X_{0}^i) dB_{s}^{i}
\end{aligned}
\end{equation}
where $A: \HH^{2}_p(\T^3) \subset \LL^p(\T^3) \to \LL^p(\T^3)$ is defined as:
\begin{equation}
\label{eq:definition_A}
\left\langle A, f \right\rangle := \nu \Delta f + \frac{1}{2} \tr \left( \sigma\sigma^T Hf \right) + \frac{1}{2} \tr \left( \left( \nabla \sigma \right)\left( \nabla \sigma \right)^{T}(Hf)\right).
\end{equation}
We notice that, as a consequence of Corollary 2.3 (section 3.2 of \cite{pazy}), if $\norm{\sigma}_{\W^{1, \infty}} < C \nu$ where $C$ is a universal constant, then $A$ is the infinitesimal generator of an analytical semigroup on $\LL^p(\T^3)$. We will denote $\left( e^{tA} \right)_{t \geq 0}$ for the semigroup associated to the operator $A$. See section 2.5 in \cite{pazy} for an overview of the results about analytical semigroups.

\begin{prop}
\label{thm:compact_1}
For all $q \geq 2$, there exists a $C>0$ such that for all $N \in \N$:
\[
\sup\limits_{t \in [0, T]}\E \left[ \left\|g_t^{N}\right\|_{\HH^{\alpha}_p}^q \right]^{1/q} \leq C
\]
\end{prop}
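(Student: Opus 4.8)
The plan is to work directly from the mild formulation \eqref{eq:mild_form} and estimate each of its eight contributions in $\HH^\alpha_p(\T^3;\R^3)$, then assemble the pieces into a singular Grönwall inequality for the map $t\mapsto\E\left[\norm{g^N_t}^q_{\HH^\alpha_p}\right]$. The two analytic facts I would use throughout are, first, the smoothing property of the semigroup generated by $A$ in \eqref{eq:definition_A}: since the smallness hypothesis $\norm{\sigma}_{\W^{1,\infty}}<C\nu$ makes $A$ the generator of an analytic semigroup on $\LL^p(\T^3)$, one has
\[
\norm{e^{tA}f}_{\HH^\beta_p}\le C\,t^{-(\beta-\gamma)/2}\norm{f}_{\HH^\gamma_p},\qquad \beta\ge\gamma,\ \beta-\gamma<2,
\]
with an integrable singularity; and second, the strong regularization of the mollified Biot--Savart operator $K_\alpha=(\indicator-\alpha^2\Delta)^{-1}K\star$, which gains three derivatives (one from $K$ and two from the Helmholtz operator), so that with $p>6$ one obtains $\norm{u^N_s}_{\W^{1,\infty}}\le C\norm{g^N_s}_{\HH^{s_0}_p}$ for some $s_0<0$, because $u^N_s=K_\alpha\star g^N_s$.

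First I would dispatch the deterministic terms. The initial term satisfies $\norm{e^{tA}g^N_0}_{\HH^\alpha_p}\le C\norm{g^N_0}_{\HH^\alpha_p}$, whose $q$-th moment is bounded uniformly in $N$ by Assumption \ref{ass:assprinci}(3). In each of the remaining drift integrals the factor $\nabla V^{N}(x-\cdot)$ carries one spatial derivative, which I would transfer onto the semigroup: estimating the duality pairings in $\HH^{\alpha-1}_p$ by $C_\sigma\norm{g^N_s}_{\HH^\alpha_p}$ for the $\sigma$-terms and by $C\norm{u^N_s}_{\W^{1,\infty}}\norm{g^N_s}_{\HH^\alpha_p}$ for the advection/stretching pair, and then invoking the smoothing estimate with $\beta-\gamma=1$, i.e. the weight $(t-s)^{-1/2}$, which is integrable on $[0,T]$.

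The stochastic terms require more care. The two $dW$ convolutions again lose a full derivative through $\nabla V^N$, but for a stochastic convolution the Burkholder--Davis--Gundy inequality pairs this loss with only the order-$1/2$ smoothing squared, producing a borderline non-integrable weight $(t-s)^{-1}$. I would circumvent this with the factorization method of Da Prato--Kwapień--Zabczyk, splitting the convolution through an intermediate kernel $(t-r)^{\kappa-1}(r-s)^{-\kappa}$; this is valid for $q$ sufficiently large, and the bound for all $q\ge2$ then follows from Jensen's inequality. The final martingale term carries the prefactor $1/N$ and is driven by the independent Brownian motions $B^i$; by BDG and independence its quadratic variation gains a factor $1/N$, and the constraint $0<\beta<\tfrac{1}{3+\alpha-6/p}$ is exactly what guarantees that this $1/N$ dominates the $N$-dependent blow-up of $\norm{\nabla V^N}$ in the relevant weak norm after semigroup smoothing, keeping the term uniformly bounded in $N$.

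Collecting the estimates yields an inequality of the form $\E\norm{g^N_t}^q_{\HH^\alpha_p}\le C_0+C\int_0^t(t-s)^{-\theta}\,\psi\big(\E\norm{g^N_s}^q_{\HH^\alpha_p}\big)\,ds$ with $\theta<1$, and the singular Grönwall lemma closes the argument once $\psi$ is at most linear. This last point is where I expect the main obstacle to lie: the advection and vortex-stretching terms are a priori \emph{quadratic} in $g^N$, so a naive iteration only controls $g^N$ up to a possible blow-up time. The resolution I would pursue exploits the $\alpha$-regularization, which is precisely what renders three-dimensional NS-$\alpha$ globally well-posed: I would first establish a uniform-in-$N$, uniform-in-time bound for $g^N$ in a weak space (for instance the $\HH^{-2}_2(\T^3)$ appearing in the compactness chain, through an energy-type identity in which the smallness of $\sigma$ supplies dissipation), thereby controlling $\norm{u^N_s}_{\W^{1,\infty}}$ by an already-bounded quantity; this makes $\psi$ effectively linear and delivers the bound on all of $[0,T]$.
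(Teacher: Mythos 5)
Your skeleton --- mild formulation driven by the analytic semigroup $e^{tA}$, smoothing estimates with a $\delta$-margin, Burkholder--Davis--Gundy for the $B^i$-martingale where the $1/N$ from independence beats the $N^{\beta(3+\alpha-6/p+\delta)}$ blow-up of the mollifier via the constraint on $\beta$ --- matches the paper's route (Lemma~\ref{lem:compactness_noises}), and your factorization device for the common-noise $dW$ convolution is a workable substitute for what the paper actually does there, namely shifting an extra $\delta$ of derivatives onto $V^N$ so that the squared singularity becomes the integrable $(t-s)^{-(1-\delta)}$ at the price of an $N^{\beta\delta}$ factor. The genuine gap is in your treatment of the advection and vortex-stretching terms, which is where the proof lives: your closing mechanism for the quadratic nonlinearity does not work.

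You bound those terms by $C\norm{u^N_s}_{\W^{1,\infty}}\norm{g^N_s}_{\HH^{\alpha}_p}$, correctly flag the quadratic structure as the obstacle, and propose to linearize it through a uniform-in-time a priori bound in $\HH^{-2}_2(\T^3)$ obtained from an energy identity. This fails on two counts. Quantitatively, $\W^{1,\infty}$ control of $u^N=K_\alpha\star g^N_s$ needs $g^N_s\in\HH^{s_0}_p$ with $s_0\ge -2+3/p+\epsilon$ (the kernel gains exactly three derivatives, one from $K$ and two from $(\indicator-\alpha^2\Delta)^{-1}$, and then one Sobolev-embeds with $p>6$), whereas $\HH^{-2}_2(\T^3)$ only embeds into $\HH^{-7/2+3/p}_p(\T^3)$ --- about $3/2$ derivatives short; equivalently, $K_\alpha:\HH^{-2}_2\to\HH^{1}_2$ and $\HH^{1}_2(\T^3)\not\hookrightarrow\W^{1,\infty}(\T^3)$ in dimension three. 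Structurally, no such weak-space bound is available at this stage: the only $\HH^{-2}_2$ estimate in the paper is the time-increment bound of Proposition~\ref{thm:compact_2}, which is itself proved \emph{using} Proposition~\ref{thm:compact_1} (through Lemma~\ref{lem:-2comma2_case}), so invoking it here is circular; moreover the smallness $\norm{\sigma}_{\W^{1,\infty}}<C\nu$ is used only to make $A$ generate an analytic semigroup, not to produce dissipation. The paper's actual resolution is different and is the key idea you are missing: the nonlinearity is the product $u^N_s\,\mu^N_s$ of the velocity and the \emph{empirical measure}, not of two copies of $g^N$. Its mass factor $\frac{1}{N}\sum_i\left|\Phi^{i,N}_s h_0(X_0^i)\right|$ is controlled by a separate, self-contained moment bound on the weight processes, inequality \eqref{eq:bound_Phi}, proved directly from the $\Phi$-SDE by BDG and Gr\"onwall, independently of $g^N$. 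Combined with $\norm{K_\alpha\star g^N_s}_{\LL^{\infty}}+\norm{\nabla K_\alpha\star g^N_s}_{\LL^{\infty}}\le C\norm{g^N_s}_{\LL^p}$ (Lemma~\ref{lem:property_of_Kalpha}, resting on Proposition~\ref{th:continuity_of_K_operator}) and a Cauchy--Schwarz over particles that trades the $1/N$ prefactor against the $N^{\beta(3-6/p)}$ blow-up of $V^N$, Lemma~\ref{lem:compact_advection_term} bounds these terms \emph{linearly} by $\int_0^t\E\left[\norm{g^N_s}_{\LL^p}^q\right]^{1/q}ds$. An ordinary (non-singular) Gr\"onwall then closes the estimate globally on $[0,T]$: there is no blow-up alternative to rule out and no auxiliary energy estimate is needed.
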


\begin{proof}
Just a consequence of Lemma \ref{lem:compactness_noises} and Lemma \ref{lem:compact_advection_term}. More precisely, consider the equation satisfied by $g^N$ in mild form, i.e. equation \eqref{eq:mild_form}. With Lemma \ref{lem:compactness_noises} one can estimate the stochastic integrals. All the remaining terms can be bounded with Lemma \ref{lem:compact_advection_term}. One ends up with:
\[
\E \left[ \left\|g_t^{N}\right\|_{\HH^{\alpha}_p}^q \right]^{1/q} \leq C + C\int_0^t
\E \left[ \left\|g_s^{N}\right\|_{\HH^{\alpha}_p}^q \right]^{1/q} ds.
\]
Therefore one concludes with Gronwall's Lemma.
\end{proof}

For a proof of the following two Lemmas, see the appendix.
\begin{lem}
\label{lem:compactness_noises}
Let $p > 6$ and $\alpha > 6/p$. Let $q \geq 2$. Then there exists $C$, independent of $N$ and $t \in [0, T]$, such that:
\[
\begin{aligned}
\mathbb{E}\left[\left\|\frac{1}{N} \sum_{i=1}^N \int_0^t(I-A)^{\frac{\alpha}{2}} e^{(t-s) A} \nabla\left(V^N\left(\cdot-X_s^{i, N}\right)\right) \Phi^{i, N}_s h_0(X_0)d B_s^{i}\right\|_{\mathbb{L}^p}^q\right] & \leq C, \\ \mathbb{E}\left[\left\|\int_0^t(I-A)^{\frac{\alpha}{2}} e^{(t-s) A}\left(\nabla V^N \star \left(\sigma_k \mu_s^{N}\right)\right) d W_s^k\right\|_{\mathbb{L}^p}^q\right] & \leq C .
\end{aligned}
\]
\end{lem}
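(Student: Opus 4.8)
The plan is to treat each stochastic convolution by the \emph{factorization method}, trading the unbounded factor $(I-A)^{\alpha/2}$ against the smoothing of the analytic semigroup. Throughout I will use the parabolic estimate $\norm{(I-A)^{\alpha/2}e^{\tau A}}_{\mathcal{L}(\mathbb{L}^p)}\leq C\tau^{-\alpha/2}$ for $\tau>0$ (valid since $A$ generates an analytic semigroup on $\mathbb{L}^p$, as recorded after \eqref{eq:definition_A}), together with the Burkholder--Davis--Gundy inequality in the UMD space $\mathbb{L}^p$ (applied pointwise in $x$ and combined with Minkowski's integral inequality, or equivalently the $\gamma$-radonifying stochastic-integral inequality), whereby a stochastic integral is controlled by the $\mathbb{L}^p$-norm of its square function. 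Since $(\Omega,\mathcal{F},\prob)$ is a probability space, $L^q(\Omega)\hookrightarrow L^{q'}(\Omega)$ for $q\geq q'$, so it suffices to prove both bounds for $q$ arbitrarily large; I fix $q>2/(1-\alpha)$, which is what the time-singularity integrability below requires.

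For the first integral, set $\Psi^{i}_s:=\nabla\!\left(V^N(\cdot-X^{i,N}_s)\right)\Phi^{i,N}_s h_0(X_0^i)$ and pick $\theta\in\left(\tfrac{\alpha}{2}+\tfrac1q,\tfrac12\right)$ (non-empty precisely because $q>2/(1-\alpha)$). The factorization identity gives
\[
\frac1N\sum_{i=1}^N\int_0^t (I-A)^{\frac{\alpha}{2}}e^{(t-s)A}\Psi^i_s\,dB^i_s = c_\theta\int_0^t (t-s)^{\theta-1}(I-A)^{\frac{\alpha}{2}}e^{(t-s)A}Z_s\,ds,
\]
with $Z_s:=\frac1N\sum_{i=1}^N\int_0^s (s-r)^{-\theta}e^{(s-r)A}\Psi^i_r\,dB^i_r$ and $c_\theta=\pi^{-1}\sin(\pi\theta)$. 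Bounding the deterministic convolution by the semigroup estimate and Hölder in $s$ (with $q'=q/(q-1)$) reduces matters to $\int_0^t\E\norm{Z_s}_{\mathbb{L}^p}^q\,ds$, the prefactor $\int_0^t(t-s)^{(\theta-1-\alpha/2)q'}ds$ being finite exactly by the choice of $\theta$. For $Z_s$ I would apply the square-function estimate; here the \emph{independence} of the $B^i$ is essential, since it makes the square function of the $N$-fold sum equal to the $\ell^2$-superposition $\frac{1}{N^2}\sum_i\int_0^s(s-r)^{-2\theta}\abs{e^{(s-r)A}\Psi^i_r}^2dr$, producing a factor $\tfrac1N$ rather than $1$ (the positivity of the diffusion semigroup lets one dominate $\abs{e^{(s-r)A}\Psi^i_r}$ pointwise when estimating the $\mathbb{L}^p$-norm).

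The crux is then a scaling computation in $N$. Since $\widehat{V^N}(k)=\widehat V(k/N^\beta)$, the function $V^N$ concentrates its frequencies on $\abs{k}\lesssim N^\beta$, and after inserting the derivative and the weight $(1+\abs{k}^2)^{\alpha/2}$ the single-particle contribution grows like a fixed power of $N^\beta$; combined with the gain $\tfrac1N$ from independence and with uniform moment bounds on $\Phi^{i,N}_s$ and the boundedness of $h_0$, the whole quantity stays bounded \emph{precisely} under $p>6$, $\alpha>6/p$ and $0<\beta<\tfrac{1}{3+\alpha-6/p}$ (the range singled out in the footnote of Section \ref{sec:particle_system}). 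Carrying out this book-keeping---tracking how the fractional smoothing, the time singularity and the $N^\beta$ frequency cutoff conspire to cancel the $\tfrac1N$ and leave a constant---is the main obstacle, and it is exactly what forces the stated constraints on $p,\alpha,\beta$.

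For the second integral the same factorization and square-function machinery applies, with two changes. First, the driving noises $W^k$ are common to all particles, so there is no per-particle independence gain; instead I would use the mollified product rule $\nabla V^N\star(\sigma_k\mu^N_s)\approx\nabla(\sigma_k g^N_s)$ and move the gradient into the fractional power, paying $\tau^{-(\alpha+1)/2}$, which is still integrable since $\alpha<1$, while the remaining factor is controlled by $\norm{\sigma_k}_{\W^{1,\infty}}\norm{g^N_s}_{\mathbb{L}^p}\leq\norm{\sigma_k}_{\W^{1,\infty}}\norm{g^N_s}_{\HH^\alpha_p}$. Second, one must sum over $k\in\N$, which converges thanks to the summability $\sum_k\norm{\sigma_k}_{\W^{1,\infty}}^2<\infty$ implicit in the well-posedness of \eqref{eq:NS_alpha_model} and Assumption \ref{ass:assprinci}. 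Assembling these pieces yields the desired control; the estimate so produced is the term that subsequently enters the Gronwall inequality in the proof of Proposition \ref{thm:compact_1}, and the delicate point here is the commutator/product-rule step, which is where the $\W^{1,\infty}$-regularity and summability of the noise coefficients are genuinely used.
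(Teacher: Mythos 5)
Your overall architecture (analytic-semigroup smoothing, BDG/square-function bounds, the $1/N^2\sum_i$ gain from the independence of the $B^i$, and a frequency-scaling argument for $V^N$) is the same skeleton as the paper's, but the proposal has two genuine gaps. First, the decisive quantitative step is deferred rather than proved: you state that "carrying out this book-keeping \ldots is the main obstacle" and that the constraints on $p,\alpha,\beta$ emerge from it, but that book-keeping \emph{is} the content of the lemma. The paper executes it concretely: it first uses the Sobolev embedding $\HH^{3+\alpha-6/p}_2(\T^3)\subset\HH^{\alpha}_p(\T^3)$ to trade the $\LL^p$-valued stochastic integral for a Hilbert-space one (so BDG is elementary, with no appeal to UMD/$\gamma$-radonifying theory), then splits the fractional power as $(\indicator-A)^{(1-\delta)/2}e^{(t-s)A}$ acting on $(\indicator-\Delta)^{(3+\alpha-6/p+\delta)/2}V^N$, pays $(t-s)^{-(1-\delta)}$ (squared, inside the square function) for the semigroup factor and $N^{\beta(3+\alpha-6/p+\delta)}$ for the derivatives landing on $V^N$, and closes with the moment bound $\sup_N\E\bigl[\sup_{t\in[0,T]}|\Phi^N_t|_2^q\bigr]<\infty$, which is itself proved by BDG and Gronwall since the $\Phi^{i,N}$ solve a coupled SDE; you assert these "uniform moment bounds on $\Phi^{i,N}_s$" without justification.

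Second, your treatment of the second integral breaks down quantitatively. You propose to rewrite $\nabla V^N\star(\sigma_k\mu^N_s)\approx\nabla(\sigma_k g^N_s)$ and absorb the full gradient into the fractional power, "paying $\tau^{-(\alpha+1)/2}$, which is still integrable since $\alpha<1$". But in the square function of a stochastic integral the operator-norm singularity appears \emph{squared}, i.e. $\tau^{-(\alpha+1)}$, which is non-integrable for every $\alpha\geq 0$; and within your factorization scheme the clash is structural: absorbing a full derivative in the inner integral $Z_s$ yields $(s-r)^{-2\theta-1}$ (never integrable), while shifting it to the outer deterministic convolution forces $\theta>(1+\alpha)/2>\tfrac12$, incompatible with the requirement $\theta<\tfrac12$ for $Z_s$ to exist. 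This is precisely why the paper never puts a full derivative on the semigroup: it assigns only order $1-\delta<1$ to $e^{(t-s)A}$ and charges the remaining derivative count to $V^N$ as powers of $N^{\beta}$, which is where the hypothesis $\beta<\tfrac{1}{3+\alpha-6/p}$ is actually consumed. (Two lesser points: since the estimate is for fixed $t$ with a uniform constant, the factorization method is unnecessary overhead — direct BDG at fixed $t$, as in the paper, suffices; and the summability $\sum_k\norm{\sigma_k}^2_{\W^{1,\infty}}<\infty$ you invoke is nowhere stated in Assumption \ref{ass:assprinci}, though in the particle system \eqref{eq:particle_system} there is effectively a single $\sigma$, so this is cosmetic.)
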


\begin{lem}
\label{lem:compact_advection_term}
Let $q \geq 2$. Then there exists a constant $C$, independent of $N$ and $t \in [0, T]$, such that:
\begin{equation}
\label{eq:mild_term_1}
\begin{aligned}
& \E \left[ \left\| \int_0^t (\indicator - A)^{\frac{\alpha}{2}}e^{(t-s)A} \left( \nabla V^N \star \left( K_{\alpha} \star g_s^{N}  \right) \mu_s^N \right) ds \right\|_{\LL^p}^q \right]^{1/q} \leq C \int_0^t \E \left[ \left\|g_s^{N}\right\|_{\LL^p}^q \right]^{1/q} ds.
\end{aligned}
\end{equation}
and:
\begin{equation}
\label{eq:mild_term_2}
\begin{aligned}
& \E \left[ \left\| \int_0^t (\indicator - A)^{\frac{\alpha}{2}}e^{(t-s)A} \left( V^N \star \left( \nabla K_{\alpha} \star g_s^{N}  \right) \mu_s^N \right) ds \right\|_{\LL^p}^q \right]^{1/q} \leq C \int_0^t \E \left[ \left\|g_s^{N}\right\|_{\LL^p}^q \right]^{1/q} ds.
\end{aligned}
\end{equation}
\end{lem}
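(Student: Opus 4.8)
The plan is to treat the two Bochner integrals by combining the smoothing of the analytic semigroup $(e^{tA})_{t\ge0}$ with the strong regularisation carried by the $\alpha$-Biot--Savart operator $K_\alpha$, together with the principle that any derivative landing on the concentrating mollifier $V^N$ can be rerouted. As a first step I would apply Minkowski's integral inequality to bring both the $\LL^q(\Omega)$-norm and the spatial $\LL^p$-norm inside the $ds$-integral, so that everything reduces to a pointwise-in-$s$ estimate of the integrand in $\LL^p$, multiplied by the operator norm of $(\indicator-A)^{\alpha/2}e^{(t-s)A}$, possibly composed with a gradient.

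For the first term the derivative sits on $V^N$, which is delicate since $\|\nabla V^N\|_{\LL^1}\sim N^{\beta}\to\infty$. The device I would use is that $u^N_s=K_\alpha\star g^N_s$ is independent of the convolution variable, so that $\nabla V^N\star(u^N_s\mu^N_s)=\nabla\big(V^N\star(u^N_s\mu^N_s)\big)$: the gradient can be pulled out of the convolution entirely. I would then absorb this external gradient into the semigroup through the bound $\|(\indicator-A)^{\alpha/2}e^{(t-s)A}\nabla\|_{\mathcal L(\LL^p)}\le C(t-s)^{-(1+\alpha)/2}$, which follows from the analytic estimate $\|(\indicator-A)^{(1+\alpha)/2}e^{(t-s)A}\|_{\mathcal L(\LL^p)}\le C(t-s)^{-(1+\alpha)/2}$ and the boundedness of the Riesz-type operator $(\indicator-A)^{-1/2}\nabla$ on $\LL^p$. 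The exponent satisfies $(1+\alpha)/2<1$ exactly because $\alpha<1$, so the resulting kernel is integrable on $[0,t]$. For the second term the derivative already acts on the kernel, $\nabla K_\alpha\star g^N_s=\nabla u^N_s$, so no rerouting is needed and the plain estimate $\|(\indicator-A)^{\alpha/2}e^{(t-s)A}\|_{\mathcal L(\LL^p)}\le C(t-s)^{-\alpha/2}$ applies.

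It remains to estimate the $\LL^p$-norm of the source $V^N\star(u^N_s\mu^N_s)$ (respectively $V^N\star(\nabla u^N_s\,\mu^N_s)$) by $\|g^N_s\|_{\LL^p}$. Here I would exploit the heavy smoothing of the $\alpha$-model kernel: since $K_\alpha=(\indicator-\alpha^2\Delta)^{-1}K$ gains sufficiently many derivatives, both $K_\alpha$ and $\nabla K_\alpha$ belong to $\LL^\infty(\T^3)$, so Young's inequality yields $\|u^N_s\|_{\LL^\infty}\le C\|g^N_s\|_{\LL^1}$ and $\|\nabla u^N_s\|_{\LL^\infty}\le C\|g^N_s\|_{\LL^1}$. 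Factoring these $\LL^\infty$-bounds out of the convolution, the remaining mass-one convolution against $V^N$ contributes a single factor $\|g^N_s\|_{\LL^p}$, while the velocity is controlled by the \emph{weak} norm $\|g^N_s\|_{\LL^1}\le\frac1N\sum_i|\Phi^{i,N}_s h_0(X^i_0)|$. The point is that this weak norm has moments bounded uniformly in $N$ and $s$: the linear $\Phi^{i,N}$-equation has coefficients that are themselves uniformly bounded in $N$, because the apparently singular drift $\nabla V^N\star K_\alpha$ rewrites as $V^N\star\nabla K_\alpha$ with $\nabla K_\alpha\in\LL^\infty$, whence a standard It\^o moment estimate bounds $\sup_{N,s}\E[|\Phi^{i,N}_s|^r]$. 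Splitting the product by H\"older and inserting this a priori bound turns the velocity factor into a genuine constant, leaving a bound linear in $\|g^N_s\|_{\LL^p}$ with the integrable kernel in front; this kernel is what the weakly singular Gronwall step in Proposition~\ref{thm:compact_1} consumes.

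The main obstacle is the collision between the concentrating mollifier and the quadratic nonlinearity: a direct estimate would simultaneously lose a factor $N^{\beta}$ from $\nabla V^N$ and produce a quadratic $\|g^N_s\|_{\LL^p}^2$, and neither defect can be closed by Gronwall. Both are cured by the same structural fact, namely that derivatives should be routed onto the strongly smoothing $K_\alpha$ rather than onto $V^N$: this removes the $N^{\beta}$ blow-up (via the gradient-transfer identity together with the semigroup) and demotes one of the two $g^N$-factors to the a priori bounded weak norm. The hypothesis $\alpha<1$ is used at exactly one place — to keep the kernel $(t-s)^{-(1+\alpha)/2}$ produced by the transferred derivative integrable.
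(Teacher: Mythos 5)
Your reduction skeleton matches the paper's proof in substance: Minkowski's integral inequality to pass the norms inside the time integral (the paper's Lemma~\ref{lem:banach_spaces_L1}), rerouting the gradient off $V^N$ and into the semigroup at the price of an integrable singularity (the paper implements the same transfer by writing $(\indicator-A)^{\alpha/2}e^{(t-s)A}=(\indicator-A)^{1-\delta}e^{(t-s)A}(\indicator-A)^{\alpha/2+\delta-1}$ with $\alpha/2+\delta-1/2\le 0$, which produces exactly your kernel exponent), and the observation that one of the two $g^N$-factors must be demoted to an a priori bounded quantity via moments of $\Phi^{i,N}$. The proof breaks, however, at the source estimate. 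After you factor $\left\| K_\alpha\star g^N_s\right\|_{\LL^\infty}$ out of $V^N\star\left(\left(K_\alpha\star g^N_s\right)\mu^N_s\right)$, the object that remains is the mollified \emph{total-variation} measure $\frac1N\sum_i V^N\left(\cdot-X^{i,N}_s\right)\left|\Phi^{i,N}_s h_0(X_0^i)\right|$, not $g^N_s=V^N\star\mu^N_s$: the absolute values on the vector weights destroy the identification, and there is no pathwise inequality bounding the positive-weight mollification by $\left\|g^N_s\right\|_{\LL^p}$ (the cancellations in $g^N_s$ go the wrong way, and a ``mass-one convolution'' bound applies to $V^N$ convolved against a \emph{function}, not against the atomic measure $|\mu^N_s|$). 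This is precisely the point the paper has to work for: it estimates the positive-weight object directly, obtaining $\bigl\|\frac1N\sum_i V^N(\cdot-X^i)a_i\bigr\|_{\LL^p}\lesssim N^{\beta(3-6/p)-1}\bigl(\sum_i|a_i|^2\bigr)^{1/2}$ via the embedding $\HH^{(3-6/p)/2}_2\subset\LL^p$ together with the scaling $\left\|(\indicator-\Delta)^{(3-6/p)/2}V^N\right\|_{\LL^2}\lesssim N^{\beta(3-6/p)}$, and then kills the $N$-power using the moment bound \eqref{eq:bound_Phi} on $|\Phi^N|_2$. This is where the standing hypothesis $\beta<\frac{1}{3+\alpha-6/p}$ is consumed --- a constraint your argument never invokes; note that the crude triangle-inequality alternative, $\|V^N\|_{\LL^p}\cdot\frac1N\sum_i|a_i|\sim N^{3\beta(1-1/p)-1/2}$, does \emph{not} close for all admissible $\beta$, so the refined estimate is genuinely needed. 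In the paper the single surviving factor $\left\|g^N_s\right\|_{\LL^p}$ then enters through the \emph{opposite} slot, namely $\left\|K_\alpha\star g^N_s\right\|_{\LL^\infty}\le C\left\|g^N_s\right\|_{\LL^p}$ (Lemma~\ref{lem:property_of_Kalpha}); your assignment of the constant and the $\LL^p$-factor to the two slots is reversed, and in your assignment the estimate is false.

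There is also a factual error underpinning your treatment of \eqref{eq:mild_term_2}: $\nabla K_\alpha\notin\LL^\infty(\T^3)$. Since $K$ behaves like $|x|^{-2}$ and $(\indicator-\alpha^2\Delta)^{-1}$ gains two derivatives, $K_\alpha$ is indeed bounded (its Fourier coefficients decay like $|k|^{-4}$, summable over $\Z^3$, as checked in the appendix), but the coefficients of $\nabla K_\alpha$ are of size $|k|^{-3}$, whose sum over $\Z^3$ diverges logarithmically; in real space $\nabla K_\alpha(x)\sim|x|^{-1}$ near the origin. Hence your bound $\left\|\nabla u^N_s\right\|_{\LL^\infty}\le C\left\|g^N_s\right\|_{\LL^1}$ fails, as does your justification of the uniform $\Phi^{i,N}$-moment bound through ``uniformly bounded drift coefficients'' $V^N\star\nabla K_\alpha$. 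The correct statement costs the strong norm: $\left\|\nabla\left(K_\alpha\star f\right)\right\|_{\LL^\infty}\le C\left\|f\right\|_{\LL^p}$, by Young's inequality with $K\in\LL^{p'}$, $p'<3/2$, plus Sobolev embedding (Proposition~\ref{th:continuity_of_K_operator} and Lemma~\ref{lem:property_of_Kalpha}) --- which is compatible with the lemma's right-hand side, so this error is repairable. The moment bound on $\Phi^N$ you invoke is true, but the paper derives it from Burkholder--Davis--Gundy and Gronwall applied directly to the $\Phi$-equation (display \eqref{eq:bound_phi_N}), not from boundedness of $\nabla K_\alpha$. (Your Young-demotion of the velocity to $\left\|g^N_s\right\|_{\LL^1}$ \emph{is} legitimate for \eqref{eq:mild_term_1}, since $K_\alpha\in\LL^\infty$; but it is then wasted, because the $\LL^p$-factor cannot be recovered from the mollified measure, as explained above.) With these two repairs your outline collapses onto the paper's proof; as written, the central estimate does not hold.
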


\begin{remark}
\label{rem:remark_boundedness}
We remark that the following two norms are equivalent:
\[
\left\| f \right\|_{\HH^{\alpha}_p} \sim \left\| (\indicator - A)^{\alpha/2}f \right\|_{\LL^p(\T^{3})}.
\]
For an detailed explanation of this see the appendix.
Therefore Proposition \ref{thm:compact_1} tells us:
\[
\E \bigg[ \bigg\| (\indicator -A)^{\frac{\alpha}{2}} g_t^{N} \bigg\|_{\LL^p(\T^3)}^q \bigg] \leq C, \quad \text{ for all } t \in [0,T],
\]
from which we can easily conclude that, choosing $q >2$:
\[
\begin{aligned}
\E \bigg[ \| g^{N} \|^2_{\LL^q([0,T], \HH^{\alpha}_p(\T^3))} \bigg] &=
\E \bigg[ \left( \int_0^t\| g^{N}_s \|^q_{\HH^{\alpha}_p(\T^3)} ds \right)^{2/q}\bigg] \\
&\leq \E \bigg[ \int_0^t \left( \| g^{N}_s \|^q_{\HH^{\alpha}_p(\T^3)} \right)^{2/q} ds \bigg] \\
&= \E \bigg[ \int_0^t \| g^{N}_s \|^2_{\HH^{\alpha}_p(\T^3)} ds \bigg] \\
&= \int_0^t \E \bigg[ \| g^{N}_s \|^2_{\HH^{\alpha}_p(\T^3)} \bigg] ds \\
&\leq T \sup\limits_{s \in [0, T]} \E \bigg[ \| g^{N}_s \|^2_{\HH^{\alpha}_p(\T^3)} \bigg] \leq C.
\end{aligned}
\]
\end{remark}

Let us also state:
\begin{prop}
\label{thm:compact_2}
Let $\gamma \in \left(0, \frac{1}{2}\right)$ and $q^{\prime} \geq 2$. There exists a positive constant $C$ such that, for any $N \in \mathbb{N}$, it holds that:
\[
\mathbb{E}\left[\int_0^T \int_0^T \frac{\left\|g_t^N-g_s^N\right\|_{\HH^{-2}_2(\T^3)}^{q^{\prime}}}{|t-s|^{1+q^{\prime} \gamma}} d s d t\right] \leq C.
\]
\end{prop}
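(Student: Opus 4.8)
The plan is to reduce this space--time fractional bound to a pointwise-in-$(s,t)$ moment estimate on the increments of $g^N$ and then integrate. Precisely, I would first establish that there is a constant $C>0$, independent of $N$, such that for all $0\le s\le t\le T$,
\begin{equation*}
\E\left[\left\|g_t^N-g_s^N\right\|_{\HH^{-2}_2(\T^3)}^{q'}\right]\le C\,|t-s|^{q'/2}.
\end{equation*}
Granting this, Fubini--Tonelli gives
\begin{equation*}
\E\left[\int_0^T\int_0^T\frac{\left\|g_t^N-g_s^N\right\|_{\HH^{-2}_2}^{q'}}{|t-s|^{1+q'\gamma}}\,ds\,dt\right]\le C\int_0^T\int_0^T |t-s|^{q'\left(\frac12-\gamma\right)-1}\,ds\,dt,
\end{equation*}
and the right-hand side is finite because $\gamma<\tfrac12$ makes the exponent $q'(\tfrac12-\gamma)-1$ strictly larger than $-1$. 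Thus everything rests on the uniform-in-$N$ increment estimate, and the weak space $\HH^{-2}_2$ is chosen precisely to make that estimate hold.

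To prove the increment bound I would work with the It\^o (strong) form of the equation for $g^N$ derived just before the mild formulation \eqref{eq:mild_form}, writing $g_t^N-g_s^N$ as a drift integral $\int_s^t F_r^N\,dr$, the two It\^o integrals against $W$, and the martingale $\tfrac{\sqrt{2\nu}}{N}\sum_i\int_s^t\nabla V^N(\cdot-X_r^{i,N})\Phi_r^{i,N}h_0(X_0^i)\,dB_r^i$. For the drift integral I would check that each summand of $F_r^N$ is controlled in $\HH^{-2}_2$ uniformly in $N$: the generator term obeys $\|A g_r^N\|_{\HH^{-2}_2}\le C\|g_r^N\|_{\LL^2}\le C\|g_r^N\|_{\HH^{\alpha}_p}$ since $A$ is a second-order operator with bounded coefficients, while for the advection and $\nabla\sigma$-drift terms the derivative landing on $\nabla V^N$ is transferred onto the $\HH^2_2$ test function by duality, so that these terms are bounded by $\|g_r^N\|_{\HH^{\alpha}_p}$ (or its square, through the smoothing of $K_{\alpha}$), exactly as in Lemma \ref{lem:compact_advection_term}. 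Jensen's inequality together with Proposition \ref{thm:compact_1} then yields a contribution of order $|t-s|^{q'}$.

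For the two It\^o integrals against $W$ I would apply the Burkholder--Davis--Gundy inequality in the separable Hilbert space $\HH^{-2}_2$ and bound the Hilbert--Schmidt norm of the diffusion coefficient by $C\|g_r^N\|_{\LL^2}$; here the factor $\nabla V^N\star(\sigma_k\mu_r^N)=\nabla\bigl(V^N\star(\sigma_k\mu_r^N)\bigr)$ loses one derivative but regains it in $\HH^{-2}_2$ via $\nabla:\LL^2\to\HH^{-1}_2\hookrightarrow\HH^{-2}_2$, with $k$-summability from the regularity of $\sigma$, just as in Lemma \ref{lem:compactness_noises}, giving a contribution of order $|t-s|^{q'/2}$. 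The genuinely delicate term, and the main obstacle, is the $B^i$-martingale: there $\nabla V^N(\cdot-X_r^{i,N})$ appears \emph{without} being convolved against $g^N$, and since $V^N\to\delta_0$ one has $\|\nabla V^N\|_{\HH^{-2}_2}^2\sim N^{\beta}\to\infty$ (the Dirac mass lies in $\HH^{-2}_2$ because $2>\tfrac32$, but its gradient does not). What rescues the estimate is the independence of the $B^i$: the It\^o isometry converts the prefactor $\tfrac1{N^2}\sum_{i=1}^N$ into an $N^{-1}$ gain, so the $q'=2$ contribution is of order $N^{-1}\cdot N^{\beta}\,|t-s|=N^{\beta-1}|t-s|$, bounded uniformly in $N$ precisely because the standing constraint $0<\beta<\tfrac1{3+\alpha-6/p}$ forces $\beta<1$. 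The same computation with the vector-valued Burkholder--Davis--Gundy inequality and uniform moment bounds on $\Phi^{i,N}$ extends this to all $q'\ge2$, yielding a contribution of order $N^{(\beta-1)q'/2}|t-s|^{q'/2}$.

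Collecting the four contributions gives the increment estimate with the worst (stochastic) exponent $q'/2$, which closes the argument. The only step requiring genuine care is the $B^i$-martingale above: it is the one place where the blow-up of the mollifier $V^N$ is felt at the level of the weak norm $\HH^{-2}_2$, and it is tamed solely by the variance reduction from the $N$ independent noises together with $\beta<1$. Every other term is a routine adaptation of estimates already in place in Proposition \ref{thm:compact_1} and Lemmas \ref{lem:compactness_noises}--\ref{lem:compact_advection_term}.
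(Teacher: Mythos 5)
Your proposal is correct and follows essentially the same route as the paper: the paper likewise derives the pointwise increment bound $\E\left[\left\|g^N_t-g^N_s\right\|_{\HH^{-2}_2(\T^3)}^{q'}\right]\le C|t-s|^{q'/2}$ from the It\^o form of the equation (H\"older in time for the drift, Burkholder--Davis--Gundy for the stochastic integrals, with Lemmas \ref{lem:-2comma2_case} and \ref{lem:-2comma2_case_noise} supplying exactly the uniform negative-Sobolev bounds you describe) and then integrates over $(s,t)$ using $\gamma<\tfrac12$. The only noteworthy variation is at the $B^i$-martingale, where you compute $\left\|\nabla V^N\right\|_{\HH^{-2}_2}^2\sim N^{\beta}$ directly in Fourier, while the paper trades $\nabla V^N$ measured in $\HH^{-2}_2$ for $V^N$ measured in $\HH^{-1}_2$ and then uses $\left\|V^N\right\|_{\LL^2}\sim N^{3\beta/2}$, obtaining the cruder factor $N^{3\beta-1}$ in place of your $N^{\beta-1}$ --- both uniformly bounded, since the standing constraint $\beta<\frac{1}{3+\alpha-6/p}$ forces $\beta<1/3$.
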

This tells us, in particular, that:
\[
\E \bigg[ \| g^{N} \|^2_{\HH^{\gamma}_{q'}([0,T], \HH^{-2}_{2}(\T^3))} \bigg] \leq C.
\]

For $p > 6$, $\alpha > 6/p$ to have the compact embeddings:
\[
\HH^{\alpha}_p(\T^3) \subset \HH^{\eta}_p(\T^3) \subset \HH^{-2}_2(\T^3).
\]
See \cite{hitchhiker} for the proof the compact embeddings above.
Now, from Corollary 9 of \cite{simon}, for any \(\gamma \in (0,\frac{1}{2})\), for \(\alpha \in (6/p, 1)\) and $p > 6$, and for \(q, q' \geq 2\), the following space:
\[
\mathfrak{Y}_0 = \LL^q([0,T], \HH^{\alpha}_p(\T^3)) \cap \HH^{\gamma}_{q'}([0,T], \HH^{-2}_{2}(\T^3))
\]
is relatively compact in the following:
\begin{equation}
\label{definition_space_y}
\mathfrak{Y} := C([0,T], \HH^{\eta}_p(\T^3;\R^3)),
\end{equation}
for any $\eta \in (6/p, \alpha)$.
For the sake of clarity, let us state the following two results used to show relative compactness in $C([0,T], \HH^{\eta}_p(\T^3;\R^3))$.

By applying Chebyshev's inequality, we obtain:
\[
\prob \left[\| g^{N}_{\cdot }\|^2_{\mathfrak{Y}_0} > R\right] \leq \frac{\E\left[\|g_{\cdot}^{N}\|^2_{\mathfrak{Y}_0}\right]}{R} \leq \frac{C}{R}
\]
for any \(R > 0\).
As a consequence of this and that $\mathfrak{Y}_0$ is relatively compact in $\mathfrak{Y}$, the sequence of laws of $g^N$ is tight in $\mathfrak{Y}$ .


\section{Characterization of the limit}
\label{sec:passing-to-the-limit}

Let us consider a test function
\(\phi : \mathbb{T}^{3} \to \mathbb{R}^{3}\) that is of class $C^{\infty}$. Since $g^N:\T^3 \to \R^3$ is a smooth function, it also satisfies:
\[
\begin{aligned}
& \left\langle  g^{N}_{t}, \phi \right\rangle = \left\langle g^{N}_{0}, \phi \right\rangle + \int_{0}^{t} \left\langle A g^{N}_{t}, \phi  \right\rangle ds \\
& - \int_{0}^{t} \left\langle \left\langle \mu^{N}_{s}, u^{N}_{s} \cdot \nabla V^{N}(x - \cdot) \right\rangle, \phi  \right\rangle ds + \int_{0}^{t} \left\langle \left\langle \mu^{N}_{s}, V^{N}(x - \cdot) \cdot \nabla u^{N}_{s} \right\rangle , \phi \right\rangle ds \\
& + \int_{0}^{t} \left\langle \left\langle \mu^{N}_{s}, \left( \sigma \cdot \nabla \sigma \right) \cdot \nabla V^{N}(x - \cdot) \right\rangle, \phi \right\rangle ds + \int_{0}^{t} \left\langle \left\langle \mu^{N}_{s}, \left( \left( \nabla \sigma \right) \cdot \nabla \left( \nabla \sigma \right) \right) \cdot \nabla V^{N}(x - \cdot) \right\rangle, \phi \right\rangle ds \\
& - \int_{0}^{t} \left\langle \left\langle \mu^{N}_{s}, \sigma \cdot \nabla V^{N}(x - \cdot) \right\rangle, \phi  \right\rangle dW + \int_{0}^{t} \left\langle \left\langle \mu^{N}_{s}, V^{N}(x - \cdot) \nabla \sigma \right\rangle, \phi  \right\rangle dW \\
& + \frac{1}{N} \sum\limits_{i = 1}^{N}\int_{0}^{t} \left\langle \nabla V^{N}(x-X_{s}^{i, N}) \Phi_{s}^{i, N} h_{0}(X_{0}^{i}), \phi \right\rangle dB_{s}^{i}.
\end{aligned}
\]

Recall that by the definition in equation \eqref{eq:definition_term_g}:
\[
g^N_t(x) = \frac{1}{N} \sum\limits_{i = 1}^N \mathfrak{F}_s^{i, N}(x).
\]

\begin{lem}
\label{lem:conv_prob_1}
The following holds:
\begin{equation}
\begin{aligned}
\lim\limits_{N \to \infty} \mathbb{E}\left[\left|\frac{1}{N} \sum\limits_{i = 1}^{N}\int_{0}^{t} \left\langle \nabla \mathfrak{F}_s^{i, N}(x) , \phi \right\rangle dB_{s}^{i}\right|^2\right] = 0.
\end{aligned}
\end{equation}
In addition, assume $\lim\limits_{N \to \infty} g^N = \omega$ in $C([0,T]; \HH^{\eta}_p(\T^3))$ in probability. Then for every $t \in [0,T]$ the following two limits hold in probability:
\begin{equation}
\lim\limits_{N \to \infty}\int_{0}^{t} \left\langle \left\langle \mu^{N}_{s}, \sigma \cdot \nabla V^{N}(x - \cdot) \right\rangle, \phi \right\rangle dW_s
= \int_0^t \left\langle \omega(s, \cdot) , \sigma \cdot \nabla \phi \right\rangle dW_s,
\end{equation}
\begin{equation}
\lim\limits_{N \to \infty}\int_{0}^{t} \left\langle \left\langle \mu^{N}_{s}, V^{N}(x - \cdot) \cdot \nabla u^{N}_{s} \right\rangle , \phi \right\rangle ds = \int_{0}^{t}  \left\langle \omega_s, \nabla \left( K_{\alpha} \star \omega_s \right)\phi \right\rangle ds.
\end{equation}
\end{lem}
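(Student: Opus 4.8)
The plan is to treat the three assertions separately, in increasing order of difficulty. For the vanishing of the $B^i$-martingale term I would expand the square and use the mutual independence of the family $(B^i)_i$: by the It\^o isometry every cross term drops out, leaving
\[
\E\Big[\Big|\tfrac1N\sum_i\int_0^t\langle\nabla\mathfrak{F}_s^{i,N},\phi\rangle\,dB_s^i\Big|^2\Big]=\tfrac1{N^2}\sum_i\E\int_0^t\big|\langle\nabla\mathfrak{F}_s^{i,N},\phi\rangle\big|^2\,ds.
\]
After integrating by parts in $x$ to move the gradient off $V^N$ and onto $\phi$, each inner product is bounded by $C\|\nabla\phi\|_\infty\,|\Phi_s^{i,N}|\,|h_0(X_0^i)|$, using $\int V^N=1$; since $h_0$ is bounded and the $\Phi^{i,N}$ carry moments uniform in $i,N,s$ (the same bounds that underlie Proposition~\ref{thm:compact_1}), each summand is $O(1)$ and the whole expression is $O(1/N)\to0$.

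For the $\sigma$-stochastic integral I would first remove the mollifier by a commutator estimate. Writing $a_i:=\Phi_s^{i,N}h_0(X_0^i)$ and integrating by parts in $x$, one verifies the identity
\[
\big\langle\langle\mu^N_s,\sigma\cdot\nabla V^N(x-\cdot)\rangle,\phi\big\rangle=\tfrac1N\sum_i\int_{\T^3}\big[\sigma(X_s^{i,N})\cdot\nabla(\phi\cdot a_i)(x)\big]V^N(x-X_s^{i,N})\,dx,
\]
whose difference from $\langle g^N_s,\sigma\cdot\nabla\phi\rangle$ is obtained by replacing $\sigma(X_s^{i,N})$ with $\sigma(x)$ on the support of $V^N(\cdot-X_s^{i,N})$, of radius $\lesssim N^{-\beta}$. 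This produces an error bounded by $C\,N^{-\beta}\|\nabla\sigma\|_\infty\|\nabla\phi\|_\infty\,\frac1N\sum_i|\Phi_s^{i,N}||h_0(X_0^i)|$, with expectation $O(N^{-\beta})$. Since $\sigma\cdot\nabla\phi$ is a fixed smooth (hence $\HH^{-\eta}_{p'}$) test field and $g^N_s\to\omega_s$ in $\HH^{\eta}_p$ in probability, the duality pairing gives $\langle g^N_s,\sigma\cdot\nabla\phi\rangle\to\langle\omega_s,\sigma\cdot\nabla\phi\rangle$ in probability for each $s$. To upgrade this to the stochastic integrals I would invoke the uniform moment bound of Proposition~\ref{thm:compact_1}: it makes the integrands uniformly integrable, so pointwise-in-$s$ convergence in probability together with Vitali's theorem yields $\E\int_0^t|\,\cdot\,|^2\,ds\to0$, and the It\^o isometry then forces $L^2$—hence in-probability—convergence to $\int_0^t\langle\omega_s,\sigma\cdot\nabla\phi\rangle\,dW_s$.

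The nonlinear term follows the same two moves (remove $V^N$, then pass to the limit) but is genuinely harder, being the three-dimensional vortex-stretching term, quadratic in the empirical field. Using the identity $u^N_s=K_\alpha\star g^N_s$ and the concentration of $V^N$, I would reduce the integrand to $\langle g^N_s,\psi^N_s\rangle$ up to an $O(N^{-\beta})$ commutator error, where $\psi^N_s$ is the smooth field obtained by contracting $\nabla u^N_s$ with $\phi$ according to the Lie-derivative structure. The decisive point is that $K_\alpha=(\indicator-\alpha^2\Delta)^{-1}(K\star\cdot)$ gains three derivatives, so that $g^N_s\to\omega_s$ in $\HH^\eta_p$ forces $\nabla u^N_s\to\nabla u_s=\nabla(K_\alpha\star\omega_s)$ uniformly (indeed in $C^1$, since $\HH^{\eta+2}_p\hookrightarrow C^1$ for $p>6$), hence $\psi^N_s\to\psi_s$ strongly in $\HH^{-\eta}_{p'}$. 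I would then split
\[
\langle g^N_s,\psi^N_s\rangle-\langle\omega_s,\psi_s\rangle=\langle g^N_s-\omega_s,\psi_s\rangle+\langle g^N_s,\psi^N_s-\psi_s\rangle,
\]
the first term vanishing by weak convergence against the fixed $\psi_s$, the second by the uniform $\HH^\eta_p$-bound on $g^N_s$ times the strong convergence of $\psi^N_s$. A final dominated-convergence argument over $s\in[0,t]$, again justified by Proposition~\ref{thm:compact_1}, gives convergence of the time integral in probability.

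The main obstacle is precisely this \emph{weak-times-strong} product in the stretching term: both factors, the empirical vorticity $g^N_s$ and the reconstructed velocity gradient $\nabla u^N_s$, depend on $N$ and a priori converge only in weak or negative-order topologies. What rescues the argument is the smoothing built into $K_\alpha$, which converts the merely weak $\HH^\eta_p$-convergence of $g^N_s$ into strong $C^1$-convergence of $\nabla u^N_s$. Keeping careful track of the tensor contractions of the 3D stretching term (absent in the 2D scalar-vorticity setting) and checking that the regularity budget in $\eta,\alpha,p$ makes both the embeddings and the $O(N^{-\beta})$ commutator estimates close is the technical heart of the proof.
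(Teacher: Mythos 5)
Your proposal is correct and follows essentially the same route as the paper: It\^o isometry plus independence of the $B^i$ for the vanishing martingale term, a mollifier-removal (commutator) estimate of size $O(N^{-\tilde{\eta}\beta})$ exploiting the scaling of $V^N$ together with the H\"older/Lipschitz regularity of $\sigma$, respectively of $\nabla\left(K_{\alpha}\star g^N_s\right)$ (controlled, as in the paper, via the smoothing of $K_{\alpha}$, Sobolev embedding, and the uniform moment bounds of Proposition~\ref{thm:compact_1} and of $\Phi^{N}$), and then passage to the limit using $g^N\to\omega$ in $C([0,T];\HH^{\eta}_p(\T^3))$ in probability. The only deviations are cosmetic: you conclude convergence of the stochastic integral via uniform integrability (Vitali) and the It\^o isometry, where the paper instead uses the elementary truncation bound $\prob\left[\left|\int_0^t\mathcal{M}^N_s\,dW_s\right|>\epsilon\right]\le\prob\left[\int_0^t\left|\mathcal{M}^N_s\right|^2ds>K\right]+K/\epsilon^2$, and you spell out the weak-times-strong splitting $\left\langle g^N_s-\omega_s,\psi_s\right\rangle+\left\langle g^N_s,\psi^N_s-\psi_s\right\rangle$ for the stretching term, a step the paper's proof leaves implicit.
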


\begin{proof}
First of all, notice that:
\[
\nabla\left(V^N * \phi\right)\left(X_s^{i, N}\right) = V^N * \nabla \phi\left(X_s^{i, N}\right),
\]
and that for all \(x \in \T^2\):
\[
\left| V^N * \nabla \phi (x) \right|^2 \leq \|\nabla \phi\|_{\mathbb{L}^{\infty}}^2 \left| \int_{\T^2} V^N(y) dy \right|^2 = \|\nabla \phi\|_{\mathbb{L}^{\infty}}^2,
\]
and then:
\[
\begin{aligned}
\mathbb{E}\left[\left|\frac{1}{N} \sum\limits_{i = 1}^{N}\int_{0}^{t} \left\langle \nabla \mathfrak{F}_s^{i, N}(x) , \phi \right\rangle dB_{s}^{i}\right|^2\right] & = \frac{1}{N^2} \sum_{i=1}^N \int_0^t \mathbb{E}\left[\left|\nabla\left(V^N * \phi\right)\left(X_s^{i, N}\right) \right|^2 \left| \Phi_{s}^{i, N} h_{0}(X_{0}^{i})\right|^2\right] d s \\
& \leq \frac{t}{N} \cdot \|\nabla \phi\|_{\mathbb{L}^{\infty}}^2 \underset{N \rightarrow \infty}{\longrightarrow} 0.
\end{aligned}
\]
Let us prove now the second statement. Notice that:
\[
\begin{aligned}
\int_{0}^{t} \left\langle \left\langle \mu^{N}_{s}, \sigma \cdot \nabla V^{N}(x - \cdot) \right\rangle, \phi  \right\rangle dW_s & = \int_{0}^{t} \int_{\T^3} \phi(x) \nabla \left( \frac{1}{N}\sum\limits_{i=1}^N \mathfrak{F}_s^{i, N}(x) \sigma(X^{i, N}) \right) dx dW_s \\
& = \int_{0}^{t} \int_{\T^3} \nabla \phi(x) \left( \frac{1}{N}\sum\limits_{i=1}^N \mathfrak{F}_s^{i, N}(x) \sigma(X^{i, N}) \right) dx dW_s \\
&= \int_{0}^{t} \frac{1}{N}\sum\limits_{i=1}^N \left(\int_{\T^3} \nabla \phi(x)  V^{N}(x - X^{i, N}) dx \right) \Phi^{i, N}h_0(X_0^i)\sigma(X^{i, N}) dW_s \\
&= \int_{0}^{t} \frac{1}{N}\sum\limits_{i=1}^N \left(\nabla \phi \star V^N \right)(X^{i,N}) \Phi^{i, N}h_0(X_0^i)\sigma(X^{i, N}) dW_s.
\end{aligned}
\]
Let us denote:
\[
\mathcal{M}_s^N:= \frac{1}{N}\sum\limits_{i=1}^N \left(\nabla \phi \star V^N \right)(X^{i,N}) \Phi^{i, N}h_0(X_0^i)\sigma(X^{i, N}) -\left\langle g^N_s , \left( V^N \star \nabla \phi \right) \cdot \sigma \right\rangle.
\]
With this definition, it is sufficient to show that, for every $\epsilon > 0$:
\begin{equation}
\label{eq:almost_the_limit_term}
\prob \left[ \left|  \int_0^t \mathcal{M}_s^N dW_s\right| > \epsilon \right] \xrightarrow[N \to \infty]{} 0.
\end{equation}
To prove \eqref{eq:almost_the_limit_term}, we notice that for every $\epsilon, K > 0$ (see Formula 1.21 in \cite{nualart}):
\[
\begin{aligned}
&\prob \left[ \left|  \int_{0}^{t} \mathcal{M}_s^N dW_s \right| > \epsilon \right] \leq \prob \left[   \int_{0}^{t} \left| \mathcal{M}_s^N \right|^2 ds > K \right] + \frac{K}{\epsilon^2}.
\end{aligned}
\]
Now, since:
\[
\left\langle g^N_s , \left( V^N \star \nabla \phi \right) \cdot \sigma \right\rangle = \left\langle \mu^N_s , V^N \star \left( \left( V^N \star \nabla \phi \right) \sigma \right)\right\rangle,
\]
one has:
\[
\begin{aligned}
\sup\limits_{s \in [0, T]}\left| \mathcal{M}_s^N \right| \leq \sup\limits_{x \in \T^3} \left| \left( V^N \star \nabla \phi \right) \cdot \sigma^k(x) - \left( \left( V^N \star \nabla \phi \right) \cdot \sigma^k \right) \star V^N (x) \right| \xrightarrow{N \to \infty} 0.
\end{aligned}
\]
Now, let us study the right hand side of the previous inequality:
\begin{equation}
\label{eq:char_limit_1}
\begin{aligned}
& \left| \sigma(x) \cdot \nabla\left(V^N \star \phi\right)(x)-\left( \left(\sigma \cdot \nabla\left(V^N \star \phi\right)\right) \star V^N\right)(x) \right| \\
& \stackrel{\left(\int V^N=1\right)}{\leq} \int_{\T^3} V(y)\left\|\nabla\left(V^N \star \phi\right)(x)\right\|\left\|\sigma(x)-\sigma\left(x-\frac{y}{N^\beta}\right)\right\| d y \\
& +\int_{\T^3} V(y)\left\|\nabla\left(V^N \star \phi\right)(x)-\nabla\left(V^N \star \phi\right)\left(x-\frac{y}{N^\beta}\right)\right\|\left\| \sigma(x)\right\| d y \\
& \leq \int_{\T^3} V(y)\left\|\nabla\left(V^N \star \phi\right)(x)\right\|\left\|\sigma(x)-\sigma\left(x-\frac{y}{N^\beta}\right)\right\| d y +\frac{C}{N^\beta} \int_{\T^3} V(y)\|y\| d y  \leq \\
& \leq \frac{1}{N^{\tilde{\eta} \beta}} \sup _{x, y \in \T^3} \frac{\left\|\sigma(x)-\sigma(y)\right\|}{\|x-y\|^{\tilde{\eta}}} \int_{\T^3} V(y)\|y\|^{\tilde{\eta}} d y + \frac{1}{N^\beta} \int_{\T^3} V(y)\|y\| d y \xrightarrow[N \to \infty]{} 0.
\end{aligned}
\end{equation}
for any $\tilde{\eta} \geq 0$. Therefore, it is enough to fix a $\tilde{\eta}$ such that:
\[
\sup _{x, y \in \T^3} \frac{\left\|\sigma(x)-\sigma(y)\right\|}{\|x-y\|^{\tilde{\eta}}} < \infty
\]

We are left with the last limit.
\[
\begin{aligned}
\lim\limits_{N \to \infty} \int_{0}^{t} \left\langle \left\langle \mu^{N}_{s}, V^{N}(x - \cdot) \cdot \nabla u^{N}_{s} \right\rangle , \phi \right\rangle ds=\int_{0}^{t}  \left\langle \mu^{N}_{s}, \left( \phi \star V^{N}\right) \cdot \nabla \left(K_{\alpha} \star g^N_s \right) \right\rangle ds.
\end{aligned}
\]
Now, one gets:
\begin{equation}
\begin{aligned}
& \left| \left\langle \mu^{N}_{s}, \left( \phi \star V^{N}\right) \cdot \nabla \left(K_{\alpha} \star g^N_s \right) \right\rangle -\left\langle g^{N}_{s}, \left( \phi \star V^{N}\right) \cdot \nabla \left(K_{\alpha} \star g^N_s \right) \right\rangle \right| \\
& \quad \leq \sup_{x \in \mathbb{T}^3} \left| \left( \phi \star V^{N}\right) \cdot \nabla \left(K_{\alpha} \star g^N_s \right)(x) - V^N \star \left( \left( \phi \star V^{N}\right) \cdot \nabla \left(K_{\alpha} \star g^N_s \right) \right)(x)  \right|
\end{aligned}
\end{equation}

Similarly to equation \eqref{eq:char_limit_1}, we have:
\[
\begin{aligned}
& \sup_{x \in \mathbb{T}^3} \left| \left( \phi \star V^{N}\right) \cdot \nabla \left(K_{\alpha} \star g^N_s \right)(x) - V^N \star \left( \left( \phi \star V^{N}\right) \cdot \nabla \left(K_{\alpha} \star g^N_s \right) \right)(x)  \right| \\
& \leq \frac{1}{N^{\tilde{\eta} \beta}} \sup _{x, y \in \T^3} \frac{\left\| \nabla K_{\alpha} \star g^N_s(x)- \nabla K_{\alpha} \star g^N_s(y)\right\|}{\|x-y\|^{\tilde{\eta}}} \int_{\T^3} V(y)\|y\|^{\tilde{\eta}} d y + \frac{1}{N^\beta} \int_{\T^3} V(y)\|y\| d y.
\end{aligned}
\]
If $\tilde{\eta}:= s - 3/r$ then for any $q \geq 2$:
\[
\E\left[ \left\| \nabla \left(K_{\alpha} \star g^N_s \right) \right\|_{C^{\tilde{\eta}}(\T^3)}^q \right] \leq C.
\]
In fact, because of Sobolev embeddings, there exists $s, r$ and $p>6$ such that:
\[
\begin{aligned}
\left\| \nabla \left(K_{\alpha} \star g^N_s \right) \right\|_{C^{s - 3/r}(\T^3)} \leq \left\| \nabla \left(K_{\alpha} \star g^N_s \right) \right\|_{\HH^{s}_r(\T^3)} \leq \left\| \nabla \left(\nabla \left(K_{\alpha} \star g^N_s \right) \right) \right\|_{\LL^r(\T^3)} \leq \left\| g^N_s\right\|_{\LL^p(\T^3)}.
\end{aligned}
\]
The last inequality is a consequence of Proposition \ref{th:continuity_of_K_operator}.
Therefore, because of Proposition \ref{thm:compact_1}:
\[
\E \left[ \left\| g^N_s\right\|_{\LL^p(\T^3)}^q \right] \leq C,
\]
and this concludes.
\end{proof}

\begin{prop}
If $g^N \xrightarrow[N \to \infty]{} \omega$ in $C([0,T]; \HH^{\eta}_p(\T^3))$ in probability, then $\omega$ satisfies the following:
\begin{equation}
\label{eq:stoch_navier_stokes}
\begin{aligned}
& \left\langle  \omega_{t}, \phi \right\rangle = \left\langle \omega_{0}, \phi \right\rangle + \int_{0}^{t} \left\langle \omega_{s}, A \phi  \right\rangle ds \\
&- \int_{0}^{t} \left\langle \omega_{s}, \left( K_{\alpha} \star \omega_{s} \right) \nabla \phi  \right\rangle ds + \int_{0}^{t} \left\langle \omega_{s}, \nabla \left( K_{\alpha} \star \omega_{s} \right) \phi  \right\rangle ds \\
& - \int_{0}^{t} \left\langle \omega_s , \left( \sigma \cdot \nabla \sigma \right) \cdot \nabla \phi \right\rangle ds + \int_{0}^{t} \left\langle \omega_s , \left( \left( \nabla \sigma \right) \cdot \nabla \left( \nabla \sigma \right) \right) \cdot \nabla \phi \right\rangle ds \\
& - \int_{0}^{t} \left\langle \omega_s , \sigma \cdot \nabla \phi  \right\rangle dW_s + \int_{0}^{t} \left\langle \omega_s , \nabla \sigma \phi  \right\rangle dW_s,\\
\end{aligned}
\end{equation}
where $A: \HH^{2}_p(\T^3) \to \LL^p(\T^3)$ is defined as:
\[
\left\langle A, f \right\rangle := \nu \Delta f + \frac{1}{2} \sigma^T (Hf) \sigma + \frac{1}{2} \tr \left( \left( \nabla \sigma \right)^{T}(Hf)\left( \nabla \sigma \right) \right),
\]
\end{prop}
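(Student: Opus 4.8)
The plan is to pass to the limit $N\to\infty$ directly in the weak (It\^o) formulation satisfied by $g^N_t$ that is displayed immediately before Lemma \ref{lem:conv_prob_1}, showing that each of its terms converges in probability to the corresponding term of \eqref{eq:stoch_navier_stokes}. The right-hand side of the target identity contains eight contributions — the initial datum, the generator term, the two advection terms, the two $\sigma$-correction drifts, and the two It\^o noise integrals — while the $g^N$ side carries in addition the $B^i$-driven martingale, which must vanish. Three of these passages are already established verbatim in Lemma \ref{lem:conv_prob_1}: the vanishing of the $B^i$-martingale, the convergence of the $\sigma\cdot\nabla V^N$ noise integral to $\int_0^t\langle\omega_s,\sigma\cdot\nabla\phi\rangle\,dW_s$, and the convergence of the stretching term to $\int_0^t\langle\omega_s,\nabla(K_\alpha\star\omega_s)\phi\rangle\,ds$. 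The remaining terms are of exactly the same two structural types (a deterministic mollified pairing and a stochastic mollified pairing), so I would dispose of them by replaying the estimates in the proof of that Lemma.

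First I would settle the linear contributions. For the generator, I would use that $A$ is formally self-adjoint — a consequence of the divergence-free condition in Assumption \ref{ass:assprinci} — to write $\langle A g^N_s,\phi\rangle=\langle g^N_s,A\phi\rangle$, and then pass to $\int_0^t\langle\omega_s,A\phi\rangle\,ds$ using $g^N\to\omega$ in $C([0,T];\HH^\eta_p)$ together with the uniform moment bound of Proposition \ref{thm:compact_1}, which supplies the uniform integrability needed to move the limit inside the time integral. The initial term converges by Assumption \ref{ass:assprinci}. The two $\sigma\cdot\nabla\sigma$ drifts and the second noise integral $\langle\langle\mu^N_s,V^N(x-\cdot)\nabla\sigma\rangle,\phi\rangle$ are treated exactly as the $\sigma\cdot\nabla V^N$ term: the key is the mollifier-commutator estimate of \eqref{eq:char_limit_1}, which shows that replacing $\mu^N_s$ by $g^N_s$ and $V^N(x-\cdot)$ by $\phi$ costs an error of order $N^{-\tilde\eta\beta}$ once $\tilde\eta$ is chosen so that the relevant field is $\tilde\eta$-H\"older; for the noise integral one additionally passes convergence in probability through the It\^o integral via the inequality (Formula 1.21 in \cite{nualart}) used in the Lemma.

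The one genuinely nonlinear term left is the first advection integral, $-\int_0^t\langle\langle\mu^N_s,u^N_s\cdot\nabla V^N(x-\cdot)\rangle,\phi\rangle\,ds$, which must converge to $-\int_0^t\langle\omega_s,(K_\alpha\star\omega_s)\nabla\phi\rangle\,ds$. Following the template of the stretching term, I would rewrite $u^N_s=K_\alpha\star g^N_s$, apply the mollifier-commutator estimate to replace $\mu^N_s$ by $g^N_s$ and $V^N(x-\cdot)$ by $\phi$ up to an error governed by the $C^{\tilde\eta}$-norm of $K_\alpha\star g^N_s$, and control that H\"older norm uniformly in $N$ through Proposition \ref{th:continuity_of_K_operator} and the $\LL^p$-bound of Proposition \ref{thm:compact_1}. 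It then remains to identify $\langle g^N_s,(K_\alpha\star g^N_s)\nabla\phi\rangle$ with $\langle\omega_s,(K_\alpha\star\omega_s)\nabla\phi\rangle$, a product of two converging factors; here the strong convergence $g^N\to\omega$ in $\HH^\eta_p$ is indispensable, since $\eta>6/p$ embeds $\HH^\eta_p$ continuously into $C(\T^3)$ while the uniform $\HH^\alpha_p$ bound makes $K_\alpha\star g^N_s$ precompact in a H\"older space, rendering the bilinear pairing continuous along the sequence.

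The step I expect to be the main obstacle is precisely this identification of the nonlinear (and stochastic) limits, because only convergence \emph{in probability} is available rather than a mode that makes the bilinear and stochastic-integral maps automatically continuous. To handle it I would extract a subsequence converging almost surely in $C([0,T];\HH^\eta_p)$, establish the pathwise convergence of the products and of the stochastic integrands along it, upgrade to convergence in $\LL^1(\Omega)$ of the time integrals by dominated convergence using the uniform moments of Proposition \ref{thm:compact_1}, and then invoke the convergence in probability of the full sequence to conclude that the asserted limit holds for the whole sequence. Once every term of the $g^N$ identity has been shown to converge in probability to its counterpart, \eqref{eq:stoch_navier_stokes} follows by taking limits on both sides.
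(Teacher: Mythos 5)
Your proposal is correct and follows essentially the same route as the paper: the paper's proof of this Proposition is the single line ``a consequence of Lemma \ref{lem:conv_prob_1}'', and the term-by-term passage you describe—using the vanishing $B^i$-martingale, the mollifier-commutator estimate \eqref{eq:char_limit_1} with Formula 1.21 of \cite{nualart} for the stochastic integrals, and the uniform bounds of Propositions \ref{thm:compact_1} and \ref{th:continuity_of_K_operator} for the nonlinear terms—is exactly the template of that Lemma replayed on the structurally identical remaining terms. The details you add (the formal self-adjointness of $A$, which the appendix justifies via $A f = \div(M\nabla f)$ with $\div(M)=0$; the advection term $u^N_s\cdot\nabla V^N$; and the subsequence argument reconciling convergence in probability with the bilinear and stochastic-integral limits) are precisely the steps the paper leaves implicit.
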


\begin{proof}
This Proposition is just a consequence of Lemma \ref{lem:conv_prob_1}.
\end{proof}

\section{Well-posedness of the Navier-Stokes-alpha model}
\label{sec:well-posed-limit}

\begin{remark}
Following from Remark \ref{rem:remark_boundedness}, if $\omega$ is a limit point of $g^N$, then:
\[
\E \left[ \int_0^t\norm{\omega}_{\LL^2(\T^3)}^2 ds\right] < +\infty.
\]
\end{remark}

\begin{prop}
\label{thm:uniqueness}
Let $\omega^1$ and $\omega^2$ be two stochastic processes with values in $C([0,T]; \HH^{\eta}_p(\T^3; \R^3))$ such that $\omega^1_0 = \omega^2_0$.
Let us assume:
\[
\E \left[ \int_0^t\norm{\omega^i}_{\LL^2(\T^3)}^2 ds\right] < +\infty, \quad \text{for } i=1,2,
\]
and that for every $\phi \in C^{\infty}(\T^3)$ they both satisfy the following:
\begin{equation}
\label{eq:solu}
\begin{aligned}
& \left\langle  \omega_{t}, \phi \right\rangle = \left\langle \omega_{0}, \phi \right\rangle + \int_{0}^{t} \left\langle \omega_{s}, A \phi  \right\rangle ds \\
&- \int_{0}^{t} \left\langle \omega_{s}, \left( K_{\alpha} \star \omega_{s} \right) \nabla \phi  \right\rangle ds + \int_{0}^{t} \left\langle \omega_{s}, \nabla \left( K_{\alpha} \star \omega_{s} \right) \phi  \right\rangle ds \\
& - \int_{0}^{t} \left\langle \omega_s , \left( \sigma \cdot \nabla \sigma \right) \cdot \nabla \phi \right\rangle ds + \int_{0}^{t} \left\langle \omega_s , \left( \left( \nabla \sigma \right) \cdot \nabla \left( \nabla \sigma \right) \right) \cdot \nabla \phi \right\rangle ds \\
& - \int_{0}^{t} \left\langle \omega_s , \sigma \cdot \nabla \phi  \right\rangle dW_s + \int_{0}^{t} \left\langle \omega_s , \nabla \sigma \phi  \right\rangle dW_s .\\
\end{aligned}
\end{equation}
Then set such that $\omega^1_t = \omega^2_t$ for all $t \in [0, T]$ has probability $1$ (they are indistinguishable).
\end{prop}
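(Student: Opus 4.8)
The plan is to argue by a stochastic energy estimate on the difference $\rho := \omega^1 - \omega^2$, which satisfies $\rho_0 = 0$. Subtracting the two copies of \eqref{eq:solu} shows that, for every smooth $\phi$, $\rho$ solves the same equation with the quadratic advection and stretching terms replaced by their linearisations. Writing $u^i := K_{\alpha}\star\omega^i$ and $v := K_{\alpha}\star\rho = u^1-u^2$, bilinearity gives
\[
u^1\cdot\grad\omega^1 - u^2\cdot\grad\omega^2 = u^1\cdot\grad\rho + v\cdot\grad\omega^2,
\]
and analogously $(\grad u^1)\omega^1 - (\grad u^2)\omega^2 = (\grad u^1)\rho + (\grad v)\omega^2$ for the vortex--stretching term, so the equation for $\rho$ is linear in $\rho$ with coefficients built from the fixed data $u^1,\grad u^1,\omega^2$ and from $v = K_{\alpha}\star\rho$. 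The whole point of the regularisation is that $K_{\alpha} = (\indicator-\alpha^2\Delta)^{-1}K\star$ gains three derivatives, so $u^1,v$ are smooth; in three dimensions one has $\norm{u^1}_{C^1}, \norm{\grad u^1}_{\LL^\infty}\lesssim \norm{\omega^1}_{\LL^2}$ and $\norm{v}_{C^1}\lesssim\norm{\rho}_{\LL^2}$, which is what makes the linearised problem tractable.

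I would then run the estimate in $\LL^2(\T^3;\R^3)$ and apply It\^o's formula to $t\mapsto\norm{\rho_t}_{\LL^2}^2$. The advection term is energy neutral, $\langle u^1\cdot\grad\rho,\rho\rangle = -\tfrac12\langle\div u^1,\abs{\rho}^2\rangle = 0$ since $\div u^1 = 0$; the stretching term and the cross term $(\grad v)\omega^2$ are bounded by $(\norm{\omega^1}_{\LL^2}+\norm{\omega^2}_{\LL^2})\norm{\rho}_{\LL^2}^2$ via the smoothing bounds above; and the only genuinely dangerous term $\langle v\cdot\grad\omega^2,\rho\rangle = -\langle\omega^2, v\cdot\grad\rho\rangle$ is estimated by $\tfrac{\nu}{2}\norm{\grad\rho}_{\LL^2}^2 + C\norm{\omega^2}_{\LL^2}^2\norm{\rho}_{\LL^2}^2$ and absorbed into the viscous dissipation. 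The decisive point is the It\^o correction from the transport noise: writing the martingale part as $\sum_k\lie_{\sigma_k}\rho\,dW^k$, its quadratic variation contributes $\sum_k\norm{\lie_{\sigma_k}\rho}_{\LL^2}^2$, and together with the second-order part of $\langle A\rho,\rho\rangle$ (namely $\tfrac12\tr(\sigma\sigma^T H\rho)+\tfrac12\tr((\grad\sigma)(\grad\sigma)^T H\rho)$) this is exactly where Assumption~\ref{ass:assprinci}(1), i.e. $\div(\sigma\sigma^T+(\grad\sigma)(\grad\sigma)^T)=0$, enters: it forces the terms linear in $\grad\rho$ to cancel and leaves only $-2\nu\norm{\grad\rho}_{\LL^2}^2$ plus a harmless $C\norm{\rho}_{\LL^2}^2$. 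Collecting everything so that the viscous contribution remains nonpositive, one arrives at
\[
d\norm{\rho_t}_{\LL^2}^2 \le C\,\psi_t\,\norm{\rho_t}_{\LL^2}^2\,dt + dM_t, \qquad \psi_t := 1 + \norm{\omega^1_t}_{\LL^2} + \norm{\omega^2_t}_{\LL^2}^2,
\]
with $M$ a continuous local martingale, $M_0=0$.

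To conclude I would invoke a stochastic Grönwall argument. Since $\omega^i\in C([0,T];\HH^{\eta}_p)\subset C([0,T];\LL^2)$, the weight $\psi$ is pathwise integrable on $[0,T]$; localising by the stopping times reducing $M$ to a true martingale, taking expectations, and applying Grönwall's lemma in its stochastic form (to accommodate the random coefficient $\psi$ alongside the martingale) yields $\E\norm{\rho_{t}}_{\LL^2}^2 = 0$ for every $t\in[0,T]$. Hence $\rho_t = 0$ almost surely for each fixed $t$, and by continuity of $t\mapsto\rho_t$ in $\HH^{\eta}_p$ this upgrades to $\omega^1$ and $\omega^2$ being indistinguishable.

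The main obstacle I anticipate is the rigorous justification of the It\^o formula for $\norm{\rho_t}_{\LL^2}^2$ at the regularity actually available, since the dissipation $\norm{\grad\rho}_{\LL^2}^2$ must be finite for the computation above to be meaningful, whereas $\omega^i$ is only known to be continuous in space and not a priori in $\HH^1$. I would resolve this by a Galerkin truncation on the Fourier basis of $\T^3$: the exponentials $\mathbf{e}_k$ are admissible smooth test functions, so \eqref{eq:solu} holds mode by mode, the finite-dimensional It\^o formula applies to the truncated norm, the viscous term legitimately absorbs the dangerous cross term for the projected system, the projection commutators in the nonlinear terms are controlled using the smoothness of $u^1,v$, and one then passes to the limit. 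An equivalent route is to carry out the estimate in $\HH^{-1}_2$, where $\langle\nu\Delta\rho,\rho\rangle_{\HH^{-1}} = -\nu\norm{\rho}_{\LL^2}^2 + \nu\norm{\rho}_{\HH^{-1}}^2$ supplies the needed $\LL^2$-dissipation directly from the given regularity $\rho\in\LL^2([0,T];\LL^2)$, at the cost of commutator estimates for $[(\indicator-\Delta)^{-1},\,u^1\cdot\grad]$ in the transport term. The secondary difficulty is the careful bookkeeping of the cancellation enforced by Assumption~\ref{ass:assprinci}(1), which is the single place where the special structure of the advection noise is used.
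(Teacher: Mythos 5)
Your fallback route is, in essence, the paper's actual proof, while your primary route has a real gap, so let me separate the two. The formal $\LL^2$ computation is fine, but at the stated regularity it cannot be closed: $\omega^i$ is only known to lie in $C([0,T];\HH^{\eta}_p)$ with $\eta<1$, and with no moment bounds on that norm, so the dissipation $\norm{\grad\rho}_{\LL^2}^2$ you want to absorb into is not known to be finite; and your Galerkin repair runs into exactly the commutator problem you flag. With $\rho_n:=P_n\rho$ the advection term leaves the remainder $\left\langle \rho-\rho_n,\, u^1\cdot\grad\rho_n\right\rangle$, which by the available spectral rates is of order $n^{1-2\eta}$ pathwise, hence only known to vanish when $\eta>1/2$ --- a condition not guaranteed by the standing choice $\eta\in(6/p,\alpha)$ --- and you have no moments of $\norm{\rho}_{\HH^{\eta}_p}$ with which to take expectations. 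The paper avoids all of this by running the estimate at strictly negative regularity: it applies the finite-dimensional It\^o formula to each Fourier mode $\left|\left\langle\tilde\omega_t,e_k\right\rangle\right|^2$ (legitimate directly from the weak formulation, with no infinite-dimensional It\^o formula and no projection commutators) and sums against the weights $(1+|k|^2)^{-\alpha/2}$ with $0<\alpha<1$, i.e.\ it performs the energy estimate in $\HH^{-\alpha}_2(\T^3)$ --- precisely your ``equivalent route'', which you should promote to the main line.

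Two further points where the paper's mechanics differ from your sketch. First, the transport cancellation $\div u^1=0$ is not used at all: after the bilinear splitting (identical to yours), every derivative-bearing term --- advection, stretching, and the Stratonovich correction drifts --- is absorbed by Young's inequality into the negative terms produced by $A$ and by the noise quadratic variation (in the paper's convention, $-\norm{\tilde\omega_s}^2_{\HH^{2-\alpha}_2}$ and $-\norm{\tilde\omega_s\sigma}^2_{\HH^{2-\alpha}_2}$), while the zero-order factors are controlled by product estimates exploiting only the three-derivative gain of $K_\alpha$, e.g.\ $\norm{\tilde\omega_s\,(\grad K_{\alpha}\star\omega^2_s)}_{\HH^{-\alpha}_2}\leq C\norm{\tilde\omega_s}_{\HH^{-\alpha}_2}\norm{\omega^2_s}_{\LL^2}$. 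In particular, your ``exact cancellation forced by Assumption~\ref{ass:assprinci}(1)'' overstates what happens: the assumption supplies the divergence form and ellipticity of $A$, and the noise terms are then handled by sign plus Young with lower-order leftovers, much as you allow. Second, the conclusion is exactly your stochastic Gr\"onwall step: the paper arrives at $\norm{\tilde\omega_t}^2_{\HH^{-\alpha}_2}\leq\int_0^t\norm{\omega^1_s}^2_{\LL^2}\norm{\tilde\omega_s}^2_{\HH^{-\alpha}_2}\,ds+M_t$, builds stopping times from $\int_0^{\cdot}\norm{\omega^1_s}^2_{\LL^2}\,ds$ (which increase to $T$ precisely by the integrability hypothesis), applies the stochastic Gr\"onwall lemma to get $\E\bigl[\sup_{t\leq\tau_k}\norm{\tilde\omega_t}^2_{\HH^{-\alpha}_2}\bigr]=0$, and --- a step you should not omit --- separately verifies that $M$ is a true martingale via the embedding $\HH^{\eta}_p(\T^3)\subset\HH^{2-2\alpha}_2(\T^3)$ for $\alpha$ close to $1$, which is the one place the $C([0,T];\HH^{\eta}_p)$ regularity is used quantitatively. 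So: your proposal is correct in outline provided you commit to the negative-Sobolev variant; the $\LL^2$/Galerkin main line as written does not close.
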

\begin{proof}
Let us define, for $k \in \Z$, the following $e_k:\T^3 \to \mathbb{C}$:
\[
e_k(x) := e^{ik\cdot x}, \quad \text{ for } x \in \T^3.
\]
Let us denote $\tilde{\omega}:=\omega^1-\omega^2$. Applying Ito's formula to:
\[
\left| \left\langle \tilde{\omega}_t, e_k\right\rangle \right|^2 = \left\langle \tilde{\omega}_t, e_k\right\rangle \left\langle \tilde{\omega}_t, e_{-k}\right\rangle,
\]
one gets:
\[
\begin{aligned}
\left| \left\langle \tilde{\omega}_t, e_k\right\rangle \right|^2 = \sum\limits_{j = 1}^{10} \int_0^t J^{k}_j(s) ds + \sum\limits_{j = 11}^{12} \int_0^t J^{k}_j(s) dW_s,
\end{aligned}
\]
where:
\[
J^k_1(s) = \left\langle \tilde{\omega}_s, e_{-k}\right\rangle \left\langle \tilde{\omega}_s, A e_k\right\rangle + \left\langle \tilde{\omega}_s, A e_{-k}\right\rangle \left\langle \tilde{\omega}_s, e_k\right\rangle,
\]
\[
J^k_2(s) = \left\langle \tilde{\omega}_s, e_{-k}\right\rangle \left\langle \omega^1_s, \left( K_{\alpha} \star \omega^1_s \right)
 \nabla e_k\right\rangle + \left\langle \tilde{\omega}_s, e_{k}\right\rangle \left\langle \omega^1_s, \left( K_{\alpha} \star \omega^1_s \right)
 \nabla e_{-k}\right\rangle,
\]
\[
J^k_3(s) = -\left\langle \tilde{\omega}_s, e_{-k}\right\rangle \left\langle \omega^2_s, \left( K_{\alpha} \star \omega^2_s \right)
\nabla e_k\right\rangle - \left\langle \tilde{\omega}_s, e_{k}\right\rangle \left\langle \omega^2_s, \left( K_{\alpha} \star \omega^2_s \right)
 \nabla e_{-k}\right\rangle,
\]
\[
J^k_4(s) = \left\langle \tilde{\omega}_s, e_{-k}\right\rangle \left\langle \omega^1_s, \nabla \left( K_{\alpha} \star \omega^1_s \right) e_k\right\rangle + \left\langle \tilde{\omega}_s, e_{k}\right\rangle \left\langle \omega^1_s, \nabla\left( K_{\alpha} \star \omega^1_s \right) e_{-k}\right\rangle,
\]
\[
J^k_5(s) = -\left\langle \tilde{\omega}_s, e_{-k}\right\rangle \left\langle \omega^2_s, \nabla \left( K_{\alpha} \star \omega^2_s \right) e_k\right\rangle - \left\langle \tilde{\omega}_s, e_{k}\right\rangle \left\langle \omega^2_s, \nabla\left( K_{\alpha} \star \omega^2_s \right) e_{-k}\right\rangle,
\]
\[
J^k_6(s) = \left\langle \tilde{\omega}_s , e_{-k}  \right\rangle \left\langle \tilde{\omega}_s , \left( \sigma \cdot \nabla \sigma \right) \cdot \nabla e_{k}  \right\rangle + \left\langle \tilde{\omega}_s , e_{k}  \right\rangle \left\langle \tilde{\omega}_s , \left( \sigma \cdot \nabla \sigma \right) \cdot \nabla e_{-k}  \right\rangle
\]
\[
J^k_7(s) = \left\langle \tilde{\omega}_s , e_{-k}  \right\rangle \left\langle \tilde{\omega}_s , \left( \left( \nabla \sigma \right) \cdot \nabla \left( \nabla \sigma \right) \right) \cdot \nabla e_{k}  \right\rangle + \left\langle \tilde{\omega}_s , e_{k}  \right\rangle \left\langle \tilde{\omega}_s , \left( \left( \nabla \sigma \right) \cdot \nabla \left( \nabla \sigma \right) \right) \cdot \nabla e_{-k}  \right\rangle
\]
\[
J^k_8(s) = \left| \left\langle \tilde{\omega}_s , \sigma \cdot \nabla e_k  \right\rangle \right|^2
\]
\[
J^k_9(s) = \left| \left\langle \tilde{\omega}_s ,  e_k \cdot \nabla \sigma \right\rangle \right|^2
\]
\[
J^k_{10}(s) = \left\langle \tilde{\omega}_s ,  e_{-k} \cdot \nabla \sigma \right\rangle \left\langle \tilde{\omega}_s , \sigma \cdot \nabla e_k  \right\rangle + \left\langle \tilde{\omega}_s ,  e_{k} \cdot \nabla \sigma \right\rangle \left\langle \tilde{\omega}_s , \sigma \cdot \nabla e_{-k}  \right\rangle
\]
\[
J^k_{11}(s) = \left\langle \tilde{\omega}_s , e_{-k}  \right\rangle \left\langle \tilde{\omega}_s , \sigma \cdot \nabla e_k  \right\rangle + \left\langle \tilde{\omega}_s , e_{k}  \right\rangle \left\langle \tilde{\omega}_s , \sigma \cdot \nabla e_{-k}  \right\rangle
 \]
 \[
 J^k_{12}(s) = \left\langle \tilde{\omega}_s , e_{-k}  \right\rangle \left\langle \tilde{\omega}_s , e_k \cdot \nabla \sigma  \right\rangle + \left\langle \tilde{\omega}_s , e_{k}  \right\rangle \left\langle \tilde{\omega}_s , e_{-k} \cdot \nabla \sigma  \right\rangle
 \]
Let $\alpha>0$, and let us compute the $\HH^{-\alpha}_2(\T^3)$-norm of $\tilde{\omega}$. One has:
\[
\begin{aligned}
\left\| \tilde{\omega}\right\|_{\HH^{-\alpha}_2(\T^3)}^2 =& \sum\limits_{k \in \Z^3} \frac{1}{(1+|k|^2)^{\alpha / 2}} \left| \left\langle \tilde{\omega}_t, e_k\right\rangle \right|^2 =
\sum\limits_{j=1}^{10}\int_0^t \sum\limits_{k \in \Z^3} \frac{1}{(1+|k|^2)^{\alpha / 2}}J^k_j(s) ds \\
+& \sum\limits_{j=11}^{12}\int_0^t \sum\limits_{k \in \Z^3} \frac{1}{(1+|k|^2)^{\alpha / 2}}J^k_j(s) dW_s.
\end{aligned}
\]
Now, first of all:
\[
\sum\limits_{k \in \Z^3} \frac{1}{(1+|k|^2)^{\alpha / 2}}J^k_1(s) \leq C(\sigma)\sum\limits_{k \in \Z^3} \frac{(ik)^2}{(1+|k|^2)^{\alpha / 2}}\left\langle \tilde{\omega}_s, e_{-k}\right\rangle \left\langle \tilde{\omega}_s, e_k\right\rangle = -C(\sigma) \sum\limits_{k \in \Z^3} \frac{k^2}{(1+|k|^2)^{\alpha / 2}} \left| \left\langle \tilde{\omega}_s, e_{k}\right\rangle \right|^2.
\]
This is because:
\[
\tr \left( (\nabla \sigma)^T He_k (\nabla \sigma ) \right) = i^2 \tr \left( (\nabla \sigma)^T k k^T \nabla \sigma \right) = - \norm{(\nabla \sigma)^T k} \leq 0.
\]
Now,
\[
\begin{aligned}
\sum\limits_{k \in \Z^3} \frac{1}{(1+|k|^2)^{\alpha / 2}}(J^k_2(s) + J^k_3(s) ) & = \sum\limits_{k \in \Z^3} \frac{1}{(1+|k|^2)^{\alpha / 2}}\left\langle \tilde{\omega}_s, e_{-k}\right\rangle \left\langle \omega^1_s, \left( K_{\alpha} \star \tilde{\omega}_s \right)
 \nabla e_k\right\rangle \\
&+ \sum\limits_{k \in \Z^3} \frac{1}{(1+|k|^2)^{\alpha / 2}}\left\langle \tilde{\omega}_s, e_{-k}\right\rangle \left\langle \tilde{\omega}_s, \left( K_{\alpha} \star \omega^2_s \right)
 \nabla e_k\right\rangle \\
 &= \sum\limits_{k \in \Z^3} \frac{ik}{(1+|k|^2)^{\alpha / 2}}\left\langle \tilde{\omega}_s, e_{-k}\right\rangle \left\langle \omega^1_s \left( K_{\alpha} \star \tilde{\omega}_s \right), e_k\right\rangle \\
&+ \sum\limits_{k \in \Z^3} \frac{ik}{(1+|k|^2)^{\alpha / 2}}\left\langle \tilde{\omega}_s, e_{-k}\right\rangle \left\langle \tilde{\omega}_s \left( K_{\alpha} \star \omega^2_s \right), e_k \right\rangle \\
&= \sum\limits_{k \in \Z^3} \frac{ik}{(1+|k|^2)^{\alpha / 2}}\left\langle \tilde{\omega}_s, e_{-k}\right\rangle \left( \left\langle \omega^1_s \left( K_{\alpha} \star \tilde{\omega}_s \right), e_k\right\rangle + \left\langle \tilde{\omega}_s \left( K_{\alpha} \star \omega^2_s \right), e_k \right\rangle \right)
\end{aligned}
\]
Furthermore:
\[
\begin{aligned}
\sum\limits_{k \in \Z^3} \frac{1}{(1+|k|^2)^{\alpha / 2}}(J^k_6(s) + J^k_7(s) ) &= \sum\limits_{k \in \Z^3} \frac{1}{(1+|k|^2)^{\alpha / 2}} \left\langle \tilde{\omega}_s , e_{-k}  \right\rangle \left\langle \tilde{\omega}_s , \left( \sigma \cdot \nabla \sigma \right) \cdot \nabla e_{k}  \right\rangle \\
&+ \sum\limits_{k \in \Z^3} \frac{1}{(1+|k|^2)^{\alpha / 2}} \left\langle \tilde{\omega}_s , e_{-k}  \right\rangle \left\langle \tilde{\omega}_s , \left( \left( \nabla \sigma \right) \cdot \nabla \left( \nabla \sigma \right) \right) \cdot \nabla e_{k}  \right\rangle \\
&= \sum\limits_{k \in \Z^3} \frac{ik}{(1+|k|^2)^{\alpha / 2}} \left\langle \tilde{\omega}_s , e_{-k}  \right\rangle \left\langle \tilde{\omega}_s \left( \sigma \cdot \nabla \sigma \right), e_{k}  \right\rangle \\
&+ \sum\limits_{k \in \Z^3} \frac{ik}{(1+|k|^2)^{\alpha / 2}} \left\langle \tilde{\omega}_s , e_{-k}  \right\rangle \left\langle \tilde{\omega}_s \left( \left( \nabla \sigma \right) \cdot \nabla \left( \nabla \sigma \right) \right) , e_{k}  \right\rangle \\
&= \sum\limits_{k \in \Z^3} \frac{ik}{(1+|k|^2)^{\alpha / 2}} \left\langle \tilde{\omega}_s , e_{-k}  \right\rangle \left\langle \tilde{\omega}_s \left( \left( \nabla \sigma \right) \cdot \nabla \left( \nabla \sigma \right)  + \left( \sigma \cdot \nabla \sigma \right) \right), e_{k}  \right\rangle
\end{aligned}
\]
Now:
\[
\sum\limits_{k \in \Z^3} \frac{1}{(1+|k|^2)^{\alpha / 2}}J^k_8(s) = -\sum\limits_{k \in \Z^3} \frac{k^2}{(1+|k|^2)^{\alpha / 2}} \left| \left\langle \tilde{\omega}_s \cdot \sigma, e_{k}\right\rangle \right|^2
\]
Finally:
\[
\begin{aligned}
\sum\limits_{k \in \Z^3} \frac{1}{(1+|k|^2)^{\alpha / 2}}J^k_{10}(s) & = \sum\limits_{k \in \Z^3} \frac{ik}{(1+|k|^2)^{\alpha / 2}} \left\langle \tilde{\omega}_s \sigma, e_k  \right\rangle \left\langle \tilde{\omega}_s ,  e_{-k} \cdot \nabla \sigma \right\rangle \\
&\leq \frac{1}{2} \sum\limits_{k \in \Z^3} \frac{k^2}{(1+|k|^2)^{\alpha / 2}} \left| \left\langle \tilde{\omega}_s \sigma, e_k  \right\rangle \right|^2 + \frac{1}{2} \sum\limits_{k \in \Z^3} \frac{1}{(1+|k|^2)^{\alpha / 2}} \left| \left\langle \tilde{\omega}_s ,  e_{-k} \cdot \nabla \sigma \right\rangle \right|^2 \\
&= \frac{1}{2} \norm{\tilde{\omega}_s \sigma}_{\HH^{2-\alpha}_2(\T^3)}^2 + \frac{1}{2} \norm{\tilde{\omega}_s \cdot \nabla \sigma}_{\HH^{-\alpha}_2(\T^3)}^2
\end{aligned}
\]
Finally one has:
\begin{equation}
\label{eq:estimate_uniqueness}
\begin{aligned}
\sum\limits_{k \in \Z^3}\frac{1}{(1+|k|^2)^{\alpha / 2}} \sum\limits_{j = 1}^{10} J^{k}_j(s) & \leq -\norm{\tilde{\omega}_s}_{\HH^{2-\alpha}_2(\T^3)}^2 + \frac{1}{2} \norm{\tilde{\omega}_s}_{\HH^{2-\alpha}_2(\T^3)}^2 \\
&- \norm{\tilde{\omega}_s \sigma }_{\HH^{2-\alpha}_2(\T^3)}^2 + \frac{1}{2} \norm{\tilde{\omega}_s \sigma }_{\HH^{2-\alpha}_2(\T^3)}^2 \\
&+ C \bigg( \norm{\omega^1_s K_{\alpha} \star \tilde{\omega}_s}_{\HH^{-\alpha}_2(\T^3)}^2 + \norm{\tilde{\omega}_s K_{\alpha} \star \omega^2_s}_{\HH^{-\alpha}_2(\T^3)}^2 + \norm{\tilde{\omega}_s \sigma \cdot \nabla \sigma}_{\HH^{-\alpha}_2(\T^3)}^2 \\
&+ \norm{\tilde{\omega}_s \left( \left( \nabla \sigma \right) \cdot \nabla \left( \nabla \sigma \right) \right)}_{\HH^{-\alpha}_2(\T^3)}^2 +  \norm{\omega^1_s \nabla K_{\alpha} \star \tilde{\omega}_s}_{\HH^{-\alpha}_2(\T^3)}^2 + \norm{\tilde{\omega}_s \nabla K_{\alpha} \star \omega^2_s}_{\HH^{-\alpha}_2(\T^3)}^2 \bigg).
\end{aligned}
\end{equation}
Let us estimate now:
\[
\norm{\tilde{\omega}_s \nabla K_{\alpha} \star \omega^2_s}_{\HH^{-\alpha}_2(\T^3)},
\]
and
\[
\norm{\omega^1_s \nabla K_{\alpha} \star \tilde{\omega}_s}_{\HH^{-\alpha}_2(\T^3)}.
\]
For the first:
\[
\norm{\tilde{\omega}_s \nabla K_{\alpha} \star \omega^2_s}_{\HH^{-\alpha}_2(\T^3)} \leq \norm{\tilde{\omega}_s}_{\HH^{-\alpha}_2(\T^3)} \norm{ \nabla K_{\alpha} \star \omega^2_s}_{\HH^{\alpha}_2(\T^3)} \leq C \norm{\tilde{\omega}_s}_{\HH^{-\alpha}_2(\T^3)} \norm{\omega^2_s}_{\LL^{2}(\T^3)},
\]
where the second inequality holds if $0< \alpha < 1$. In fact:
\[
\begin{aligned}
\norm{ \nabla K_{\alpha} \star \omega^2_s}_{\HH^{\alpha}_2(\T^3)}^2 &= \sum\limits_{k \in \Z^3-\{0\}} \frac{(1+|k|^2)^{(1+\alpha)/2}|k|}{(1+|k|^2)|k|^3} |\left\langle \omega_s^2, e_k\right\rangle|^2 \\
&\leq C \sum\limits_{k \in \Z^3-\{0\}} (1+|k|^2)^{(-3+\alpha)/2} |\left\langle \omega_s^2, e_k\right\rangle|^2 \leq C \norm{ \omega^2_s}_{\LL^{2}(\T^3)}^2.
\end{aligned}
\]
For the second, since:
\[
\begin{aligned}
\left| \left\langle \omega^1_{s}, \nabla \left( K_{\alpha} \star \tilde{\omega}_{s} \right) \phi  \right\rangle \right| & \leq \left\| \omega^1_{s} \right\|_{\LL^2}\left\| \nabla \left( K_{\alpha} \star \tilde{\omega}_{s} \right) \phi  \right\|_{\LL^2} \leq \left\| \omega^1_{s} \right\|_{\LL^2}\left\| \nabla \left( K_{\alpha} \star \tilde{\omega}_{s} \right) \phi  \right\|_{\LL^2} \\
& \leq \left\| \omega^1_{s} \right\|_{\LL^2}\left\| \nabla \left( K_{\alpha} \star \tilde{\omega}_{s} \right) \right\|_{\LL^2} \left\| \phi \right\|_{\LL^{\infty}}\leq C \left\| \omega^1_{s} \right\|_{\LL^2} \left\| \tilde{\omega}_s\right\|_{\HH^{-\alpha}_2(\T^3)},
\end{aligned}
\]
where for the penultimate inequality we used the same argument used in the first estimate.
Finally, we obtained:
\[
\norm{\tilde{\omega}_t}_{\HH^{-\alpha}_2(\T^3)}^2 \leq \int_0^t  \left\| \omega^1_{s} \right\|_{\LL^2}^2\norm{\tilde{\omega}_s}_{\HH^{-\alpha}_2(\T^3)}^2 ds + M_t,
\]
where:
\[
M_t = \int_0^t\sum\limits_{k \in \Z^3} \frac{1}{(1+|k|^2)^{\alpha/2}}J^k_{11}(s) dW_s + \int_0^t\sum\limits_{k \in \Z^3} \frac{1}{(1+|k|^2)^{\alpha/2}} J^k_{12}(s) dW_s
\]
Let us assume, for the moment, that we have already established that \( M_t \) is a martingale. Let us define the following stopping time:
 \[
 \tau_k := \inf \left\{ t \geq 0: \int_0^t  \left\| \omega^1_{s} \right\|_{\LL^2}^2 \geq k \right\} \wedge T.
 \]
Notice that:
\[
\tau_k \uparrow T, \text{ as } k \to +\infty.
\]
In fact, let us assume that there exists a set of positive probability $E$ such that, in $E$, the following holds:
\[
\limsup\limits_{k \in \N} \tau_k < T.
\]
This implies, in particular, that for each event in $E$, there exists a $t < T$ such that:
\[
\int_0^t  \left\| \omega^1_{s} \right\|_{\LL^2}^2 \geq k, \quad \text{ for all } k \in \N.
\]
In other words:
\[
\int_0^T  \left\| \omega^1_{s} \right\|_{\LL^2}^2 = +\infty,
\]
in a set of positive probability ($E$).
But this is in contrast with the assumption:
\[
\mathbb{E} \left[ \int_0^T  \left\| \omega^1_{s} \right\|_{\LL^2}^2\right] < +\infty.
\]
Now, by applying Lemma 5.3 of \cite{MR2502705} one has:
\[
\E \left[ \sup\limits_{t \in [0, \tau_k]}\norm{\tilde{\omega}_t}_{\HH^{-\alpha}_2(\T^3)}^2 \right] = 0,
\]
and letting $k \to +\infty$ one has:
\[
\E \left[ \sup\limits_{t \in [0, T]}\norm{\tilde{\omega}_t}_{\HH^{-\alpha}_2(\T^3)}^2 \right] = 0,
\]
from which the thesis follows.
Let us prove, now, that $M_t$ is a martingale. Let us start showing that:
\[
\sum\limits_{k \in \Z^3} \frac{1}{(1+|k|^2)^{\alpha/2}}J^k_{11}(s),
\]
is square integrable. One has:
\[
\begin{aligned}
\left( \sum\limits_{k \in \Z^3} \frac{1}{(1+|k|^2)^{\alpha/2}}J^k_{11}(s) \right)^2 &\leq \left( \sum\limits_{k \in \Z^3} \frac{1}{(1+|k|^2)^{\alpha/2}} \left\langle \tilde{\omega}_s , e_{-k}  \right\rangle \left\langle \tilde{\omega}_s , \sigma \cdot \nabla e_k  \right\rangle \right)^2 \\
&\leq C \left( \sum\limits_{k \in \Z^3} \frac{k^2}{(1+|k|^2)^{2\alpha/2}} |\left\langle \tilde{\omega}_s , e_{-k}  \right\rangle|^2 \right) \left( \sum\limits_{k \in \Z^3} |\left\langle \sigma \tilde{\omega}_s , e_{k}  \right\rangle|^2 \right) \\
&= C \norm{\tilde{\omega}_s}_{\HH^{-2\alpha + 2}_2}^2 \norm{\tilde{\omega}_s}_{\LL^{2}}^2 \leq \frac{C}{2}\norm{\tilde{\omega}_s}_{\HH^{-2\alpha + 2}_2}^4 +\frac{C}{2}\norm{\tilde{\omega}_s}_{\LL^{2}}^4.
\end{aligned}
\]
Now, since for any $\eta>0$ there exists $0<\alpha<1$ such that:
\[
\HH^{\eta}_p(\T^3) \subset \HH^{-2\alpha + 2}_2(\T^3),
\]
one concludes that:
\[
\E \left[ \int_0^t \left( \sum\limits_{k \in \Z^3} \frac{1}{(1+|k|^2)^{\alpha/2}}J^k_{11}(s) \right)^2 ds\right] < +\infty.
\]
Similarly, one obtains:
\[
\E \left[ \int_0^t \left( \sum\limits_{k \in \Z^3} \frac{1}{(1+|k|^2)^{\alpha/2}}J^k_{12}(s) \right)^2 ds \right] < +\infty.
\]
\end{proof}

\section{Proof of the main result}
\label{sec:conclusions}
To prove the Theorem, we make use of the following Gyongy-Krylov Lemma (see  \cite{gyongy} for a proof):

\begin{lem}
\label{lemma:gyongy}
Let $\left\{X_n\right\}$ be a sequence of random elements in a Polish space $E$ equipped with the Borel $\sigma$-algebra. Then $X_n$ converges in probability to a $E$-valued random element if and only if for each pair $\left(X_{\ell}, X_m\right)$ of subsequences, there exists a subsequence $\left\{v_k\right\}$ given by
\[
v_k=\left(X_{\ell(k)}, X_{m(k)}\right)
\]
converging weakly to a random element $v(x, y)$ supported on the diagonal set
\[
\{(x, y) \in E \times E: x=y\}.
\]
\end{lem}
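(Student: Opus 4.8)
The plan is to prove the two implications separately, working in the complete separable metric space $(E,d)$ and exploiting the standard fact that, in such a space, convergence in probability is equivalent to being Cauchy in probability, with completeness then supplying the limit.

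For the forward implication I would argue directly. If $X_n \to X$ in probability, then any two subsequences $X_{\ell}$ and $X_m$ also converge to $X$ in probability, so the paired sequence $(X_{\ell}, X_m)$ converges in probability, and hence in law, to $(X,X)$ in $E \times E$ endowed with the product metric. The law of $(X,X)$ is carried entirely by the diagonal, so one may take $v_k$ to be the full paired sequence; no further extraction is needed.

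The reverse implication is the substantive part, and the difficulty is that no candidate limit is provided a priori. I would circumvent this by proving that $\{X_n\}$ is Cauchy in probability and then invoking completeness. Suppose, for contradiction, that it is not Cauchy in probability: then there are $\epsilon, \delta > 0$ and subsequences $\ell(k), m(k) \to \infty$ with $\prob\!\left[d(X_{\ell(k)}, X_{m(k)}) > \epsilon\right] \geq \delta$ for all $k$. Applying the hypothesis to the pair of subsequences $\{X_{\ell(k)}\}_k$ and $\{X_{m(k)}\}_k$ yields a further subsequence, relabelled by $j$, along which the pairs converge weakly to some $v$ supported on the diagonal $\{(x,y): x=y\} = \{d=0\}$. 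Since the map $(x,y) \mapsto d(x,y)$ is continuous and the closed set $\{d \geq \epsilon\}$ is $v$-null, the portmanteau theorem gives $\limsup_j \prob\!\left[d(X_{\ell(j)}, X_{m(j)}) \geq \epsilon\right] \leq v(\{d \geq \epsilon\}) = 0$; as $\{d > \epsilon\} \subseteq \{d \geq \epsilon\}$, this contradicts the lower bound $\delta$. Hence $\{X_n\}$ is Cauchy in probability.

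To finish, I would extract a rapidly converging subsequence $X_{n_k}$ with $\prob\!\left[d(X_{n_k}, X_{n_{k+1}}) > 2^{-k}\right] \leq 2^{-k}$; the Borel--Cantelli lemma then makes $\{X_{n_k}\}$ almost surely Cauchy in $(E,d)$, so by completeness it converges almost surely to a measurable limit $X$, and the Cauchy-in-probability property upgrades this to $X_n \to X$ in probability. The main obstacle is precisely this reverse direction: the key idea is to avoid constructing the limit explicitly and instead translate the diagonal-support hypothesis into the assertion that $d(X_{\ell(j)}, X_{m(j)}) \to 0$ in law (equivalently in probability, the limit being the constant $0$), the only delicate point being the double-subsequence bookkeeping required to preserve the lower bound $\delta$ through the extra extraction.
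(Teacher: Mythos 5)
Your proposal is correct: the paper itself gives no proof of this lemma (it simply cites \cite{gyongy}), and your argument — the forward direction via joint convergence in probability of the paired sequence, and the reverse direction by showing $\{X_n\}$ is Cauchy in probability through a portmanteau contradiction on the closed set $\left\{(x,y) : d(x,y)\geq \epsilon\right\}$, followed by the standard Borel--Cantelli/completeness upgrade — is precisely the classical Gy\"ongy--Krylov argument from that reference. All steps check out, including the delicate one you flag: the lower bound $\delta$ survives the further extraction because it holds along every index of the chosen pair of subsequences.
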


Here's the main Theorem:

\begin{thm}
\label{thm:main_theorem}
Under the assumption \ref{ass:assprinci} the following limits:
\[
\lim\limits_{N \to \infty} g^N = \omega,
\]
holds in probability in $C([0, T]; \HH^{\eta}_p(\T^3))$ where $\omega$ satisfies equation \eqref{eq:NS_alpha_model}.
\end{thm}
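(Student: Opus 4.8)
The plan is to apply the Gy\"ongy--Krylov criterion (Lemma \ref{lemma:gyongy}) to the sequence $\{g^N\}$, viewed as random elements of the Polish space $\mathfrak{Y} = C([0,T]; \HH^{\eta}_p(\T^3; \R^3))$. The two structural inputs needed are already established: tightness of the laws, from Section \ref{sec:compactness}, and pathwise uniqueness for the limiting equation, from Proposition \ref{thm:uniqueness}. The task is therefore to combine these into convergence in probability of the \emph{full} sequence, bypassing any subsequence extraction in the final statement.

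First I would record tightness. Proposition \ref{thm:compact_1} together with Remark \ref{rem:remark_boundedness}, and Proposition \ref{thm:compact_2}, provide bounds uniform in $N$ on the moments controlling $\|g^N\|_{\LL^q([0,T];\HH^{\alpha}_p)}$ and $\|g^N\|_{\HH^{\gamma}_{q'}([0,T];\HH^{-2}_2)}$. Via Chebyshev's inequality and the compact embedding $\mathfrak{Y}_0 \subset \mathfrak{Y}$ (Corollary 9 of \cite{simon}), the laws of $g^N$ form a tight family on $\mathfrak{Y}$. Next, I would fix an arbitrary pair of subsequences and consider the joint laws of $(g^{N_\ell}, g^{N_m})$ on $\mathfrak{Y} \times \mathfrak{Y}$. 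Since each marginal family is tight, so is the joint family, and I would extract a further subsequence along which the joint law converges weakly to some measure $\nu$. By the Skorokhod representation theorem I would realize this convergence almost surely on a common probability space, producing a pair of limit processes $(\omega^1, \omega^2)$ driven by a \emph{common} Brownian motion $W$ and sharing the common initial datum $\omega_0$ supplied by Assumption \ref{ass:assprinci}.

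I would then re-run the limit identification of Section \ref{sec:passing-to-the-limit}, in particular Lemma \ref{lem:conv_prob_1}, separately on each coordinate, concluding that both $\omega^1$ and $\omega^2$ satisfy the limiting equation \eqref{eq:stoch_navier_stokes}; the integrability remark preceding Proposition \ref{thm:uniqueness} guarantees the $\LL^2$ bound $\E[\int_0^t \norm{\omega^i}_{\LL^2(\T^3)}^2 ds] < \infty$ required to invoke uniqueness. Since $\omega^1$ and $\omega^2$ solve the same equation, with the same noise and the same initial condition, Proposition \ref{thm:uniqueness} forces $\omega^1 = \omega^2$ almost surely, so $\nu$ is concentrated on the diagonal of $\mathfrak{Y} \times \mathfrak{Y}$. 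Lemma \ref{lemma:gyongy} then yields convergence of the entire sequence $g^N$ in probability to a limit $\omega$ in $\mathfrak{Y}$, and the characterization of Section \ref{sec:passing-to-the-limit} identifies $\omega$ as a solution of \eqref{eq:stoch_navier_stokes}, which is precisely the weak vorticity formulation of \eqref{eq:NS_alpha_model}.

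The main obstacle I anticipate is the joint limit identification. Lemma \ref{lem:conv_prob_1} was proved for a single sequence, whereas the Gy\"ongy--Krylov argument requires that both Skorokhod copies be driven by the \emph{same} Wiener process, so that the stochastic integrals $\int_0^t \langle \omega_s, \sigma \cdot \nabla \phi \rangle \, dW_s$ converge to the correct common noise term and the uniqueness hypothesis genuinely applies to the pair. Verifying the convergence of these It\^o integrals under the joint law, and confirming that the martingale terms vanishing in $N$ (the $dB^i$ terms of Lemma \ref{lem:conv_prob_1}) do so jointly, is the delicate part; the deterministic drift and Laplacian terms pass to the limit by the same continuity estimates used for a single sequence.
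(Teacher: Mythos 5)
Your proposal is correct and follows essentially the same route as the paper's own proof: tightness from Section \ref{sec:compactness}, Skorokhod representation of the joint laws of pairs $(g^{N_\ell}, g^{N_m})$, identification of both limits as solutions of \eqref{eq:solu} via Section \ref{sec:passing-to-the-limit}, uniqueness from Proposition \ref{thm:uniqueness} to concentrate $\nu$ on the diagonal, and finally Lemma \ref{lemma:gyongy}. In fact you are more careful than the paper at the one genuinely delicate point—verifying that both Skorokhod copies are driven by a \emph{common} Wiener process so that Proposition \ref{thm:uniqueness} applies to the pair—which the paper's proof passes over silently.
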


\begin{proof}
Let us consider a pair $v_{i, j} = (g^i, g^j)_{i, j}$ with values in $\mathfrak{Y} \times \mathfrak{Y}$. Because of Section \ref{sec:compactness}, we can conclude that there exists a subsequence of the pair, say $v_{i_k, j_k} = (g^{i_k}, g^{j_k})_k$, such that $v_{i_k, j_k}$ converges weakly to a probability $\nu$ on $\mathfrak{Y} \times \mathfrak{Y}$.

Now, because of Skorokhod representation theorem (see p.70 in \cite{skorohod}), modulo changing probability space, a random variable $(\tilde{u}, \overline{u})$ exists in the new probability space, and $(\overline{g}^{i_k}, \overline{g}^{j_k})_k$ such that:
\[
(\overline{g}^{i_k}, \overline{g}^{j_k})_k \xrightarrow[]{\text{weakly}} (\tilde{u}, \overline{u}),
\]
with $\text{Law}\left(\overline{g}^{i_k}, \overline{g}^{j_k} \right) = \text{Law}\left( g^{i_k}, g^{j_k} \right)$. Therefore, by Section \ref{sec:passing-to-the-limit} one concludes that $\tilde{u}$ and $\overline{u}$ both satisfy equation \eqref{eq:solu}. Now, because of Proposition \ref{thm:compact_1} one can use Proposition \ref{thm:uniqueness} to conclude $\tilde{u} = \overline{u}$ a.s.

Now, we notice that this implies:
\[
\nu \left( \left\{(x, y) \in \mathfrak{Y} \times \mathfrak{Y}: x=y \right\} \right) = 0.
\]
Now Lemma \ref{lemma:gyongy} implies that the original sequence in the original probability space converges in probability.
\end{proof}

\newpage
\section{Appendix}
\label{sec:appendix}

\subsection{Properties of the smoothed kernel}
\label{subsec:properties_of_K}

Let $\T^3$ be the $3$-dimensional torus. We consider the Biot-Savart kernel:
\begin{equation}
\label{eq:biot_savart}
K(x):=\frac{i}{2 \pi} \sum_{k \in \mathbb{Z}^3: k \neq 0} \exp (i k \cdot x) \frac{k}{k^3}, \quad x \in \mathbb{T}^3,
\end{equation}
that acts on a function $\omega:\T^3 \to \R^3$ as follows:
\[
K \star \omega (x) := \int_{\T^3} K(x - y) \wedge \omega(y) dy .
\]
Then we consider the mollified Biot-Savart kernel $K_{\alpha}$:
\[
K_{\alpha} \left( \omega \right) := (\indicator - \alpha^2 \Delta)^{-1} (K \star \omega),
\]
that can be interpreted as a convolution with the kernel:
\[
K_{\alpha}(x) := \frac{i}{2 \pi} \sum_{k \in \mathbb{Z}^3: k \neq 0} \exp (i k \cdot x) \frac{1}{k^3(1+\alpha^2 k^2 )}k.
\]
Therefore we can write:
\[
K_{\alpha} \left( \omega \right) = K_{\alpha} \star \omega, \quad \text{ for }\omega: \T^3 \to \R^3, \quad \omega \in \LL^p(\T^3).
\]
Notice that $K_{\alpha} \in \LL^{\infty}$. This is because:
\[
\left|K_{\alpha}(x)\right| \leq \frac{1}{2 \pi} \sum_{k \in \mathbb{Z}^3: k \neq 0} \left| \frac{1}{k^3(1+\alpha^2 k^2 )}k \right| = C \sum_{h \in \mathbb{Z}: k \neq 0} \left| \frac{h^2}{h^3(1+\alpha^2 h^2 )}h \right| < \infty
\]

\begin{prop}
\label{th:continuity_of_K_operator}
Let $r,q>1$ be such that the following relationship holds:
\[
\frac{1}{r} > \frac{1}{q} + \frac{1}{2}.
\]
Now, if $f \in \LL^q(\T^3)$ then:
\[
\left\| \nabla \left( \nabla \left( K_{\alpha} \star f\right) \right) \right\|_{\LL^{r}(\T^3)} \leq C \left\| f\right\|_{\LL^q(\T^3)}
\]
\end{prop}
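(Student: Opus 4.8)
The plan is to move both derivatives onto the convolution kernel and then apply Young's convolution inequality, reducing the whole statement to a single regularity estimate for the kernel $\nabla \nabla K_{\alpha}$. Since differentiation commutes with convolution, one has $\nabla\left(\nabla\left(K_{\alpha}\star f\right)\right) = \left(\nabla \nabla K_{\alpha}\right)\star f$; at the level of Fourier coefficients this is the identity $(ik)\otimes(ik)\,\widehat{K_{\alpha}}(k)\wedge \widehat{f}(k)$, which simultaneously justifies interchanging the derivative with the convolution and exhibits the operator as convolution against a \emph{fixed} kernel (the cross product being bilinear, Young applies componentwise). The problem therefore splits into two steps: first, identify the integrability class of $\nabla \nabla K_{\alpha}$; second, feed this into Young's inequality, adjusting the target exponent by means of the finite measure of $\T^3$.

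For the first step I would compute the Fourier coefficients of $\nabla \nabla K_{\alpha}$ directly from the series defining $K_{\alpha}$ in \eqref{eq:biot_savart}. The multiplier of $K_{\alpha}$ has magnitude $\abs{k}^{-3}(1+\alpha^2\abs{k}^2)^{-1}\abs{k}\sim \abs{k}^{-4}$, and applying $\nabla \nabla$ multiplies it by $\abs{k}^2$, leaving coefficients of size $\sim \abs{k}^{-2}$. Since $\sum_{k\in\Z^3\setminus\{0\}}\abs{k}^{-4}<\infty$ in three dimensions, Parseval's identity gives $\nabla \nabla K_{\alpha}\in\LL^2(\T^3)$, and because $\T^3$ has finite measure this also yields $\nabla \nabla K_{\alpha}\in\LL^1(\T^3)$. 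The conceptual point worth stressing — and the only genuine subtlety — is that the two smoothing factors (the extra power in the Biot--Savart symbol together with the Helmholtz regularization $(1+\alpha^2\abs{k}^2)^{-1}$) make $\nabla \nabla K_{\alpha}$ an honest $\LL^2$ function rather than a Calderón--Zygmund singular integral, so no principal value or distributional correction appears; this is exactly what keeps the argument elementary.

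For the second step, Young's inequality with the kernel taken in $\LL^1$ gives $\norm{\left(\nabla \nabla K_{\alpha}\right)\star f}_{\LL^q}\leq \norm{\nabla \nabla K_{\alpha}}_{\LL^1}\norm{f}_{\LL^q}$, so the operator is bounded on $\LL^q(\T^3)$ for every $q>1$. The hypothesis $\tfrac{1}{r}>\tfrac{1}{q}+\tfrac{1}{2}$ forces in particular $r<q$, so on the finite-measure torus $\LL^q(\T^3)\hookrightarrow\LL^r(\T^3)$ and the claimed bound follows; alternatively, to exploit the full smoothing when $q\leq 2$, Young with the $\LL^2$ kernel maps $\LL^q$ into $\LL^{r_0}$ with $\tfrac{1}{r_0}=\tfrac{1}{q}-\tfrac{1}{2}$, and the stated relation between the exponents again guarantees $\LL^{r_0}(\T^3)\hookrightarrow\LL^r(\T^3)$. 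Either way, the main work is concentrated in the kernel estimate of the first step, while Young's inequality and the embedding are routine. I expect no serious obstacle beyond writing out the multiplier of $\nabla \nabla K_{\alpha}$ carefully and confirming the summability $\sum \abs{k}^{-4}<\infty$, which is where the three-dimensionality and the regularization enter.
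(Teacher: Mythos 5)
Your proof is correct, but it takes a genuinely different route from the paper's. The paper never estimates $\nabla \nabla K_{\alpha}$ itself: it uses the identity $(\indicator - \alpha^2 \Delta) K_{\alpha} = K$ to absorb the two derivatives into the Helmholtz regularization (via $\norm{\nabla\nabla g}_{\LL^r} \leq C \norm{(\indicator - \alpha^2\Delta) g}_{\LL^r}$), reducing the claim to the classical integrability of the \emph{bare} Biot--Savart kernel, $K \in \LL^p(\T^3)$ for $p < \nicefrac{3}{2}$ (cited from \cite{MR1636569}), followed by Young's inequality with the sharp relation $1 + \nicefrac{1}{r} = \nicefrac{1}{p} + \nicefrac{1}{q}$; there the exponent hypothesis is meant to match the integrability of the singular kernel. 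You instead use the $\alpha$-smoothing at full strength: the Fourier coefficients of $\nabla \nabla K_{\alpha}$ decay like $\abs{k}^{-2}$, which is square-summable over $\Z^3$, so the twice-differentiated kernel is an honest $\LL^2 \subset \LL^1$ function, the operator is bounded on every $\LL^q$, and the hypothesis $\nicefrac{1}{r} > \nicefrac{1}{q} + \nicefrac{1}{2}$ is consumed only by the trivial embedding $\LL^q(\T^3) \subset \LL^r(\T^3)$. Your version is more self-contained (no external lemma on $K$) and proves a strictly stronger mapping property; its only cost is a constant that degenerates as $\alpha \to 0$, but the paper's first step has the same feature and $\alpha$ is fixed throughout. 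Incidentally, your route also sidesteps a rough spot in the paper's exponent bookkeeping: under the stated hypothesis, $\nicefrac{1}{p} = 1 + \nicefrac{1}{r} - \nicefrac{1}{q} > \nicefrac{3}{2}$, so the Young exponent there satisfies $p < 1$ and the paper's Young step, as written, implicitly needs the same finite-measure embedding that you invoke explicitly.
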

\begin{proof}
Let us first notice that:
\[
\begin{aligned}
\left\| \nabla \left( \nabla \left( K_{\alpha} \star f\right) \right) \right\|_{\LL^{r}(\T^3)} & \leq C \left\| (\indicator - \alpha^2\Delta)^{1/2} \left( (\indicator - \alpha^2\Delta)^{1/2} \left( K_{\alpha} \star f\right) \right) \right\|_{\LL^{r}(\T^3)} \\
&=C\left\| (\indicator - \alpha^2\Delta)^{1}  \left( K_{\alpha}\right) \star f \right\|_{\LL^{r}(\T^3)} \\
& =  C\left\| K \star f \right\|_{\LL^{r}(\T^3)}.
\end{aligned}
\]
The first inequality holds because of the definition of $\HH^{1/2}_p$. Note that there exists $p < \frac{3}{2}$ such that:
\[
1+\frac{1}{r} = \frac{1}{p} + \frac{1}{q}.
\]
In fact, $p = \frac{1}{1+1/r-1/q} < 3/2$ if and only if $1/r-1/q > 1/2$.
Now, recalling:
\[
\begin{aligned}
K \star f (x) = \int_{\T^3}K(x-y)f(y)dy,
\end{aligned}
\]
and using Young inequality:
\[
\left\| K \star f \right\|_{\LL^r(\T^3)} \leq \left\|K\right\|_{\LL^p(\T^3)} \left\|f\right\|_{\LL^q(\T^3)} \leq C\left\|f\right\|_{\LL^q(\T^3)}.
\]
Here we used that for any $p < \frac{3}{2}$, the non-smoothed Biot-Savart kernel satisfies:
\[
K \in \LL^{p}(\T^3).
\]
See Chapter 4, Theorem 4.13 in \cite{MR1636569} for a proof that $K \in \LL^{p}(\T^3)$ when $p < \frac{3}{2}$.
\end{proof}

\subsection{Properties of the operator $A$}
Since:
\[
\div\left(\sigma \sigma^T + (\nabla \sigma) (\nabla \sigma)^T\right) = 0,
\]
the operator $A$ can be written in the following way:
\[
\left\langle A, f \right\rangle = \div(M \cdot \nabla f),
\]
for $M:\T^3 \to \R^{3 \times 3}$ defined as:
\[
M := \nu \text{Id} + \frac{1}{2}\sigma \sigma^T + \frac{1}{2}(\nabla \sigma) (\nabla \sigma)^T.
\]
We remark that:
\[
\div(M)=0,
\]
and
\[
\nu' \text{Id} \leq M \leq \nu'' \text{Id}.
\]
Now, as a consequence of Theorem 1 in \cite{sikora}, this tells us that $A$ is an operator with bounded imaginary part. Therefore if if $0<\varepsilon< m $, $1<p<+\infty$, and $\theta \in (0,1)$ the following two hold:
\begin{equation}
\label{eq:domain_A}
[X,D(A^{\alpha})]_{\theta}=D(A^{\theta \alpha}),
\end{equation}
and
\begin{equation}
\label{eq:sob_interpolation}
\HH^{\varepsilon}_p(\mathbb{R}^d)=[\LL^p(\mathbb{R}^d), \W^{m,p}(\mathbb{R}^d)]_{\frac{\varepsilon}{m}},
\end{equation}
where $[E,F]_{\theta}$ denotes the interpolation space between the two Banach spaces $E, F$. For a precise definition and basic properties about interpolation spaces, Theorem 11.6.1 in section 11.6 used to conclude equation \eqref{eq:domain_A} and Corollary 12.3.5 in section 12.3 used for equation \eqref{eq:sob_interpolation},
see \cite{fractional_operators}.
Therefore for $\theta \in (0,1)$:
\[
D( \left(\mathbb{I} - A \right)^{\theta/2}) = \HH^{\theta}_p(\T^3).
\]

\subsection{Technical estimates}

\begin{proof}[Proof of Lemma \ref{lem:compactness_noises}]
We recall that the following Sobolev embedding holds:
\[
\HH^{3+\alpha-6/p}_2(\T^3) \subset \HH^{\alpha}_p(\T^3).
\]
See \cite{hitchhiker} for the proof. Using the above Sobolev embedding result one has:
\[
\begin{aligned}
& \E \left[ \left\|\frac{1}{N} \sum_{i=1}^N \int_0^t(\indicator-A)^{\alpha / 2} e^{(t-s) A} \nabla_x \mathfrak{F}_s^i(x) dB_s^i\right\|_{\LL^p}^q \right] \\
& \quad \leqslant C \cdot \E \left[ \left\|\frac{1}{N} \sum_{i=1}^N \int_0^t(\indicator-A)^{\left(3+\alpha-\frac{6}{p}\right) / 2} e^{(t-s) A} \nabla_x \mathfrak{F}_s^i(x) dB_s^i\right\|_{\LL^2}^q \right].
\end{aligned}
\]
For the above inequality we used that the spatial differential operator $(\indicator - A)^{\beta}$ commutes with the integral $\int_0^t dB_s$. This is a straightforward consequence of Theorem 3.1.2 of \cite{kunita}.
Now let $0<\delta<1$ be such that:
\[
\frac{1}{2} \beta(3+\alpha - 6/p + \delta ) - 1 < 0.
\]
From the Burkholder-Davis-Gundy inequality (see \cite{bdg} for instance), we obtain:
\begin{equation}
\label{eq:estimating_ind.noises}
\begin{aligned}
& \E \left[ \left\|\frac{1}{N} \sum_{i=1}^N \int_0^t(\indicator-A)^{\left(3+\alpha-\frac{6}{p}\right) / 2} e^{(t-s) A}\nabla_x \mathfrak{F}_s^i(x) dB_s^i\right\|_{\LL^2}^q \right] \\
& \leq C_q \mathbb{E}\left[ \left( \frac{1}{N^2} \sum_{i=1}^N \int_0^t\left\|(\indicator-A)^{\left(3+\alpha-\frac{6}{p}\right) / 2} e^{(t-s) A} \nabla_x \mathfrak{F}_s^i(x) \right\|_{\mathbb{L}^2\left(\T^2\right)}^2 d s \right)^{q / 2} \right] \\
& = C_q \mathbb{E}\left[ \left( \frac{1}{N^2} \sum_{i=1}^N \int_0^t\left\|(\indicator -A)^{\frac{1-\delta}{2}} e^{(t-s) A} \right\|_{\LL^2 \to \LL^2}^2 \left\| (\indicator - \Delta)^{\left(3+\alpha-\frac{6}{p} + 1 - 1 + \delta \right) / 2} \mathfrak{F}_s^i(x) \right\|_{\mathbb{L}^2\left(\T^2\right)}^2 d s \right)^{q / 2} \right] \\
& = C_q \mathbb{E}\left[ \left( \frac{1}{N^2} \sum_{i=1}^N \int_0^t \frac{1}{(t-s)^{1-\delta}} \left\| (\indicator - \Delta)^{\left(3+\alpha-\frac{6}{p} + \delta \right) / 2} \left(V^N\left(x-X_s^{i, N}\right)\right) \right\|_{\mathbb{L}^2\left(\T^2\right)}^2 \left| \Phi^{i, N}_s h_0(X_0) \right|^2 ds \right)^{q / 2} \right] \\
\end{aligned}
\end{equation}
Since the following estimate holds:
\[
\left\| (\indicator - \Delta)^{\left(3+\alpha-\frac{6}{p} + \delta \right) / 2} V^N \right\|_{\mathbb{L}^2} \leq C_V N^{\beta(3+\alpha - 6/p + \delta)},
\]
we can deduce from equation \eqref{eq:estimating_ind.noises}:
\begin{equation}
    \begin{aligned}
        & \E \left[ \left\|\frac{1}{N} \sum_{i=1}^N \int_0^t(\mathrm{I}-A)^{\left(3+\alpha-\frac{6}{p}\right) / 2} e^{(t-s) A}\nabla_x \mathfrak{F}_s^i(x) dB_s^i\right\|_{\LL^2}^q \right] \\
        & \leq C_q \mathbb{E}\left[ \left( \frac{1}{N^2} \sum_{i=1}^N \int_0^t \frac{1}{(t-s)^{1-\delta}} \left\| (\indicator - \Delta)^{\left(3+\alpha-\frac{6}{p} + \delta \right) / 2} \left(V^N\left(x-X_s^{i, N}\right)\right) \right\|_{\mathbb{L}^2\left(\T^2\right)}^2 \left| \Phi^{i, N}_s h_0(X_0) \right|^2 ds \right)^{q / 2} \right] \\
        & \leq C_q N^{\frac{q}{2} \beta(3+\alpha - 6/p + \delta ) - q} \cdot \mathbb{E}\left[ \sup_{t \in [0, T]} \left( \sum_{i=1}^N \left| \Phi^{i, N}_s h_0(X_0) \right|^2 ds \right)^{q / 2} \right].
    \end{aligned}
\end{equation}
Therefore we are left with estimating:
\begin{equation}
\begin{aligned}
\mathbb{E}\left[ \sup_{t \in [0, T]} \left(  \sum_{i=1}^N \left| \Phi^{i, N}_t h_0(X_0) \right|^2 ds \right)^{q / 2} \right] = \mathbb{E}\left[ \sup_{t \in [0, T]} \left| \Phi^N \right|_2^{q} \right],
\end{aligned}
\end{equation}
where $\Phi^N_t \in (\R^{3})^N$ is defined as:
\[
(\Phi^N_t)_i := \Phi^{i, N}_th_0(X_0).
\]
From the Burkholder-Davis-Gundy inequality (see \cite{bdg} again)
\begin{equation}
\label{eq:bound_phi_N}
\begin{aligned}
& \mathbb{E}\left[ \sup_{t \in [0, T]} \left| \Phi^N_t \right|_2^{q} \right] \leq \mathbb{E}\left[ \sup_{t \in [0, T]} \left| \text{Id} \right|_2^{q} \right] + \mathbb{E}\left[ \left( \sum\limits_{i=1}^N\int_0^t \left| \left(\nabla \sigma \right) \Phi^{i, N}_s h_0(X_0)\right|^{2} ds \right)^{q/2} \right] \\
& \leq C + \left\| h_0 \right\|_{\LL^{\infty}}^q \left\| \nabla \sigma \right\|_{\LL^{\infty}}^q \mathbb{E}\left[ \left( \sum\limits_{i=1}^N\int_0^t \left| \Phi^{i, N}_s \right|^{2} ds \right)^{q/2} \right] \\
& = C + \left\| h_0 \right\|_{\LL^{\infty}}^q \left\| \nabla \sigma \right\|_{\LL^{\infty}}^q \E \left[ \left(   \int_0^t \left|\Phi^N_s \right|_2^{2} ds \right)^{q/2}
\right] \\
& \leq C + \left\| h_0 \right\|_{\LL^{\infty}}^q \left\| \nabla \sigma \right\|_{\LL^{\infty}}^q \E \left[ \int_0^t \left|\Phi^N_s \right|_2^{q}
 ds \right] \\
& = C + \left\| h_0 \right\|_{\LL^{\infty}}^q \left\| \nabla \sigma \right\|_{\LL^{\infty}}^q \int_0^t \E \left[ \left|\Phi^N_s \right|_2^{q}
\right] ds.
\end{aligned}
\end{equation}
Using the Gronwall's lemma one concludes that:
\begin{equation}
\label{eq:bound_Phi}
\sup\limits_{N \in \N}\mathbb{E}\left[ \sup_{t \in [0, T]} \left| \Phi^N_t \right|_2^{q} \right] < +\infty
\end{equation}
Now, because of the choice of $\beta$, there exists a $\delta > 0$ such that:
\[
N^{\frac{q}{2} \beta(3+\alpha - 6/p + \delta ) - q} \leq 1, \quad \text{ for all } N \in \N.
\]
The proof of the second inequality is similar and we omit it.
\end{proof}

\begin{lem}
\label{lem:banach_spaces_L1}
Let $f:[0, T] \times X \times Y \to \R^d$, and let $p, q > 1$. Then the following inequality holds:
\[
\left\| \int_0^T f(s, x, y) ds\right\|_{\LL^q_y(\LL^p_x)} \leq \int_{0}^T \left\| f(s, x, y) \right\|_{\LL^q_y(\LL^p_x)} ds
\]
\end{lem}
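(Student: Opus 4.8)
The plan is to reduce the statement to two successive applications of the classical Minkowski integral inequality, applied first in the $x$-variable (to the inner $\LL^p_x$ norm) and then in the $y$-variable (to the outer $\LL^q_y$ norm). Equivalently, one may recognise $\LL^q_y(\LL^p_x)$ as a Banach space and read the inequality as the triangle inequality for a Banach-space-valued integral; I will carry out the concrete version and mention the abstract one at the end.

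First I would fix $y$ and estimate the inner norm. By Minkowski's integral inequality applied to the family $s \mapsto f(s,\cdot,y) \in \LL^p_x$,
\[
\left\| \int_0^T f(s,x,y)\, ds \right\|_{\LL^p_x} \leq \int_0^T \left\| f(s,\cdot,y) \right\|_{\LL^p_x} ds =: G(y),
\]
valid for (almost) every $y$, where I write $G(y) = \int_0^T g(s,y)\,ds$ with $g(s,y) := \left\| f(s,\cdot,y) \right\|_{\LL^p_x}$. Taking the $\LL^q_y$ norm of this pointwise bound and using monotonicity of the $\LL^q_y$ norm on nonnegative functions gives
\[
\left\| \int_0^T f(s,x,y)\, ds \right\|_{\LL^q_y(\LL^p_x)} \leq \left\| G \right\|_{\LL^q_y}.
\]

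A second application of Minkowski's integral inequality, now to $s \mapsto g(s,\cdot) \in \LL^q_y$, then yields
\[
\left\| G \right\|_{\LL^q_y} = \left\| \int_0^T g(s,\cdot)\, ds \right\|_{\LL^q_y} \leq \int_0^T \left\| g(s,\cdot) \right\|_{\LL^q_y} ds = \int_0^T \left\| f(s,\cdot,\cdot) \right\|_{\LL^q_y(\LL^p_x)} ds,
\]
where the final equality is just the definition of the mixed norm. Chaining the three displays proves the claim.

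The only genuinely technical point, and the step I would treat most carefully, is measurability: to invoke Minkowski's inequality in the second stage I must know that $g(s,y) = \left\| f(s,\cdot,y) \right\|_{\LL^p_x}$ is jointly measurable in $(s,y)$, so that $G$ is a well-defined measurable function on $Y$. This follows from the joint measurability of $f$ together with Tonelli's theorem, since $(s,y) \mapsto \int_X |f(s,x,y)|^p\,dx$ is measurable and hence so is its $1/p$ power. I note that the same argument can be phrased intrinsically: the space $\LL^q_y(\LL^p_x)$ is a Banach space, and the inequality is the standard bound $\left\| \int_0^T F(s)\,ds \right\| \leq \int_0^T \left\| F(s) \right\| ds$ for the Bochner integral of $F(s) := f(s,\cdot,\cdot)$, in which case the only thing to check is Bochner integrability of $F$ — again a matter of measurability and finiteness of the right-hand side.
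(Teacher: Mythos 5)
Your proof is correct, but it takes a genuinely different route from the paper. The paper's proof is two lines: it observes the inequality for functions that are piecewise constant in time, $f(s,x,y)=\sum_{i} f_{t_i}(x,y)\indicator_{[t_i,t_{i+1}]}(s)$ — for which the integral is a finite sum and the bound is just the triangle inequality in the Banach space $\LL^q_y(\LL^p_x)$ — and then "concludes by a density argument." That is precisely the abstract Bochner-integral argument you sketch in your closing paragraph: density of simple functions is the definition of the Bochner integral, and the inequality $\left\| \int_0^T F(s)\,ds\right\| \leq \int_0^T \left\|F(s)\right\| ds$ passes to the limit. Your main argument, by contrast, is the concrete one: Minkowski's integral inequality applied twice, first in $x$ for fixed $y$, then in $y$ to the function $g(s,y)=\left\|f(s,\cdot,y)\right\|_{\LL^p_x}$, with a Tonelli argument supplying joint measurability of $g$. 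Each approach buys something. The paper's route is shorter and works verbatim in any Banach space, but as written it leaves implicit exactly what you identified as delicate: in which topology the step functions are dense (one needs $F(s)=f(s,\cdot,\cdot)$ to be strongly measurable, hence a.e.\ separably valued, plus the a.e.\ identification of the Bochner integral with the pointwise-in-$(x,y)$ integral). Your double-Minkowski route avoids strong measurability and separability of $\LL^q_y(\LL^p_x)$ altogether, requiring only joint measurability of $f$, and it is valid for all $p,q\geq 1$, not just $p,q>1$; in the paper's setting ($X,Y$ tori or intervals) the distinction is immaterial, but your version is the more robust statement.
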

\begin{proof}
It holds for the functions of the form:
\[
f(s, x, y) = \sum\limits_{i=0}^{n +1 } f_{t_i}(x, y) \indicator_{[t_i, t_{i+1}]}(s), \quad \text{ for } 0=t_0<\dots<t_{n+1}=T.
\]
Therefore we conclude by a density argument.
\end{proof}

\begin{proof}[Proof of Lemma \ref{lem:compact_advection_term}]
Let us start with \eqref{eq:mild_term_1}. By Lemma \ref{lem:banach_spaces_L1}:
\[
\begin{aligned}
& \E \left[ \left\| \int_0^t (\indicator - A)^{\frac{\alpha}{2}}e^{(t-s)A} \left( \nabla V^N \star \left( K_{\alpha} \star g_s^{N}  \right) \mu_s^N \right) ds \right\|_{\LL^p}^q \right]^{1/q} \\
&\leq \int_0^t \E \left[ \left\| (\indicator - A)^{\frac{\alpha}{2}}e^{(t-s)A} \left( \nabla V^N \star \left( K_{\alpha} \star g_s^{N}  \right) \mu_s^N \right) \right\|_{\LL^p}^q \right]^{1/q} ds
\end{aligned}
\]
Now, let $0<\delta<1$ such that $\alpha/2 +\delta - 1/2 \leq 0$. Then:
\[
\begin{aligned}
& \int_0^t \E \left[ \left\| (\indicator - A)^{\frac{\alpha}{2}}e^{(t-s)A} \left( \nabla V^N \star \left( K_{\alpha} \star g_s^{N}  \right) \mu_s^N \right) \right\|_{\LL^p}^q \right]^{1/q} ds \\
&= \int_0^t \E \left[ \left\| (\indicator - A)^{1-\delta}e^{(t-s)A}(\indicator - A)^{\frac{\alpha}{2} + \delta - 1} \left( \nabla V^N \star \left( K_{\alpha} \star g_s^{N}  \right) \mu_s^N \right) \right\|_{\LL^p}^q \right]^{1/q} ds \\
& \leq \int_0^t \frac{1}{(t-s)^{1-\delta}}\E \left[ \left\| (\indicator - A)^{\frac{\alpha}{2} + \delta - 1} \left( \nabla V^N \star \left( K_{\alpha} \star g_s^{N}  \right) \mu_s^N \right) \right\|_{\LL^p}^q \right]^{1/q} ds \\
& \leq C \int_0^t \frac{1}{(t-s)^{1-\delta}}\E \left[ \left\| (\indicator - A)^{\frac{\alpha}{2} + \delta - \frac{1}{2}} \left( V^N \star \left( K_{\alpha} \star g_s^{N}  \right) \mu_s^N \right) \right\|_{\LL^p}^q \right]^{1/q} ds \\
& \leq C \int_0^t \frac{1}{(t-s)^{1-\delta}}\E \left[ \left\| V^N \star \left( K_{\alpha} \star g_s^{N}  \right) \mu_s^N\right\|_{\LL^p}^q \right]^{1/q} ds
\end{aligned}
\]
Now one has:
\[
\begin{aligned}
\bigg| V^N \star \left( K_{\alpha} \star g_s^{N}  \right) \mu_s^N (x)\bigg| & = \bigg| \frac{1}{N} \sum\limits_{i=1}^N V^N(x-X_{s}^{i, N}) K_{\alpha} \star g_s^{N}(X^{i, N}_s) \Phi^{i, N}_s h_0(X_0) \bigg| \\
& \leq \left\| K_{\alpha} \star g_s^{N} \right\|_{\LL^{\infty}} \frac{1}{N} \sum\limits_{i=1}^N V^N(x-X_{s}^{i, N}) \bigg|\Phi^{i, N}_s h_0(X_0) \bigg|,
\end{aligned}
\]
and therefore:
\[
\begin{aligned}
& \left(\int_{\T^3}\bigg| V^N \star \left( K_{\alpha} \star g_s^{N}  \right) \mu_s^N (x)\bigg|^p dx \right)^{1/p} = \left( \int_{\T^3}\bigg| \frac{1}{N} \sum\limits_{i=1}^N V^N(x-X_{s}^{i, N}) K_{\alpha} \star g_s^{N}(X^{i, N}_s) \Phi^{i, N}_s h_0(X_0) \bigg|^p dx \right)^{1/p} \\
& \leq \left\| K_{\alpha} \star g_s^{N} \right\|_{\LL^{\infty}} \frac{1}{N} \left( \int_{\T^3} \left( \sum\limits_{i=1}^N V^N(x-X_{s}^{i, N}) \bigg|\Phi^{i, N}_s h_0(X_0) \bigg| \right)^p dx \right)^{1/p} \\
& \leq \left\| K_{\alpha} \star g_s^{N} \right\|_{\LL^{\infty}} \left( \int_{\T^3} \left( \sum\limits_{i=1}^N  \frac{1}{N^2} (\indicator-A)^{\left(3-\frac{6}{p}\right) / 2} V^N(x-X_{s}^{i, N}) \bigg|\Phi^{i, N}_s h_0(X_0) \bigg| \right)^2 dx \right)^{1/2} \\
&= \left\| K_{\alpha} \star g_s^{N} \right\|_{\LL^{\infty}} \frac{1}{N} \left(\sum\limits_{i=1}^N \bigg|\Phi^{i, N}_s h_0(X_0) \bigg|^2 \right)^{1/2}\left\| (\indicator-A)^{\left(3-\frac{6}{p}\right) / 2} V^N \right\|_{\LL^2} \\
& \leq \left\| K_{\alpha} \star g_s^{N} \right\|_{\LL^{\infty}} N^{\beta \left(3-\frac{6}{p}\right) - 1} \left(\sum\limits_{i=1}^N \bigg|\Phi^{i, N}_s h_0(X_0) \bigg|^2 \right)^{1/2}
\end{aligned}
\]
Therefore we conclude:
\[
\begin{aligned}
    &\E \left[ \left\| V^N \star \left( K_{\alpha} \star g_s^{N}  \right) \mu_s^N\right\|_{\LL^p}^q \right]^{1/q} ds \\
    &\leq \E \left[ \left\| K_{\alpha} \star g_s^{N} \right\|_{\LL^{\infty}}^q \right]^{1/q} N^{\beta \left(3-\frac{6}{p}\right) - 1} \E \left[ \left( \sum\limits_{i=1}^N \bigg|\Phi^{i, N}_s h_0(X_0) \bigg|^2 \right)^{q/2} \right]^{1/q}
    \leq C \E \left[ \left\| K_{\alpha} \star g_s^{N} \right\|_{\LL^{\infty}}^q \right]^{1/q}
\end{aligned}
\]
where for the last inequality we used the computation made in \eqref{eq:bound_phi_N}.

Now, for \eqref{eq:mild_term_2} one gets:
\[
\begin{aligned}
    \E \left[ \left\| V^N \star \left( K_{\alpha} \star g_s^{N}  \right) \mu_s^N\right\|_{\LL^p}^q \right]^{1/q} ds \\
    \leq C \E \left[ \left\| \nabla K_{\alpha} \star g_s^{N} \right\|_{\LL^{\infty}}^q \right]^{1/q}
\end{aligned}
\]
We conclude using Lemma \ref{lem:property_of_Kalpha}.
\end{proof}

\begin{lem}
\label{lem:property_of_Kalpha}
Given a $p>1$, there exists a positive constant $C$ that is universal such that for every function $f$:
\begin{equation}
    \begin{aligned}
        \left\| \nabla K_{\alpha} \star f \right\|_{\LL^{\infty}} \leq C \left\| f\right\|_{\LL^{p}},
    \end{aligned}
\end{equation}
and:
\begin{equation}
    \begin{aligned}
        \left\| K_{\alpha} \star f \right\|_{\LL^{\infty}} \leq C \left\| f\right\|_{\LL^{p}},
    \end{aligned}
\end{equation}
\end{lem}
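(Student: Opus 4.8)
The plan is to derive both bounds from Young's inequality for convolutions, the only point of care being that the convolution $\star$ carries a cross product. Since $\abs{a \wedge b} \leq \abs{a}\,\abs{b}$ for all $a,b \in \R^3$, one has the pointwise bound $\abs{(F \star f)(x)} \leq \int_{\T^3} \abs{F(x-y)}\,\abs{f(y)}\,dy$, so the cross product is harmless and every estimate reduces to an ordinary scalar convolution inequality. Writing $p'$ for the conjugate exponent of $p$ (so that $\tfrac1p + \tfrac1{p'} = 1$, and note that $p>1$ forces $p' < \infty$), Young's inequality gives $\norm{F \star f}_{\LL^{\infty}} \leq \norm{F}_{\LL^{p'}}\,\norm{f}_{\LL^p}$. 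Thus it suffices to show $K_{\alpha} \in \LL^{p'}(\T^3)$ for the second estimate and $\nabla K_{\alpha} \in \LL^{p'}(\T^3)$ for the first.

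The second estimate is immediate: it was already shown above that $K_{\alpha} \in \LL^{\infty}(\T^3)$, and since $\T^3$ has finite measure, $\LL^{\infty}(\T^3) \subset \LL^{p'}(\T^3)$ for every $p' \in [1,\infty]$. Hence $\norm{K_{\alpha} \star f}_{\LL^{\infty}} \leq \norm{K_{\alpha}}_{\LL^{p'}}\,\norm{f}_{\LL^p} \leq C\,\norm{f}_{\LL^p}$.

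For the gradient estimate the subtle point — and the main obstacle — is that one cannot simply repeat the pointwise argument used for $K_{\alpha}$: differentiating multiplies the $k$-th Fourier coefficient of $K_{\alpha}$ by $ik$, producing coefficients of magnitude $\lesssim \tfrac{\abs{k}^2}{\abs{k}^3(1+\alpha^2\abs{k}^2)} \lesssim \abs{k}^{-3}$, and in three dimensions $\sum_{k \neq 0} \abs{k}^{-3}$ diverges (logarithmically), so $\nabla K_{\alpha}$ need not lie in $\LL^{\infty}$. The remedy is to place $\nabla K_{\alpha}$ in every $\LL^{p'}$ with $p' < \infty$, which is exactly what the hypothesis $p>1$ provides. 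From the decay $\abs{c_k(\nabla K_{\alpha})} \lesssim \abs{k}^{-3}$ one checks that $\sum_{k}(1+\abs{k}^2)^{s}\abs{c_k(\nabla K_{\alpha})}^2 < \infty$ for every $s < \tfrac32$, so $\nabla K_{\alpha} \in \HH^{s}_2(\T^3)$ for all $s < \tfrac32$. The Sobolev embedding $\HH^{s}_2(\T^3) \hookrightarrow \LL^{p'}(\T^3)$, valid whenever $\tfrac1{p'} \geq \tfrac12 - \tfrac{s}{3}$ (see \cite{hitchhiker}), then reaches any prescribed finite $p'$: for $p' \leq 2$ one already has $\HH^{s}_2 \subset \LL^2 \subset \LL^{p'}$, while for $p' > 2$ one picks $s = \tfrac32 - \tfrac{3}{p'} \in (0,\tfrac32)$. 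In either case $\nabla K_{\alpha} \in \LL^{p'}(\T^3)$, and Young's inequality closes the argument. I expect the only genuine difficulty to be this borderline loss of boundedness, circumvented precisely because the conjugate exponent stays finite.
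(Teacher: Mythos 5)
Your proof is correct, and it takes a genuinely different route from the paper's. The paper disposes of this lemma in one line as a ``straight consequence'' of Proposition \ref{th:continuity_of_K_operator}: there the smoothing $(\indicator-\alpha^2\Delta)^{-1}$ is traded for two derivatives via $(\indicator-\alpha^2\Delta)K_{\alpha}=K$, so that $\norm{\nabla\nabla(K_{\alpha}\star f)}_{\LL^r} \leq C \norm{K\star f}_{\LL^r}$, which is then handled by Young's inequality against the singular kernel $K\in\LL^{p_0}(\T^3)$, $p_0<3/2$; the $\LL^{\infty}$ bounds follow by applying a Sobolev embedding to $K_{\alpha}\star f$ itself (this is the chain written out explicitly inside the proof of Lemma \ref{lem:conv_prob_1}). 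You instead spend the regularity on the kernel rather than on the convolution: from the Fourier decay $\abs{c_k(\nabla K_{\alpha})}\lesssim \abs{k}^{-3}$ you place $\nabla K_{\alpha}\in\HH^{s}_2(\T^3)$ for every $s<3/2$, push it into $\LL^{p'}(\T^3)$ for every finite $p'$ by Sobolev embedding, and conclude with the dual Young/H\"older bound $\norm{F\star f}_{\LL^{\infty}}\leq\norm{F}_{\LL^{p'}}\norm{f}_{\LL^{p}}$; the cross product inside $\star$ is correctly neutralized by $\abs{a\wedge b}\leq\abs{a}\,\abs{b}$. Both arguments ultimately rest on the same borderline counting (coefficients of order $\abs{k}^{-3}$ in dimension three), but yours buys something the paper's citation does not literally deliver: the bound for \emph{every} $p>1$ as the lemma states. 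Running the paper's route to an $\LL^{\infty}$ gradient bound requires the embedding $\W^{1,r}\hookrightarrow\LL^{\infty}(\T^3)$, hence $r>3$, and Young against $K\in\LL^{p_0}$, $p_0<3/2$, then only closes for $p>3/2$ (indeed the exponent condition in Proposition \ref{th:continuity_of_K_operator} as printed even forces $r<2$); this is harmless for the paper, which applies the lemma with $p>6$, but your kernel-side computation covers the full stated range, and since on the finite-measure torus $\norm{f}_{\LL^{p}}$ is monotone in $p$ the small-$p$ case is the genuinely stronger claim. Two minor points: the constant depends on $p$ (through $p'$) and on $\alpha$, so ``universal'' must be read accordingly in either proof; and the identity $\nabla(K_{\alpha}\star f)=(\nabla K_{\alpha})\star f$, which you use implicitly, deserves a word (it holds distributionally, and classically once $\nabla K_{\alpha}\in\LL^{p'}$ is in hand), though this is routine.
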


\begin{proof}
Straight consequence of Proposition \ref{th:continuity_of_K_operator}.
\end{proof}

\begin{lem}
\label{lem:-2comma2_case}
For every $q' \geq 2$ there exists a positive constant $C$ such that the following holds:
\[
\E \left[ \left\|\frac{1}{N} \sum\limits_{i = 1}^N V^N(x-X^{i, N}_s) u^{N}(X^{i, N}_s) \Phi^{i, N}_sh_0(X_0^i) \right\|_{\HH^{-1}_2(\T^3)}^{q'}\right] \leq C
\]
\end{lem}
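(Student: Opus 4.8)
The plan is to dominate the $\HH^{-1}_2$-norm by an $\LL^p$-norm of an object that has already been estimated. First I would recall from Section~\ref{sec:particle_system} the identity $u^N = K_\alpha \star g^N$. Consequently the process under consideration is nothing but a mollified, velocity-weighted empirical measure,
\[
F^N_s(x) := \frac{1}{N}\sum_{i=1}^N V^N(x - X^{i,N}_s)\,u^N(X^{i,N}_s)\,\Phi^{i,N}_s h_0(X_0^i) = V^N \star \big( (K_\alpha \star g^N_s)\,\mu^N_s \big)(x),
\]
where the juxtaposition $u^N(X^{i,N}_s)\,\Phi^{i,N}_s h_0(X_0^i)$ is the (matrix-valued) outer product. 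Up to this component-wise tensor contraction, which leaves every estimate below unchanged, this is exactly the object whose $\LL^p$-norm is controlled inside the proof of Lemma~\ref{lem:compact_advection_term}.

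Second, I would reduce the negative-order norm to an $\LL^p$-norm by elementary embeddings on the torus. Since $\T^3$ has finite measure and $p>6>2$, we have $\LL^p(\T^3)\hookrightarrow\LL^2(\T^3)$; moreover the Fourier characterisation yields
\[
\|f\|_{\HH^{-1}_2(\T^3)}^2 = \sum_{k\in\Z^3}\frac{|c_k(f)|^2}{1+|k|^2} \leq \sum_{k\in\Z^3}|c_k(f)|^2 = \|f\|_{\LL^2(\T^3)}^2 .
\]
Combining the two gives $\|F^N_s\|_{\HH^{-1}_2(\T^3)} \leq \|F^N_s\|_{\LL^2(\T^3)} \leq C\,\|F^N_s\|_{\LL^p(\T^3)}$, with $C$ depending only on $p$ and on $|\T^3|$.

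Third, I would invoke the $\LL^p$-estimate established in the proof of Lemma~\ref{lem:compact_advection_term}. That computation, applied verbatim to $F^N_s = V^N \star ((K_\alpha\star g^N_s)\mu^N_s)$, bounds the $\LL^p$-norm pointwise in $s$ by $\|K_\alpha\star g^N_s\|_{\LL^\infty}$ times the factor $N^{\beta(3-6/p)-1}\,(\sum_i |\Phi^{i,N}_s h_0(X_0^i)|^2)^{1/2}$; the moment bound \eqref{eq:bound_Phi} for the $\Phi$-process together with the constraint $\beta(3-6/p)<1$, which follows from $\beta<\tfrac{1}{3+\alpha-6/p}$ and $\alpha>0$, keep the $N$-dependent prefactor bounded. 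One therefore obtains, uniformly in $N$ and $s\in[0,T]$,
\[
\E\big[\|F^N_s\|_{\LL^p(\T^3)}^{q'}\big]^{1/q'} \leq C\,\E\big[\|K_\alpha\star g^N_s\|_{\LL^\infty}^{q'}\big]^{1/q'} .
\]

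Finally, I would close the argument using Lemma~\ref{lem:property_of_Kalpha}, which gives $\|K_\alpha\star g^N_s\|_{\LL^\infty}\leq C\|g^N_s\|_{\LL^p}$, followed by the embedding $\HH^\alpha_p(\T^3)\subset\LL^p(\T^3)$ and Proposition~\ref{thm:compact_1}, which together yield $\sup_{N}\sup_{s\in[0,T]}\E[\|g^N_s\|_{\LL^p}^{q'}]\leq C$. Chaining the three displays proves the claim. The substantive work has already been isolated in Lemma~\ref{lem:compact_advection_term}, so no genuine obstacle remains; the only point worth flagging is that one must pass through the weaker bound $\LL^p\hookrightarrow\HH^{-1}_2$ rather than estimate the $\HH^{-1}_2$-norm of the mollified measure directly, since a term-by-term Fourier bound on $F^N_s$ would lose the high-frequency summability $\sum_{k}(1+|k|^2)^{-1}$ in dimension three.
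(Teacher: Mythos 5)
Your proposal is correct, but it takes a different (and more economical) route than the paper. The paper's own proof is a self-contained duality argument: it pairs the mollified, velocity-weighted empirical measure with a test function $\psi$, uses $\norm{u^N}_{\LL^{\infty}}$ to pull out the velocity, bounds $\int_{\T^3} V^N(x-X^{i,N}_s)\abs{\psi(x)}\,dx \leq \norm{\psi}_{\LL^2}\norm{V^N}_{\LL^2}$ with $\norm{V^N}_{\LL^2}\leq C N^{3\beta/2}$, and gains $N^{-1/2}$ from Cauchy--Schwarz over the particle index, arriving at the prefactor $N^{3\beta/2-1/2}$, which is negative-exponent because $\beta < \frac{1}{3+\alpha-6/p} < \frac{1}{3}$; it then closes exactly as you do, splitting $\norm{K_{\alpha}\star g^N_s}_{\LL^{\infty}}$ from the $\Phi$-factor by Cauchy--Schwarz in expectation and invoking Lemma \ref{lem:property_of_Kalpha}, Proposition \ref{thm:compact_1} and the bound \eqref{eq:bound_Phi}. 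You instead interpose the embedding chain $\norm{\cdot}_{\HH^{-1}_2}\leq \norm{\cdot}_{\LL^2}\leq C\norm{\cdot}_{\LL^p}$ and outsource the whole $\LL^p$ estimate to the computation already carried out inside the proof of Lemma \ref{lem:compact_advection_term}, ending with the prefactor $N^{\beta(3-6/p)-1}$, likewise negative-exponent under the paper's standing constraint on $\beta$. What your route buys is that no new computation is needed --- the lemma is exposed as strictly weaker than what Lemma \ref{lem:compact_advection_term} already contains; what the paper's route buys is independence from that lemma (its duality estimate uses only $\norm{V^N}_{\LL^2}$, no Sobolev embedding). Two points you gloss, both harmless: the splitting of the product $\norm{K_{\alpha}\star g^N_s}_{\LL^{\infty}}\bigl(\sum_i \abs{\Phi^{i,N}_s h_0(X_0^i)}^2\bigr)^{1/2}$ inside the $q'$-th moment requires a Cauchy--Schwarz step in expectation, which the paper performs explicitly; and the identification $u^N(X^{i,N}_s) = K_{\alpha}\star g^N_s(X^{i,N}_s)$ ignores the missing self-interaction term $j=i$ --- but the paper makes the identical identification in its own proof, so you are consistent with its conventions.
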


\begin{proof}
Let $\psi \in \HH^{1}_2(\T^3)$ be a test function to compute the norm $\left\|\cdot \right\|_{\HH^{-1}_2(\T^3)}$:
\[
\begin{aligned}
& \left| \int_{\T^3} \frac{1}{N} \sum\limits_{i = 1}^N V^N(x-X^{i, N}_s) u^{N}(X^{i, N}_s) \Phi^{i, N}_sh_0(X_0^i) \psi(x) dx \right| \\
& \leq \int_{\T^3} \frac{1}{N} \sum\limits_{i = 1}^N V^N(x-X^{i, N}_s)  \left|u^{N}(X^{i, N}_s)\right|\left|\Phi^{i, N}_sh_0(X_0^i)\right| \left| \psi(x) \right| dx \\
& \leq \left\| u^N \right\|_{\LL^{\infty}} \frac{1}{N} \sum\limits_{i = 1}^N \int_{\T^3} V^N(x-X^{i, N}_s)  \left| \psi(x) \right| dx \left|\Phi^{i, N}_sh_0(X_0^i)\right| \\
& \leq \left\| u^N \right\|_{\LL^{\infty}} \left\| \psi \right\|_{\LL^{2}} \left\| V^N \right\|_{\LL^{2}} \frac{1}{N} \sum\limits_{i = 1}^N \left|\Phi^{i, N}_sh_0(X_0^i)\right| \\
& \leq  \left\| K_{\alpha} \star g^N_s \right\|_{\LL^{\infty}} \left\| \psi \right\|_{\LL^{2}} \left\| V^N \right\|_{\LL^{2}} \frac{N^{1/2}}{N} \left( \sum\limits_{i = 1}^N \left|\Phi^{i, N}_sh_0(X_0^i)\right|^2 \right)^{1/2} \\
& \leq \left\| K_{\alpha} \star g^N_s \right\|_{\LL^{\infty}} \left\| \psi \right\|_{\LL^{2}}  \frac{N^{1/2+ 3\beta/2}}{N} \left( \sum\limits_{i = 1}^N \left|\Phi^{i, N}_sh_0(X_0^i)\right|^2 \right)^{1/2},
\end{aligned}
\]
which implies:
\[
\left\|\frac{1}{N} \sum\limits_{i = 1}^N V^N(x-X^{i, N}_s) u^{N}(X^{i, N}_s) \Phi^{i, N}_sh_0(X_0^i) \right\|_{\HH^{-1}_2(\T^3)} \leq \left\| K_{\alpha} \star g^N_s \right\|_{\LL^{\infty}} N^{-1/2+ 3\beta/2}\left( \sum\limits_{i = 1}^N \left|\Phi^{i, N}_sh_0(X_0^i)\right|^2 \right)^{1/2}
\]
and therefore:
\[
\begin{aligned}
&\E \left[ \left\|\frac{1}{N} \sum\limits_{i = 1}^N V^N(x-X^{i, N}_s) u^{N}(X^{i, N}_s) \Phi^{i, N}_sh_0(X_0^i) \right\|_{\HH^{-1}_2(\T^3)}^{q'}\right] \\
&\leq N^{-q'/2+ 3q'\beta/2} \E \left[\left\| K_{\alpha} \star g^N_s \right\|_{\LL^{\infty}}^{q'} \left( \sum\limits_{i = 1}^N \left|\Phi^{i, N}_sh_0(X_0^i)\right|^2 \right)^{q'/2} \right] \\
& \leq N^{-q'/2+ 3q'\beta/2} \E \left[\left\| K_{\alpha} \star g^N_s \right\|_{\LL^{\infty}}^{2q'}\right]^{1/2} \E \left[\left( \sum\limits_{i = 1}^N \left|\Phi^{i, N}_sh_0(X_0^i)\right|^2 \right)^{2q'/2}\right]^{1/2} \\
& \leq N^{-q'/2+ 3q'\beta/2} \E \left[\left\| g^N_s \right\|_{\LL^{p}}^{2q'}\right]^{1/2} \E \left[\left( \sum\limits_{i = 1}^N \left|\Phi^{i, N}_sh_0(X_0^i)\right|^2 \right)^{2q'/2}\right]^{1/2} \leq C
\end{aligned}
\]
where for the last inequality we used the inequality \eqref{eq:bound_Phi}, the Theorem \ref{thm:compact_1}, and the fact that $3\beta/2 - 1/2 < 0$.
\end{proof}

\begin{lem}
\label{lem:-2comma2_case_noise}
For every $q' \geq 2$ there exists a positive constant $C$ such that the following holds:
\[
\E \left[ \left\|\frac{1}{N} \sum\limits_{i = 1}^N \int_s^t \nabla V^N(x-X^{i, N}_s) \sigma (X^{i, N}_s)\Phi^{i, N}_sh_0(X_0^i) dB_s^i\right\|_{\HH^{-2}_2(\T^3)}^{q'}\right] \leq C(t-s)^{q'/2}
\]
\end{lem}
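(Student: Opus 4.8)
The plan is to regard the displayed quantity as an It\^o integral taking values in the Hilbert space $\HH^{-2}_2(\T^3)$ and to apply the Burkholder--Davis--Gundy inequality there, reducing the $q'$-th moment to a moment of its quadratic variation; the decay in $N$ will then be produced by the mean-field prefactor $\tfrac1N$ together with the negative smoothness of the target space, exactly as the factor $N^{3\beta/2-1/2}$ arises in Lemma~\ref{lem:-2comma2_case}. To lighten notation I write $r$ for the integration variable (the statement reuses $s$ as both the lower limit and the dummy variable) and set
\[
\mathcal{J}^N_{s,t}(x):=\frac{1}{N}\sum_{i=1}^N\int_s^t \grad V^N\!\left(x-X^{i,N}_r\right)\sigma\!\left(X^{i,N}_r\right)\Phi^{i,N}_r h_0(X_0^i)\,dB^i_r ,
\]
which, for each fixed realization and each $t$, is a smooth, hence $\HH^{-2}_2$, function of $x$.

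First I would isolate the crucial scaling estimate for the kernel in the negative-order space. By Parseval and the elementary inequality $\tfrac{|k|^2}{(1+|k|^2)^2}\le 1$ for all $k\in\Z^3$, for every $y\in\T^3$ one has
\[
\norm{\grad V^N(\cdot-y)}_{\HH^{-2}_2}^2=\sum_{k\in\Z^3}\frac{|k|^2}{(1+|k|^2)^2}\,\abs{c_k(V^N)}^2\le\sum_{k\in\Z^3}\abs{c_k(V^N)}^2=\norm{V^N}_{\LL^2}^2 ,
\]
and the scaling $V^N(x)=N^{3\beta}V(N^\beta x)$ gives $\norm{V^N}_{\LL^2}=N^{3\beta/2}\norm{V}_{\LL^2}$. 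Hence the two negative derivatives of $\HH^{-2}_2$ more than compensate the single gradient, leaving only the mild factor $N^{3\beta/2}$; this is the heart of the matter, since measuring the same kernel in $\LL^2$ would cost an extra derivative and break the estimate.

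Next, since the $B^i$ are independent and $\HH^{-2}_2$ is a separable Hilbert space, the Burkholder--Davis--Gundy inequality (see \cite{bdg}) gives
\[
\E\!\left[\norm{\mathcal{J}^N_{s,t}}_{\HH^{-2}_2}^{q'}\right]\le C_{q'}\,\E\!\left[\left(\frac{1}{N^2}\sum_{i=1}^N\int_s^t\norm{\grad V^N(\cdot-X^{i,N}_r)\,\sigma(X^{i,N}_r)\,\Phi^{i,N}_r h_0(X_0^i)}_{\HH^{-2}_2}^2 dr\right)^{q'/2}\right].
\]
The factors $\sigma(X^{i,N}_r)$ and $\Phi^{i,N}_r h_0(X_0^i)$ are constant in the spatial variable and so leave the $\HH^{-2}_2$-norm as the scalar $\norm{\sigma}_{\LL^\infty}\,\abs{\Phi^{i,N}_r h_0(X_0^i)}$; inserting the scaling estimate above, each summand is at most $C\,N^{3\beta}\norm{\sigma}_{\LL^\infty}^2\abs{\Phi^{i,N}_r h_0(X_0^i)}^2$. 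Bounding $\int_s^t(\cdots)\,dr\le(t-s)\sup_{r\in[0,T]}(\cdots)$ extracts the factor $(t-s)^{q'/2}$, and the remaining moment of $\sup_r\sum_{i}\abs{\Phi^{i,N}_r h_0(X_0^i)}^2$ is controlled by the uniform bound \eqref{eq:bound_Phi}. Collecting all powers of $N$, the right-hand side is of the form $C\,N^{q'(3\beta/2-1/2)}(t-s)^{q'/2}$, precisely the exponent met in Lemma~\ref{lem:-2comma2_case}; the constraint $\beta<\tfrac{1}{3+\alpha-6/p}<\tfrac13$ makes $3\beta/2-1/2<0$, so $N^{q'(3\beta/2-1/2)}\le 1$ uniformly in $N$ and the claim follows.

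The main obstacle is the bookkeeping of scales: one must verify that the negative smoothness of $\HH^{-2}_2$ reduces the cost of $\grad V^N$ to exactly $N^{3\beta/2}$, so that this growth is defeated by the $N^{-1/2}$ surplus of the mean-field normalization $\tfrac1N$ acting against the $O(N^{1/2})$ Euclidean size of the vector $(\Phi^{i,N}_r h_0(X_0^i))_i$. A secondary point is to carry out the Hilbert-space Burkholder--Davis--Gundy step correctly: independence of the $B^i$ must kill all off-diagonal ($i\neq j$) contributions to the quadratic variation, while the finite multidimensionality of each $B^i$ affects only the constant. Finally, it is essential to extract the sharp time factor $(t-s)^{q'/2}$ rather than $(t-s)^{q'}$, since this is the scaling needed to feed the estimate into the fractional-Sobolev-in-time (tightness) bound of Proposition~\ref{thm:compact_2}.
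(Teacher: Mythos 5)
Your proof is correct and follows essentially the same route as the paper: a Hilbert-space Burkholder--Davis--Gundy bound in $\HH^{-2}_2(\T^3)$, the kernel scaling $\norm{V^N}_{\LL^2}=N^{3\beta/2}\norm{V}_{\LL^2}$ to let the negative-order norm absorb the gradient, the uniform moment bound \eqref{eq:bound_Phi} on the vector $(\Phi^{i,N}_r h_0(X_0^i))_i$, and the constraint $\beta<\tfrac13$ to make the resulting power of $N$ nonpositive, with the time increment extracted exactly as needed for Proposition \ref{thm:compact_2}. The only (harmless) difference is cosmetic: you bound $\norm{\nabla V^N(\cdot-y)}_{\HH^{-2}_2}$ directly by Parseval, whereas the paper trades the gradient for one order of negative smoothness, reducing to the $\HH^{-1}_2$-norm of the undifferentiated kernel, and then repeats the duality (test-function) argument of Lemma \ref{lem:-2comma2_case}.
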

\begin{proof}
From the Burkholder-Davis-Gundy inequality (see \cite{bdg}):
\[
\begin{aligned}
& \E \left[ \left\|\frac{1}{N} \sum\limits_{i = 1}^N \int_s^t \nabla V^N(x-X^{i, N}_r) \sigma (X^{i, N}_r)\Phi^{i, N}_r h_0(X_0^i) dB_s^i\right\|_{\HH^{-2}_2(\T^3)}^{q'}\right] \\
& \leq C \mathbb{E}\left[\frac{1}{N^2} \sum_{i=1}^N \int_s^t\left\|\nabla V^N\left(x-X_r^{i, N}\right)\sigma(X^{i, N}_r)\Phi^{i, N}_r h_0(X_0^i)\right\|_{\HH^{-2}_2(\T^3)}^{2} d r\right]^{q^{\prime} / 2}.
\end{aligned}
\]
Therefore we are left with proving that there exists a positive constant $C$ such that:
\[
\frac{1}{N^{q'}} \sum_{i=1}^N \mathbb{E}\left[  \left\|V^N\left(x-X_r^{i, N}\right)\sigma(X^{i, N}_r)\Phi^{i, N}_r h_0(X_0^i)\right\|_{\HH^{-1}_2(\T^3)}^{2} \right] \leq C.
\]
The proof of this is exactly the same as the one for Lemma \ref{lem:-2comma2_case}.
\end{proof}

\begin{proof}[Proof of Theorem \ref{thm:compact_2}]
We have:
\[
\begin{aligned}
& g^{N}_{t}(x) - g^{N}_{s}(x) - \int_{s}^{t} A g^N_r dr \\
&- \int_{s}^{t} \left\langle \mu^{N}_{r}, \left( \sigma \cdot \nabla \sigma \right) \cdot \nabla V^{N}(x - \cdot) \right\rangle dr - \int_{s}^{t} \left\langle \mu^{N}_{r}, \left( \left( \nabla \sigma \right) \cdot \nabla \left( \nabla \sigma \right) \right) \cdot \nabla V^{N}(x - \cdot) \right\rangle dr\\
&- \int_{s}^{t} \left\langle \mu^{N}_{r}, u^{N}_{r} \cdot \nabla V^{N}(x - \cdot) \right\rangle dr - \int_{s}^{t} \left\langle \mu^{N}_{r}, V^{N}(x - \cdot) \cdot \nabla u^{N}_{r} \right\rangle dr \\
& - \int_{s}^{t}\left\langle \mu^{N}_{r}, \sigma \cdot \nabla V^{N}(x - \cdot) \right\rangle dW_r - \int_{s}^{t}\left\langle \mu^{N}_{r},  V^{N}(x - \cdot) \cdot \nabla \sigma \right\rangle dW_r\\
& = \frac{\sqrt{2\nu}}{N} \sum\limits_{i = 1}^{N}\int_{s}^{t} \nabla V^{N}(x-X_{r}^{i, N}) \Phi_{r}^{i, N} h_{0}(X_{0}^i) dB_{r}^{i},
\end{aligned}
\]
from which:
\[
\begin{aligned}
& \E \left[ \left\|g^{N}_{t}(x) - g^{N}_{s}(x) \right\|_{\HH^{-2}_2(\T^3)}^{q'}\right] \leq (t-s)^{q'-1} \int_{s}^{t} \E \left[ \left\|A g^N_r dr \right\|_{\HH^{-2}_2(\T^3)}^{q'}\right] \\
&+ (t-s)^{q'-1} \int_{s}^{t} \E \left[ \left\| \left\langle \mu^{N}_{r}, \left( \sigma \cdot \nabla \sigma \right) \cdot \nabla V^{N}(x - \cdot) \right\rangle \right\|_{\HH^{-2}_2(\T^3)}^{q'}\right] dr \\
& + (t-s)^{q'-1} \int_{s}^{t} \E \left[ \left\| \left\langle \mu^{N}_{r}, \left( \left( \nabla \sigma \right) \cdot \nabla \left( \nabla \sigma \right) \right) \cdot \nabla V^{N}(x - \cdot) \right\rangle \right\|_{\HH^{-2}_2(\T^3)}^{q'}\right] dr\\
&+ (t-s)^{q'-1} \int_{s}^{t} \E \left[ \left\| \left\langle \mu^{N}_{r}, u^{N}_{r} \cdot \nabla V^{N}(x - \cdot) \right\rangle \right\|_{\HH^{-2}_2(\T^3)}^{q'}\right] dr \\
& + (t-s)^{q'-1} \int_{s}^{t} \E \left[ 
 \left\| \left\langle \mu^{N}_{r}, V^{N}(x - \cdot) \cdot \nabla u^{N}_{r} \right\rangle \right\|_{\HH^{-2}_2(\T^3)}^{q'}\right] dr \\
& + \E \left[ 
 \left\| \int_{s}^{t} \left\langle \mu^{N}_{r}, \sigma \cdot \nabla V^{N}(x - \cdot) \right\rangle dW_r \right\|_{\HH^{-2}_2(\T^3)}^{q'}\right] + \E \left[ 
 \left\| \int_{s}^{t}\left\langle \mu^{N}_{r},  V^{N}(x - \cdot) \cdot \nabla \sigma \right\rangle dW_r \right\|_{\HH^{-2}_2(\T^3)}^{q'}\right] \\
& = \E \left[ 
 \left\| \frac{\sqrt{2\nu}}{N} \sum\limits_{i = 1}^{N}\int_{s}^{t} \nabla V^{N}(x-X_{r}^{i, N}) \Phi_{r}^{i, N} h_{0}(X_{0}^i) dB_{r}^{i} \right\|_{\HH^{-2}_2(\T^3)}^{q'}\right].
\end{aligned}
\]
Now, from Lemma \ref{lem:-2comma2_case} we are able to say that the first five lines above are bounded by:
\[
C (t-s)^{q' - 1} (t-s) =C (t-s)^{q'}.
\]
From Lemma \ref{lem:-2comma2_case_noise} we can infer that all the terms in the above concerning the stochastic integrals are bounded by:
\[
(t-s)^{q'/2}.
\]
This is enough to conclude, because:
\[
\mathbb{E}\left[\int_0^T \int_0^T \frac{\left\|g_t^N-g_s^N\right\|_{\HH^{-2}_2(\T^3)}^{q^{\prime}}}{|t-s|^{1+q^{\prime} \gamma}} d s d t\right] \leq C \int_0^T \int_0^T \frac{|t-s|^{q'/2}}{|t-s|^{1+q^{\prime} \gamma}} d s d t < \infty.
\]
\end{proof}

\subsection{A representation formula for the Navier-Stokes-alpha model}
\label{repform}

The following formula has been inspired by
Remark 2.5 in \cite{constantinStochasticLagrangianRepresentation2008}. In \cite{drivas_holm}, the authors prove a more general connection with the deterministic Navier Stokes equations that also implies the Theorem \ref{thm:representation_formula}.

Just as above, \((B_{t})_{t \geq 0}\) will be a \(3\)-dimensional Brownian
motion, independent from \((W_{t})_{t \geq 0}\), that is a
\(1\)-dimensional Brownian motion.

Consider the following system and assume it has a solution:
\begin{equation}
\begin{cases}
dX = udt + \sqrt{2\nu} dB + \sigma(X) \circ dW, \\
A_t=X_t^{-1}, \\
v = \mathbb{E}\left[ \left( \nabla^TA \right) u_{0}(A) | W\right], \\
u = \left( \indicator - \alpha^2 \Delta \right)^{-1} v
\end{cases}
\end{equation}
and let \(\omega = \nabla \times v\) be the vorticity. In the above system, we mean that $A_t$ is the inverse of $X_t : \T^3 \to \T^3$ as a diffeomorphism. See Theorem 3.1.2 of \cite{kunita}.

Now consider:
\[
\tilde{\omega} = ((\nabla X) \omega_{0}) (A).\]
We will show in this section that $\tilde{\omega}$ satisfies the following:
\[
d \tilde{\omega} = \left[ - \left( u \cdot \nabla \right) \tilde{\omega} + \Delta \tilde{\omega} + \left( \nabla^{T} u\right) \tilde{\omega} \right] dt - \sqrt{2\nu} \nabla \tilde{\omega} dB - \left(\nabla \tilde{\omega} \right) \sigma  \circ dW + \left( \nabla \sigma \right) \tilde{\omega} \circ dW.
\]

The equation satisfied by \(X\) can also be written with only It\^{o} integrals
as: \[
\begin{cases}
dX = \left(u(X) + \frac{1}{2}(\nabla\sigma) \sigma (X)\right)dt + \sqrt{2\nu} dB + \sigma(X)dW, \\
A=X^{-1}, \\
v = \mathbb{E}\left[ \left( \nabla^TA \right) u_{0}(A) | W\right], \\
u = \left( \indicator - \alpha^2 \Delta \right)^{-1} v.
\end{cases}
\]

\begin{lem}
There exists a process \(\Lambda\) with bounded variation such
that: \[
A_{t} = \Lambda_{t} - \sqrt{2\nu} \int_{0}^{t} \nabla A_{s}dB_{s} - \int_{0}^{t}(\nabla A_{s}) \sigma dW. \]
\end{lem}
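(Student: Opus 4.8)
The plan is to read off the stochastic differential of the inverse flow $A_t = X_t^{-1}$ from the It\^o form of the equation for $X$ by applying the generalized It\^o formula (the It\^o--Wentzell formula for the composition of a semimartingale random field with a semimartingale, as in \cite{kunita}) to the identity $X_t(A_t(y)) \equiv y$. First I would record the It\^o form of the flow,
\[
X_t(x) = x + \int_0^t \left( u(X_s(x)) + \tfrac{1}{2}(\nabla \sigma)\sigma(X_s(x)) \right) ds + \sqrt{2\nu}\,B_t + \int_0^t \sigma(X_s(x))\,dW_s,
\]
and regard $x \mapsto X_t(x)$ as a continuous semimartingale field whose $dB_t$-coefficient is the constant $\sqrt{2\nu}\,\text{Id}$ and whose $dW_s$-coefficient is the field $\sigma(X_s(x))$. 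By the theory of stochastic flows of diffeomorphisms (Theorem 3.1.2 and the inverse-flow results of \cite{kunita}) the map $A_t = X_t^{-1}$ is itself a semimartingale field, so I may write it in the unknown form $dA_t = \Lambda'_t\,dt + P_t\,dB_t + q_t\,dW_t$, with $P_t$ an $\R^{3\times 3}$-valued and $q_t$ an $\R^3$-valued adapted process to be identified.

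Next I would apply the It\^o--Wentzell formula to $G_t := X_t(A_t(y))$. Because $X_t(A_t(y)) = y$ for every $t$, the differential $dG_t$ vanishes identically, and evaluating all field coefficients at $X_t(A_t(y)) = y$ gives the algebraic identity
\[
0 = B_t^{\text{bv}}\,dt + \sqrt{2\nu}\,dB_t + \sigma(y)\,dW_t + (\nabla_x X_t)(A_t)\,dA_t,
\]
where $B_t^{\text{bv}}\,dt$ collects the field drift $( u + \tfrac12(\nabla\sigma)\sigma )(y)$, the second-order term $\tfrac{1}{2}\operatorname{tr}[(\nabla_x^2 X_t)(A_t)\,d\langle A\rangle_t]$, and the covariation term $(\nabla\sigma)(y)(\nabla_x X_t)(A_t)\,d\langle W, A\rangle_t$, each of which is absolutely continuous in $t$. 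Writing $J_t := (\nabla_x X_t)(A_t)$ and matching the $dB_t$ and $dW_t$ martingale parts yields $\sqrt{2\nu}\,\text{Id} + J_t P_t = 0$ and $\sigma(y) + J_t q_t = 0$. Differentiating $X_t(A_t(y)) = y$ in $y$ gives $J_t\,\nabla A_t = \text{Id}$, that is $J_t^{-1} = \nabla A_t$, whence $P_t = -\sqrt{2\nu}\,\nabla A_t$ and $q_t = -(\nabla A_t)\sigma$. Collecting the surviving bounded-variation contributions into $\Lambda_t := A_0 + \int_0^t \Lambda'_s\,ds$ produces exactly the stated decomposition
\[
A_t = \Lambda_t - \sqrt{2\nu}\int_0^t \nabla A_s\,dB_s - \int_0^t (\nabla A_s)\sigma\,dW.
\]

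The main obstacle is the rigorous justification of the It\^o--Wentzell step rather than the algebra: one must verify that $x \mapsto X_t(x)$ is a $C^2$ flow of diffeomorphisms with derivative fields enjoying enough local integrability for the generalized It\^o formula and for the inversion $J_t^{-1} = \nabla A_t$ to be meaningful, and one must compute the joint quadratic variations $\langle W, A\rangle$ and $\langle A\rangle$ consistently with the postulated decomposition of $A_t$. Both of these follow from the flow theory in \cite{kunita} together with the standing smoothness of $\sigma$ and the regularity of $u = (\indicator - \alpha^2\Delta)^{-1}v$, but they are the points where care is required; the independence of $B$ and $W$ (so that $d\langle B,W\rangle = 0$) is what keeps the two martingale integrals decoupled.
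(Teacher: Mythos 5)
Your argument is correct, and it rests on the same key device as the paper---Kunita's generalized It\^{o} (It\^{o}--Wentzell) formula applied to the identity defining the inverse flow---but you run the composition in the opposite direction, which changes the mechanics. The paper expands $A(X_t(x),t)=x$, treating $A$ as the semimartingale field and $X_t(x)$ as the point semimartingale: the $dB$- and $dW$-coefficients of the composed expansion are then already $\sqrt{2\nu}\,\nabla A|_{X_s}$ and $(\nabla A)|_{X_s}\,\sigma$, so the martingale part of the field differential of $A$ is isolated in a single pass, and substituting an arbitrary spatial point for $X_s$ (the paper's ``$X$ is a homeomorphism'' step) finishes the proof with no unknowns to solve for. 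You instead expand $X_t(A_t(y))=y$, treating $X$ as the field and $A_t(y)$ as the semimartingale, postulate $dA_t=\Lambda'_t\,dt+P_t\,dB_t+q_t\,dW_t$, and identify $P_t=-\sqrt{2\nu}\,\nabla A_t$ and $q_t=-(\nabla A_t)\sigma$ by matching martingale parts and inverting $J_t=(\nabla_x X_t)(A_t)$ through $J_t\,\nabla A_t=\text{Id}$. Your direction has the advantage that the field coefficients of $X$ are explicit from its SDE (the constant $\sqrt{2\nu}\,\text{Id}$ and $\sigma(X_s(x))$), so no field differential of the unknown $A$ enters the formula; the price is the extra a priori input that $A$ is a continuous semimartingale field whose martingale part is driven by $(B,W)$---which is indeed supplied by Kunita's inverse-flow results, or can be bypassed by identifying $P_t$ and $q_t$ through joint quadratic covariations with $B$ and $W$, using $d\langle B,W\rangle=0$ as you note---together with uniqueness of the semimartingale decomposition. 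Note that the paper's route is not free of this input either: writing $\int_{t'}^{t} A(X_s,ds)$ already presupposes the semimartingale-field property of $A$, so both proofs lean on the same regularity theory from \cite{kunita} and differ only in which side of the inverse relation carries the unknown.
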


\begin{proof}
Let us apply the generalized It\^{o} formula (see \cite{kunita}). One has:
\[\begin{aligned} 0 & = \int_{t^{\prime}}^t A\left(X_s, d s\right)+\left.\int_{t^{\prime}}^t \nabla A\right|_{X_s, s} d X_s+\left.\frac{1}{2} \int_{t^{\prime}}^t \partial_{i j}^2 A\right|_{X_s, s} d\left\langle X^{i}, X^{j}\right\rangle_s+ \\ & +\left\langle\int_{t^{\prime}}^t \partial_i A\left(X_s, d s\right), X_{t}^{i}-X_{t^{\prime}}^{i}\right\rangle \\ 
&= \int_{t^{\prime}}^t A\left(X_s, d s\right)+\int_{t^{\prime}}^t\left[\left.\nabla A\right|_{X_{s, s}}\left(u + \frac{1}{2}(\nabla\sigma) \sigma \right)+\left.\Phi A\right|_{X_s, s}\right] d s+\left.\sqrt{2 \nu} \int_{t^{\prime}}^t \nabla A\right|_{X_s, s} d B_{s}+ \\
& + \left. \int_{t^{\prime}}^{t} \nabla A\right|_{X_s, s}  \sigma dW + \left\langle\int_{t^{\prime}}^t \partial_i A\left(X_s, d s\right), X_{t}^{i}-X_{t^{\prime}}^{i}\right\rangle.
\end{aligned}
\]
where
\(\Phi = \nu \Delta + \frac{1}{2}\sum\limits_{i,j}\sigma^{i} \sigma^{j}\partial_{i}\partial_{j}\).
The lemma follows because \(X\) is an homeomorphism.
\end{proof}

Now let us prove the following:

\begin{lem}
\label{lem:equation_for_A}
The process \(A\) satisfies the stochastic partial
differential equation \[
d A_{t}+\left(u + \frac{1}{2}(\nabla\sigma) \sigma \right) \cdot \nabla A d t-\Phi A d t+\sqrt{2 \nu} \nabla A d B_{t} + (\nabla A)\sigma dW_{t}= 0.\]
\end{lem}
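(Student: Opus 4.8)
The plan is to read off the finite-variation part of $A$ from the generalized Itô (Itô--Wentzell) expansion already assembled in the proof of the preceding lemma, rather than to run a fresh computation. That expansion writes the identity $A_t(X_t)=A_0(X_0)$ (equivalently $0=d[A(X_s,s)]$) as a sum of four pieces: the intrinsic field differential $A(X_s,ds)$, the first-order transport term $\nabla A|_{X_s}\cdot dX_s$, the second-order Itô term $\tfrac12\,\partial^2_{ij}A|_{X_s}\,d\langle X^i,X^j\rangle_s$, and the cross-variation correction $\sum_i d\langle \partial_i A(X_\cdot,d\cdot),X^i\rangle_s$. The preceding lemma already matched the martingale parts, pinning down the $dB$ and $dW$ coefficients of $A$ as $-\sqrt{2\nu}\,\nabla A$ and $-(\nabla A)\sigma$; these are exactly the stochastic terms appearing in the statement. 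It therefore remains only to identify the drift $\Lambda$.

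First I would substitute the Itô form of the $X$-equation, so that the transport term contributes $\nabla A\cdot\big(u+\tfrac12(\nabla\sigma)\sigma\big)$, the $\tfrac12(\nabla\sigma)\sigma$ piece being the Stratonovich-to-Itô correction of $\sigma(X)\circ dW$. Next I would evaluate the quadratic covariations $d\langle X^i,X^j\rangle=(2\nu\delta_{ij}+\sigma^i\sigma^j)\,dt$, so that the second-order term collapses to $\Phi A\,dt$ with $\Phi=\nu\Delta+\tfrac12\sum_{ij}\sigma^i\sigma^j\partial_i\partial_j$, precisely the operator in the statement. Setting the total finite-variation part of $d[A(X_s,s)]$ to zero then expresses $d\Lambda$ through these pieces; dividing out the composition with $X_s$ (legitimate because $X_s:\T^3\to\T^3$ is a diffeomorphism, so the pointwise identity on $\{X_s(y):y\in\T^3\}$ exhausts the torus) yields a genuine field identity, which I would rearrange into the displayed SPDE.

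The main obstacle is the Itô--Wentzell correction $\sum_i d\langle \partial_i A,X^i\rangle$. Because $A$ is vector-valued and its martingale coefficients $-\sqrt{2\nu}\,\nabla A$ and $-(\nabla A)\sigma$ themselves carry spatial derivatives, this term produces both a second-order contribution, which must be combined consistently with the Itô second-order term above to reproduce $\Phi A$, and a first-order contribution proportional to $((\sigma\cdot\nabla)\sigma)\cdot\nabla A=((\nabla\sigma)\sigma)\cdot\nabla A$, which then interacts with the $\tfrac12(\nabla\sigma)\sigma$ transport correction. Getting the bookkeeping of these $\sigma\sigma^{T}$ and $\nabla\sigma$ terms exactly right---equivalently, keeping the Stratonovich and Itô conventions consistent throughout---is the delicate step, and is what fixes the precise coefficient of the advection-correction term. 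As a cross-check I would re-derive the same equation directly in Stratonovich form, where the Itô--Wentzell formula carries no correction term and $A(X_t,t)=\mathrm{const}$ delivers the drift almost immediately, and then convert back to the Itô formulation required by the statement.
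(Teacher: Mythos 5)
Your proposal is correct and follows essentially the same route as the paper: the paper's proof consists precisely of reusing the generalized It\^{o} expansion assembled in the preceding lemma (whose martingale-part identification fixes the $dB$ and $dW$ coefficients as $-\sqrt{2\nu}\,\nabla A$ and $-(\nabla A)\sigma$) and then evaluating the cross-variation bracket, asserted there to equal $-2\int_{t'}^{t}\Phi A\,ds$, before reading off the drift. If anything you are more careful than the paper, which states that bracket identity in one line without recording the first-order piece $((\sigma\cdot\nabla)\sigma)\cdot\nabla A$ that you correctly flag as the delicate bookkeeping step; your Stratonovich cross-check is a sensible safeguard for exactly that point.
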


\begin{proof}
We can just use the computation
made in the previous lemma and notice that: \[
\left\langle\int_{t^{\prime}}^{t} \partial_{i} A\left(X_{s}, d s\right), X_{t}^{i}-X_{t^{\prime}}^{i}\right\rangle = - 2 \int_{t^{\prime}}^{t} \Phi A ds.
\]
\end{proof}

\begin{lem}
Let \(v = \theta(A)\) where \(\theta\) can also be itself a
semimartingale. Then:
\[
d v_{t}+ \left(u+ \frac{1}{2}(\nabla\sigma) \sigma \right) \cdot \nabla v_{t} d t-\Phi v_{t} d t+\sqrt{2 \nu} \nabla v_t d B_{t}+ \nabla v_{t} \sigma dW_{t}=\left.d \vartheta\right|_{A_t}.\]
\end{lem}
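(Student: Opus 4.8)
The plan is to derive the equation for $v_t=\theta_t(A_t)$ by applying the generalized It\^o (It\^o--Wentzell) formula to the composition of the semimartingale field $\theta_t(\cdot)$ with the $\T^3$-valued semimartingale $t\mapsto A_t(x)$, whose dynamics are supplied by Lemma~\ref{lem:equation_for_A}, and then to show that every term carrying a spatial derivative of $A$ reassembles, through the chain rule, into the transport operator applied to $v$, leaving exactly $d\vartheta|_{A_t}$. The guiding deterministic intuition is that if $A$ solves the first-order transport equation $\partial_t A+u\cdot\nabla A=0$ then $v=\theta(A)$ satisfies $\partial_t v+u\cdot\nabla v=\partial_t\theta|_A$; here the operator is second order and stochastic, so the It\^o corrections must be shown to align with $\Phi$.

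Concretely, for each fixed Eulerian point $x$ the map $t\mapsto A_t(x)$ is a continuous semimartingale in $\T^3$ with martingale coefficients $-\sqrt{2\nu}\,\nabla A$ (against the $3$-dimensional $B$) and $-(\nabla A)\sigma$ (against $W$), while $\theta_t$ is a $C^2$ semimartingale field. Applying the generalized It\^o formula (see \cite{kunita}) pointwise in $x$ gives
\[
dv_t = d\vartheta|_{A_t} + \nabla_y\theta_t(A_t)\cdot dA_t + \frac{1}{2}\sum_{i,j}\partial_{y_i}\partial_{y_j}\theta_t(A_t)\, d\langle A^i, A^j\rangle_t,
\]
where $\nabla_y$ denotes differentiation in the Lagrangian argument of $\theta$, and $d\vartheta|_{A_t}$ collects the intrinsic differential of $\theta$ evaluated at $A_t$ together with the It\^o--Wentzell cross-variation between the martingale parts of $\theta$ and of $A$. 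The bracket of $A$ is read off directly from its martingale coefficients:
\[
d\langle A^i, A^j\rangle_t = \left(2\nu\, \nabla A^i\cdot\nabla A^j + (\nabla A^i\cdot\sigma)(\nabla A^j\cdot\sigma)\right)dt.
\]

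Next I would record the chain-rule identities stemming from $\nabla_x v=(\nabla_y\theta)(A)\,\nabla_x A$ and its second-order analogue. These yield, for the first-order operators, the relations $\left(u+\tfrac{1}{2}(\nabla\sigma)\sigma\right)\cdot\nabla_x v=\nabla_y\theta(A)\cdot\left[\left(u+\tfrac{1}{2}(\nabla\sigma)\sigma\right)\cdot\nabla_x A\right]$, together with $\sqrt{2\nu}\,\nabla_x v=\nabla_y\theta(A)\cdot[\sqrt{2\nu}\,\nabla_x A]$ and $\nabla_x v\,\sigma=\nabla_y\theta(A)\cdot[(\nabla_x A)\sigma]$, and for the second-order operator the crucial identity
\[
\Phi v = \nabla_y\theta(A)\cdot\Phi A + \frac{1}{2}\sum_{i,j}\partial_{y_i}\partial_{y_j}\theta(A)\left(2\nu\,\nabla A^i\cdot\nabla A^j + (\nabla A^i\cdot\sigma)(\nabla A^j\cdot\sigma)\right),
\]
in which the ``extra'' term produced by $\Phi$ differentiating the composition twice is precisely $\tfrac{1}{2}\sum_{i,j}\partial_{y_i}\partial_{y_j}\theta(A)\,d\langle A^i,A^j\rangle_t/dt$. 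Substituting the SDE of Lemma~\ref{lem:equation_for_A} into $\nabla_y\theta(A)\cdot dA_t$ and invoking these identities, the drift collapses to $-(u+\tfrac{1}{2}(\nabla\sigma)\sigma)\cdot\nabla_x v+\Phi v$ minus that same extra term, while the martingale parts collapse to $-\sqrt{2\nu}\,\nabla_x v\,dB-\nabla_x v\,\sigma\,dW$. Adding back the It\^o--Wentzell bracket term cancels the extra term exactly, and rearranging produces the stated identity with right-hand side $d\vartheta|_{A_t}$.

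The main obstacle I expect is the careful bookkeeping of the generalized It\^o formula in this mixed Eulerian/Lagrangian setting: one must treat $t\mapsto A_t(x)$ as the driving semimartingale while the operators $\nabla,\Delta,\Phi$ in the target equation act on the Eulerian variable $x$, and verify that the quadratic-variation corrections generated by the composition coincide term-by-term with the second-order part of $\Phi$. The delicate point is the matching in the displayed $\Phi v$ identity: it closes only because $d\langle A^i,A^j\rangle_t$ carries exactly the coefficient $2\nu$ (from the $\sqrt{2\nu}\,dB$ diffusion) and the quadratic $\sigma$-term (from the $(\nabla A)\sigma\,dW$ diffusion) that appear in $\Phi=\nu\Delta+\tfrac{1}{2}\sum_{k,l}\sigma^k\sigma^l\partial_k\partial_l$, so that the Stratonovich-to-It\^o structure inherited from the original flow is respected throughout.
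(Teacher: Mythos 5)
Your proposal is correct and follows essentially the same route as the paper: both apply Kunita's generalized It\^o formula to $\vartheta_t(A_t)$, substitute the dynamics of $A$ from Lemma \ref{lem:equation_for_A}, and use the chain-rule identity $\Phi v = \nabla\vartheta|_{A}\,\Phi A + \nu\,\partial^2_{ij}\vartheta\,\partial_k A^i\,\partial_k A^j + \tfrac{1}{2}\partial^2_{ij}\vartheta\,[(\nabla A)\sigma]^i[(\nabla A)\sigma]^j$ so that the It\^o quadratic-variation correction merges with $\nabla\vartheta|_{A}\,\Phi A$ into $\Phi v$, while the martingale coefficients $-\sqrt{2\nu}\,\nabla A$ and $-(\nabla A)\sigma$ transform via $\nabla v = \nabla\vartheta|_{A}\,\nabla A$ into $-\sqrt{2\nu}\,\nabla v\,dB$ and $-(\nabla v)\sigma\,dW$. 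Your explicit treatment of the cross-variation between the martingale parts of $\theta$ and $A$ (folded into $d\vartheta|_{A_t}$) is if anything slightly more careful than the paper's write-up, which leaves that term implicit.
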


\begin{proof}
This is an application of the
generalised It\^{o} formula and of previous Lemma \ref{lem:equation_for_A}.
In fact: 
\[
\begin{aligned}
d v_t= & \left. d\vartheta\right|_{A_t}+\left.\nabla \vartheta\right|_{A_t} d A_t+\left.\frac{1}{2} \partial_{i j}^{2} \vartheta\right|_{A_t} d\left\langle A^{i}, A^{j}\right\rangle_t \\ = & \bigg[\left. d\vartheta\right|_{A_t}-\left.\nabla \vartheta\right|_{A_t}\left(\nabla A_t\right) \left(u_{t} + \frac{1}{2}(\nabla\sigma) \sigma \right)dt+\left. \nabla \vartheta\right|_{A_t} \Phi A_{t} dt \\
&+ \left. \nu \partial_{i j}^{2} \vartheta\right|_{A_t} \partial_{k} A_{t}^{i} \partial_{k} A_{t}^{j} + \partial_{i j}^{2} \vartheta \left[ \nabla A \cdot \sigma \right]^{i} \left[ \nabla A \cdot \sigma \right]^{j}\bigg] d t- \\ & \quad-\left.\sqrt{2 \nu} \nabla \vartheta\right|_{A_{t}} \nabla A_{t} d B_{t} - \left.\nabla \vartheta\right|_{A_{t}} \nabla A_{t} \sigma dW_s\\ = & {\left[\left. d\vartheta\right|_{A_t}-\left(u_{t} + \frac{1}{2}(\nabla\sigma) \sigma \right)  \cdot \nabla v_t+\Phi v_t\right] d t-\sqrt{2 \nu} \nabla v_{t} d B_{t} - \left.\nabla \vartheta\right|_{A_{t}} \nabla A_{t} \sigma dW_{s}}.
\end{aligned}
\]
\end{proof}

\begin{lem}
\label{lem:omega_tilde}
If one defines:
\[
\tilde{\omega} = ((\nabla X) \omega_{0}) (A),\]
then it satisfies:
\[
d \tilde{\omega} = \left[ - \left(u + \frac{1}{2}(\nabla\sigma) \sigma \right) \cdot \nabla  \tilde{\omega} + \Phi \tilde{\omega} + \left( \nabla^{T} u\right) \tilde{\omega} \right] dt - \sqrt{2\nu} \nabla \tilde{\omega} dB - \left(\nabla \tilde{\omega} \right) \sigma dW + \left( \nabla \sigma \right) \tilde{\omega} \circ dW.
\]
\end{lem}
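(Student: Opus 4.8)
The plan is to apply the immediately preceding lemma (the transport identity for $v = \vartheta(A)$ with $\vartheta$ an arbitrary semimartingale) to the specific choice $\vartheta_t := (\nabla X_t)\omega_0$, so that $v_t = \vartheta_t(A_t) = \tilde{\omega}_t$. That lemma already isolates the composition as a source term $d\vartheta|_{A_t}$, giving
\[
d\tilde{\omega}_t + \left(u + \tfrac{1}{2}(\nabla\sigma)\sigma\right)\cdot\nabla\tilde{\omega}_t\,dt - \Phi\tilde{\omega}_t\,dt + \sqrt{2\nu}\,\nabla\tilde{\omega}_t\,dB_t + \nabla\tilde{\omega}_t\,\sigma\,dW_t = d\vartheta|_{A_t}.
\]
Thus the only remaining task is to identify the right-hand side, after which the claimed equation follows by rearranging for $d\tilde{\omega}$.

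First I would record the equation satisfied by the Jacobian $\nabla X_t$. Differentiating the Stratonovich flow equation $dX_t = u(X_t)\,dt + \sqrt{2\nu}\circ dB_t + \sigma(X_t)\circ dW_t$ in the initial datum, as in Theorem 3.3.4 of \cite{kunita}, yields
\[
d(\nabla X_t) = (\nabla u)(X_t)\,\nabla X_t\,dt + (\nabla\sigma)(X_t)\,\nabla X_t\circ dW_t,
\]
where the $B$-term drops out because $\sqrt{2\nu}$ is spatially constant. Multiplying on the right by the time-independent factor $\omega_0$, the process $\vartheta_t = (\nabla X_t)\omega_0$ then satisfies
\[
d\vartheta_t = (\nabla u)(X_t)\,\vartheta_t\,dt + (\nabla\sigma)(X_t)\,\vartheta_t\circ dW_t.
\]

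Next I would evaluate this differential along $A_t$. Since $A_t = X_t^{-1}$, i.e.\ $X_t\circ A_t = \mathrm{id}$, every composition of the form $\big(F(X_t)\big)\big|_{A_t}$ collapses to $F$ at the current spatial point; in particular the coefficients $(\nabla u)(X_t)|_{A_t}$ and $(\nabla\sigma)(X_t)|_{A_t}$ become the spatial gradients evaluated at $x$, so that
\[
d\vartheta|_{A_t} = (\nabla^{T} u)\,\tilde{\omega}_t\,dt + (\nabla\sigma)\,\tilde{\omega}_t\circ dW_t.
\]
Substituting into the transport identity above and solving for $d\tilde{\omega}$ produces exactly the stated equation: the terms $\Phi\tilde{\omega}\,dt$, $-\sqrt{2\nu}\,\nabla\tilde{\omega}\,dB$ and $-(\nabla\tilde{\omega})\sigma\,dW$ are carried over unchanged from the lemma, while the stretching drift $(\nabla^{T} u)\tilde{\omega}\,dt$ and the multiplicative noise $(\nabla\sigma)\tilde{\omega}\circ dW$ come from $d\vartheta|_{A_t}$.

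The main obstacle is the rigorous justification of the composition $d\vartheta|_{A_t}$ through the generalized It\^o (It\^o--Wentzell) formula. Both $\vartheta_t$ and $A_t$ carry a $W$-martingale part, so a naïve It\^o expansion of $\vartheta_t(A_t)$ generates a cross-variation contribution $d\langle\nabla\vartheta(\cdot),A\rangle$; the point to verify is that keeping the noise in Stratonovich form automatically absorbs this term (the ordinary chain rule being valid for Stratonovich integrals), which is precisely why $(\nabla\sigma)\tilde{\omega}\circ dW$ appears with a circle whereas the transport noise $-(\nabla\tilde{\omega})\sigma\,dW$ inherited from the preceding lemma is an It\^o integral. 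Reconciling these two conventions, confirming that no spurious It\^o-correction drift survives, and tracking the transpose in the velocity-gradient factor (the difference between $\nabla u$ and $\nabla^{T} u$ induced by the linearization of the flow) constitute the delicate bookkeeping on which the argument rests.
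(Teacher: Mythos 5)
Your proposal is correct and is essentially the paper's own proof: the paper likewise applies the preceding lemma with $\vartheta = (\nabla X)\omega_0$ and identifies $d\vartheta|_{A_t} = \left.\nabla dX\right|_{A_t}\omega_0(A)$ via the variational (linearized flow) equation from Kunita, yielding $(\nabla u)\tilde{\omega}\,dt + (\nabla\sigma)\tilde{\omega}\circ dW$ as the source term. Note that the transpose discrepancy you flag as ``delicate bookkeeping'' is present in the paper itself — its proof ends with $(\nabla u)\tilde{\omega}$ while the lemma statement reads $(\nabla^{T}u)\tilde{\omega}$ — so your account matches the paper's argument in full, including this unresolved convention issue.
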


\begin{proof}
Notice first that \(\nabla X\) is differentiable in time. We set
\(\vartheta=(\nabla X) \omega_0, \tilde{\omega}=\vartheta \circ A\) and
apply the previous Lemma to obtain

\[
\begin{aligned}
& d \tilde{\omega}+\left(u + \frac{1}{2}(\nabla\sigma) \sigma\right)\cdot \nabla \tilde{\omega} d t-\nu \Phi \tilde{\omega} d t+\sqrt{2 \nu} \nabla \tilde{\omega} d B + \nabla \tilde{\omega} \sigma dW =\left.\nabla d X\right|_{A_t} \omega_0(A) \\
& =\left.(\nabla u)(\nabla X)\right|_A \omega_0(A) d t + \left.(\nabla \sigma)(\nabla X)\right|_A \omega_0(A) \circ dW\\
& =(\nabla u) \tilde{\omega} d t + (\nabla \sigma)\tilde{\omega} \circ dW
\end{aligned}
\]
\end{proof}

\begin{thm}
\label{thm:representation_formula}
The following function:
\[
\omega=\mathbb{E}\left[ \tilde{\omega} | W \right],
\]
satisfies the following SPDE:
\[
d \omega + \left(u + \frac{1}{2}(\nabla\sigma) \sigma\right) \cdot \nabla \omega dt -\nu \Phi \omega d t + \left( \nabla \omega \right) \sigma dW = (\nabla u) \omega dt + (\nabla \sigma)\omega \circ dW.
\]
\end{thm}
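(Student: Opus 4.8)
The plan is to obtain the equation for $\omega = \mathbb{E}[\tilde{\omega}\,|\,W]$ by taking the conditional expectation $\mathbb{E}[\,\cdot\,|\,W]$ of the integral form of the evolution equation for $\tilde{\omega}$ established in Lemma \ref{lem:omega_tilde}, namely
\[
\tilde{\omega}_t = \tilde{\omega}_0 + \int_0^t \left[ -\left(u + \tfrac{1}{2}(\nabla\sigma)\sigma\right)\cdot\nabla\tilde{\omega}_s + \Phi\tilde{\omega}_s + (\nabla^T u)\tilde{\omega}_s\right] ds - \sqrt{2\nu}\int_0^t \nabla\tilde{\omega}_s\, dB_s - \int_0^t (\nabla\tilde{\omega}_s)\sigma\, dW_s + \int_0^t (\nabla\sigma)\tilde{\omega}_s \circ dW_s.
\]
The two structural facts driving the argument are that $B$ and $W$ are independent and that the velocity $u = (\indicator - \alpha^2\Delta)^{-1}v$ with $v = \mathbb{E}[(\nabla^T A)u_0(A)\,|\,W]$ is, by construction, $W$-measurable (indeed $\mathcal{F}^W_s$-adapted). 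Because $\tilde{\omega}_t$ is adapted to the joint filtration and $B \perp W$, conditioning on the entire path of $W$ coincides with conditioning on $\mathcal{F}^W_t$, which lets me treat $\mathbb{E}[\,\cdot\,|\,W]$ as an $\mathcal{F}^W$-conditional expectation throughout.

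For the Lebesgue (drift) integral I would invoke the conditional Fubini theorem to move $\mathbb{E}[\,\cdot\,|\,W]$ inside $\int_0^t(\cdots)\,ds$, and then use that $u$ is $W$-measurable while $\sigma,\nabla\sigma$ are deterministic coefficients, so these factors come out of the conditional expectation. Since the spatial operators $\nabla$ and $\Phi$ act in the space variable and commute with $\mathbb{E}[\,\cdot\,|\,W]$ (under the integrability afforded by the smoothness of the flow $X$), each drift term is sent to its counterpart with $\tilde{\omega}$ replaced by $\omega$: the transport term $\left(u + \tfrac12(\nabla\sigma)\sigma\right)\cdot\nabla\tilde{\omega}_s \mapsto \left(u + \tfrac12(\nabla\sigma)\sigma\right)\cdot\nabla\omega_s$, the stretching term $(\nabla^T u)\tilde{\omega}_s \mapsto (\nabla^T u)\omega_s$, and the second-order term $\Phi\tilde{\omega}_s \mapsto \Phi\omega_s$. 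This reproduces the drift of the claimed SPDE.

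The two stochastic integrals require the genuinely probabilistic input. For the $B$-integral, conditioning on $W$ and applying the tower property with the independence $B\perp W$, the increments of $B$ have vanishing conditional mean given $\mathcal{F}^W_t$ and the past of the integrand, so
\[
\mathbb{E}\!\left[\int_0^t \nabla\tilde{\omega}_s\, dB_s \,\Big|\, W\right] = 0,
\]
eliminating the $dB$ martingale. For the integrals against $W$ I would first rewrite the Stratonovich term in It\^o form, $\int_0^t (\nabla\sigma)\tilde{\omega}_s\circ dW_s = \int_0^t (\nabla\sigma)\tilde{\omega}_s\, dW_s + \tfrac12\int_0^t d[(\nabla\sigma)\tilde{\omega},W]_s$, so that the quadratic-variation correction is a $dt$-term handled by the conditional Fubini argument above, and then apply the interchange formula
\[
\mathbb{E}\!\left[\int_0^t H_s\, dW_s \,\Big|\, W\right] = \int_0^t \mathbb{E}[H_s\,|\,\mathcal{F}^W_s]\, dW_s,
\]
valid because each increment of $W$ is $\mathcal{F}^W$-measurable and the adapted integrand depends on $B$ only through its past, which is independent of $W$. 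This turns $H_s = -(\nabla\tilde{\omega}_s)\sigma$ and $H_s = (\nabla\sigma)\tilde{\omega}_s$ into $-(\nabla\omega_s)\sigma$ and $(\nabla\sigma)\omega_s$; reassembling the It\^o correction then restores the Stratonovich form $(\nabla\sigma)\omega\circ dW$ of the statement.

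The main obstacle I anticipate is the rigorous justification of the interchange formula for the $W$-integral together with the conditional vanishing of the $B$-integral: both rest on a careful measure-theoretic argument (a discretization of the integral, the $\mathcal{F}^W$-measurability of the increments, and the independence $B\perp W$ used through the tower property), and they require enough integrability of $\nabla\tilde{\omega}$ and $\tilde{\omega}$ — supplied by the regularity of the stochastic flow $X$ and by Lemma \ref{lem:omega_tilde} — for the conditional Burkholder--Davis--Gundy inequality and conditional Fubini theorem to apply. This is precisely the type of conditional stochastic-calculus statement available in \cite{kunita} (and in the spirit of \cite{constantinStochasticLagrangianRepresentation2008}), which I would cite to close the argument.
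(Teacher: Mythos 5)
Your proposal is correct and takes essentially the same route as the paper: integrate the equation from Lemma \ref{lem:omega_tilde} and pass to the conditional expectation given $W$, using the $W$-measurability of $u$ to pull it out and the interchange formula $\mathbb{E}\left[\int_0^t H_s\, dW_s \,\middle|\, W\right] = \int_0^t \mathbb{E}\left[H_s \,\middle|\, W\right] dW_s$, with the $dB$-integral vanishing by the independence of $B$ and $W$. Your extra care with the Stratonovich--It\^{o} conversion, the conditional Fubini step, and the integrability needed for these interchanges simply makes explicit details the paper's short proof leaves implicit.
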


\begin{proof}
To prove the theorem, it is enough to use Lemma \ref{lem:omega_tilde}, integrate the equation and pass to the conditional expectation with respect to the filtration generated by $W$.

The only things to be careful about are:
\begin{itemize}
    \item since \(u = \left(\indicator - \alpha^2 \Delta\right)^{-1}\mathbb{E}\left[ \left( \nabla^T A \right) u_{0}(A)|W\right]\), it is \(W\)
measurable, and therefore is goes out from the conditional expectation.
    \item The following holds:
\[\mathbb{E}\left[\int_{0}^{t} X_{s}dW_{s} \bigg| W\right] =\int_{0}^{t}\mathbb{E}\left[ X_{s}|W\right]dW_{s}\]
\end{itemize}
\end{proof}

\begin{remark}
Let us consider a test function $\phi : \T^3 \to \R^3$ and $\omega$ solution to equation \eqref{eq:NS_alpha_model}. Then:
\begin{equation}
\label{eq:eur}
\begin{aligned}
    \int_{\T^3} \omega(y) \phi(y) dy & = \E \left[ \int_{\T^3} (\nabla X) \omega_0 (X^{-1}(y)) \phi(y) dy \bigg|W\right] = \E \left[ \int_{\T^3} (\nabla X) \omega_0 (x) \phi(X(x)) |\det (\nabla X)|(x)dx \bigg|W\right] \\
    & = \E \left[ \int_{\T^3} (\nabla X) \omega_0 (x) \phi(X(x)) dx \bigg|W\right] = \int_{\T^3} \E \left[ (\nabla X) \omega_0 (x) \phi(X(x)) \bigg|W\right] dx
\end{aligned}
\end{equation}

Let $(\eta^i)_{i =1}^N$ be a lattice in $\T^3$ such that the volume of a single cube is $1/N$. Heuristically, denoting $X^{i, N} = X(\eta^i)$, and $\Phi^{i, N} =  (\nabla X)(\eta^i)$ one has:
\[
\begin{aligned}
\int_{\T^3} \E \left[ (\nabla X) \omega_0 (x) \phi(X(x)) \bigg|W\right] dx & \approx \sum\limits_{i} \E \left[ (\nabla X)(\eta^i) \left[ \omega_0(\eta^i) \right] \phi(X(\eta^i)) \bigg|W \right] \cdot \frac{1}{N}\\
& = \frac{1}{N}\sum\limits_{i} \E \left[ \Phi^{i, N} \left[ \omega_0(\eta^i) \right] \phi(X^{i, N}) \bigg|W \right].
\end{aligned}
\]
We can see here the similarity with the definition of $\mu^N$ in equation \eqref{eq:action_mu_test}.
\end{remark}

\bibliographystyle{abbrvnat}
\bibliography{bibliography}
\end{document}